\documentclass{amsart}
\usepackage{amssymb}
\usepackage[backref=page]{hyperref}
\DeclareMathOperator{\dist}{dist}
\DeclareMathOperator{\supp}{supp}
\numberwithin{equation}{section}
\newtheorem{theorem}{Theorem}[section]
\newtheorem{corollary}[theorem]{Corollary}

\newtheorem{proposition}[theorem]{Proposition}
\newtheorem{definition}[theorem]{Definition}

\title[The divergence set for the wave equation in higher dimensions]{The divergence set for the wave equation in higher dimensions}

\author{Xiumin Du}
\address{Northwestern University}
\email{xdu@northwestern.edu}

\author{Terence L.~J.~Harris}
\address{
The University of Queensland
}
\email{terry.harris@uq.edu.au}

\author{Jianhui Li}
\address{
University of Jyv\"askyl\"a
}
\email{jianhui.j.li@jyu.fi}

\begin{document} 
\begin{abstract} It is shown that if $u$ solves the wave equation in $\mathbb{R}^{4+1}$ with initial data $u(\cdot,0) = u_0(\cdot) \in H^s$ and $\partial_tu(\cdot,0) = u_1(\cdot ) \in H^{s-1}$, where $0.5 < s \leq 0.55$, then $u(x,t) \to u_0(x)$ and $\partial_tu(x,t) \to u_1(x)$ pointwise as $t \to 0$, for all $x$ outside an exceptional set of Hausdorff dimension at most $6-4s$. In a very small range of $s$, this verifies a conjecture of Barcel\'o, Bennett, Carbery, and Rogers. More generally,  a partial improvement to the exceptional set bound in $\mathbb{R}^{n+1}$ is obtained for $n \geq 4$ and $1/2 < s < n/4$. 
\end{abstract}
\maketitle

\section{Introduction}

Let $u$ be the solution to the wave equation in $\mathbb{R}^{n+1}$, 
\[ \Delta u - u_{tt} = 0, \]
with initial data $u_0(x) = u(x,0)$ and $u_1(x) = u_t(x,0)$. For a function $f$ on $\mathbb{R}^n$, and $s \in \mathbb{R}$, let
\[ \|f\|_{H^s} = \left( \int_{\mathbb{R}^n} \left\lvert \widehat{f}(\xi)\right\rvert^2 \left(1+|\xi|^2\right)^{s} \,d\xi\right)^{1/2}. \]It was conjectured in \cite[Conjecture 1.1]{hamkolee} (attributed to Barcel\'o, Bennett, Carbery, and Rogers \cite{BBCR}) that for $n \geq 3$, if $u_0 \in H^s$ and $u_1 \in H^{s-1}$ with $1/2 < s < n/2$, then $u(x,t) \to u_0(x)$ and $\partial_t u (x,t) \to u_1(x)$ for all $x$ outside a set of Hausdorff dimension at most 
\[ \begin{cases} n+2 - 4s & 1/2 < s \leq 1 \\
n-2s & 1 < s \leq n/2, \end{cases} \]
with this bound known to be the best possible (if true).
Equivalently, the conjecture is that
\begin{equation} \label{divergencesup} \mathcal{D}_n(s) = \begin{cases} n+2 - 4s & 1/2 < s \leq 1 \\
n-2s & 1 < s \leq n/2, \end{cases} \end{equation}
where $\mathcal{D}_n(s)$ is 
the supremum of the Hausdorff dimension of the set of $x$ for which $(u(x,t), \partial_t u(x,t)) \not \to (u_0(x), u_1(x))$ as $t \to 0$, over all $(u_0,u_1) \in H^s \times  H^{s-1}$. The question in \cite{BBCR,hamkolee} is posed in terms of the convergence of the wave operator $e^{it\sqrt{-\Delta}} f(x)$ to $f(x)$ as $t\to 0$ when $f \in H^s$, but the solution $u$ to the wave equation can be written as a linear combination of wave operators applied to $u_0$ and the multiplication of $u_1$ on the Fourier side by $|\xi|^{-1}$, so the two formulations are equivalent. 

To explain the range $s > 1/2$ in \eqref{divergencesup}, it is known that there are examples with $u_0 \in H^{1/2}$ and $u_1 = 0$, such that $u(x,t)$ does not converge to $u_0(x)$ as $t \to 0$ on a  positive Lebesgue measure set of $x \in \mathbb{R}^n$; Walther \cite{walther} showed that for any $n \geq 1$ the maximal inequality
\[ \left(\int_{\mathbb{R}^n} \sup_{0 < t < 1} \left\lvert e^{it\sqrt{-\Delta}} f(x) \right\rvert^2 \, dx\right)^{1/2} \leq C\|f\|_{H^{1/2}}, \] 
fails, and even the weak-type $H^{1/2} \to L^{2,\infty}$ version was shown to fail in \cite[Lemma~ A2]{hamkolee}, so by Stein's maximal principle this implies the failure of a.e.~pointwise convergence. Here
\[ e^{it\sqrt{-\Delta}} f(x) = \int_{\mathbb{R}^n} e^{2\pi i \left( \langle x, \xi \rangle + t |\xi| \right)} \widehat{f}(\xi) \, d\xi. \] Cowling \cite{cowling} showed that pointwise a.e.~convergence \emph{does} hold if $s> 1/2$. It is easy to see that pointwise convergence holds for \emph{every} $x$ if $s > n/2$, which explains the restriction $s \leq n/2$ in \eqref{divergencesup}. 

The sharp value of $\mathcal{D}_2(s)$ was found for all $s$ by Barcel\'o, Bennett, Carbery, and Rogers \cite{BBCR}, and the conjectured value of $\mathcal{D}_3(s)$ in \eqref{divergencesup} was confirmed for all $s$ by Ham, Ko, and Lee \cite{hamkolee}.  For $n \geq 4$, the previous best known bounds from \cite{hamkolee} can be summarised as
\begin{equation} \label{knownbound}  \begin{cases} n+2 - 4s & \frac{1}{2} < s \leq 1 \\
n-2s & 1 < s \leq \frac{n}{2} \end{cases}\leq \mathcal{D}_n(s)\leq \begin{cases} n - \frac{(2s-1)(n-1)}{n-2} & \frac{1}{2} < s < \frac{n}{4} \\
\frac{3n+1}{2} - 4s & \frac{n}{4} \leq s < \frac{n+1}{4} \\
n-2s  & \frac{n+1}{4} \leq s \leq \frac{n}{2}. \end{cases} \end{equation}
 Therefore, prior to this work the conjecture was open in the range $1/2 < s < (n+1)/4$ when $n \geq 4$. 
 
 The following theorem verifies the conjecture when $n=4$ in the very small range $1/2 < s \leq 11/20$, and improves the upper bound in \eqref{knownbound} for all $n \geq 4$ when $1/2 < s < n/4$, with the exception of the point $s=\frac{2n-3}{2(n-1)}$ where the two upper bounds match. 
\begin{theorem} \label{maintheorem} Let $n\geq 4$ and $1/2 < s <n/4$. Then
\[ \mathcal{D}_n(s) \leq
\begin{cases}
n - \frac{(2s-1)(n-2)}{n - 3} & \frac{1}{2} < s \leq \frac{4n-5}{2(3n-2)} \\
n - \frac{(2s-1)(2n)(n - 1) +1}{2n^2 - 4n + 1} &  \frac{4n-5}{2(3n-2)} < s \leq  \frac{2n-3}{2(n - 1)}  \\
n - \frac{(2s-1)n(n - 1)(n - 3) - 1}{n^3 - 5n^2 + 6n - 1} & \frac{2n-3}{2(n - 1)} < s \leq \frac{n^3-3n^2+4n-3}{4n(n-2)} \\
n - \frac{(2s-1)(2n) + 1}{2(n - 1)} & \frac{n^3-3n^2+4n-3}{4n(n-2)} < s < \frac{n}{4}.
\end{cases}
\]
When $n=4$, equality holds in the first part, i.e. $D_4(s) = 6-4s$ for $1/2 < s \leq 11/20$. 
\end{theorem}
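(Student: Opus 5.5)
The plan is the standard route: pass from divergence dimension to a fractal maximal estimate, then prove that estimate by localization, a broad/narrow split, and induction on scales. The final lower bound for $n=4$ comes for free from \eqref{knownbound}.

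\textbf{Reduction to a maximal estimate.} I would reduce $\mathcal{D}_n(s)\le n-\gamma$ to a fractal maximal estimate for the half-wave operator. Since $u$ is a linear combination of $e^{\pm it\sqrt{-\Delta}}$ applied to $u_0$ and to $|\xi|^{-1}\widehat{u_1}$, it suffices to treat $e^{it\sqrt{-\Delta}}f$ with $f\in H^s$.

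Following the Frostman-measure argument of \cite{BBCR}, I would show that for every $\alpha$-dimensional measure $\mu$ on the unit ball (with $\mu(B(x,r))\le r^\alpha$) and every $f$ with $\operatorname{supp}\widehat f\subset\{|\xi|\sim R\}$,
\[ \Big\|\sup_{0<t<1}|e^{it\sqrt{-\Delta}}f|\Big\|_{L^1(d\mu)}\lesssim R^{\frac{n-\alpha}{2}\cdot\theta+\epsilon}\|f\|_2 \]
for an appropriate exponent $\theta$. This is equivalently a bound $R^{\beta(\alpha)}$ that beats $R^{s}$ whenever $\alpha>n-\gamma(s)$. Littlewood--Paley summation and a standard capacity argument then give $\mathcal{D}_n(s)\le n-\gamma$.

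\textbf{Localization and wave packets.} By finite speed of propagation, it suffices to take $t\in(0,R^{-1})$ after rescaling, or equivalently to work at time scale $\sim R$ in a ball of radius $R$ after parabolic-type rescaling. I would decompose $f$ into wave packets along light-cone directions: plates of dimensions $R\times R^{1/2}\times\dots$ tangent to the cone.

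\textbf{Broad/narrow decomposition.} This is the key step. I would split into a broad part and a narrow part, as in the Du--Zhang approach to Schrödinger maximal estimates, adapted to the cone.
\begin{itemize}
\item On the broad part I would use a multilinear cone restriction estimate (Bourgain--Guth style) combined with the Frostman condition through a weighted refined Strichartz / decoupling argument.
\item On the narrow part I would use $\ell^2$ decoupling for the cone (Bourgain--Demeter) in lower-dimensional subspaces, together with induction on scales.
\end{itemize}
The four regimes in the theorem should arise from optimizing between these estimates, which gives several competing bounds:
\begin{itemize}
\item an $L^2$-type bound that uses $\alpha$ through a Mattila-style averaged decay estimate for the cone measure;
\item multilinear bounds of different orders, $k=2,\dots$, producing the rational exponents with denominators like $n-3$ and $2n^2-4n+1$;
\item the trivial bound.
\end{itemize}
Taking the minimum and solving for the threshold $\alpha$ in terms of $s$ should produce the piecewise formula. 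The breakpoints $\frac{4n-5}{2(3n-2)}$, $\frac{2n-3}{2(n-1)}$, etc. are exactly where consecutive bounds coincide.

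\textbf{Expected obstacle.} The main obstacle will be the broad-case estimate. One must exploit the fractal dimension of $\mu$ in the multilinear setting for the cone, whose curvature degenerates in one direction. That degeneracy is what costs the factors $n-2$ and $n-3$, compared with $n-1$ for the paraboloid. I would first try a weighted multilinear estimate: bound $\int\prod|Ef_j|^{1/k}\,d\mu$ using Kakeya-type incidence counting of plates against $\alpha$-dimensional sets, and bound the narrow part by cone decoupling in dimension $k$.

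\textbf{Sharpness for $n=4$.} Finally, for $n=4$ the first case reads $4-\frac{2(2s-1)}{1}=6-4s$. This matches the lower bound $n+2-4s$ in \eqref{knownbound}, valid for $1/2<s\le 1$, so equality holds for $1/2<s\le 11/20$, since $\frac{4n-5}{2(3n-2)}=\frac{11}{20}$.
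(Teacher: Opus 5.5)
Your last step is correct and matches the paper's argument for equality when $n=4$: the first case becomes $6-4s$, which agrees with the lower bound $n+2-4s$ in \eqref{knownbound}, and $\frac{4n-5}{2(3n-2)}=\frac{11}{20}$.

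The upper bound, however, is not established. You name a framework (linearise, split broad/narrow, decouple, induct on scales), but two things are missing. First, you never formulate the linearised spacetime estimate the induction runs on: unions of unit balls in $B_{n+1}(0,R)$ with at most $\gamma r^{\alpha}$ in any $r$-ball, which give $\mathcal{D}_n(g(\alpha))\le\alpha$ with $g(\alpha)=\frac{n-\alpha}{2}+s(\alpha)$. Second, you state no estimate with explicit exponents. The sentence ``taking the minimum \ldots should produce the piecewise formula'' is where all the content lies.

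This is not a formality. The previous bound $n-\frac{(2s-1)(n-1)}{n-2}$ in \eqref{knownbound}, equivalent to $s(\alpha)=\frac{\alpha-1}{2(n-1)}$, already came from weighted broad/narrow arguments of this general type. So everything hinges on which specific estimates are used.

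Several of your heuristics also point the wrong way:
\begin{itemize}
\item Broad and narrow must both close, so $s(\alpha)$ is the \emph{maximum} of the two exponents, not a minimum of competing bounds.
\item The Mattila-type averaged decay for the cone is equivalent to the target estimate ($\beta(\alpha,\Gamma^n)=n-2s_n(\alpha)$), so it cannot serve as an independent input.
\item The denominators $2n^2-4n+1$ and $n^3-5n^2+6n-1$ come from inverting the piecewise affine $g$, not from multilinear orders $k=2,3,\dots$.
\item Localisation to $t\in(0,R^{-1})$ is a Schr\"odinger fact; no such reduction holds for the wave equation. At frequency $R$, the interval $t\in[0,1]$ rescales isotropically to $[0,R]$.
\end{itemize}

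The missing ideas are as follows.

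\textbf{Narrow case.} The dichotomy must be at the $(n-1)$-linear level. On $(n-2)$-narrow $K^2$-balls one uses narrow cone decoupling at $p=\frac{2(n-2)}{n-4}$ together with Lorentz rescaling. This induction closes iff $s\ge\frac{\alpha-2}{2(n-2)}$. Inverting $g(\alpha)=\frac{n-\alpha}{2}+\frac{\alpha-2}{2(n-2)}$ gives exactly the first case of the theorem, which is the sharp one for $n=4$.

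\textbf{Broad case.} The price is a weighted $(n-1)$-broad estimate $\|Ef\|_{BL^{q}_{n-1,A}(Y)}\lesssim\gamma^{O(1)}R^{\epsilon}\|f\|_2$ with $\alpha(\frac12-\frac1q)$ no larger than the other three formulas. Nothing in your outline (a multilinear estimate plus a Kakeya-type count) is shown to reach these exponents. The paper proves this by polynomial partitioning with induction on the dimension $m\in\{n-1,n,n+1\}$ of the variety; the new input is in the tangential sub-case.
\begin{itemize}
\item H\"older's inequality for broad norms bounds the $BL^{r_m}$ norm by the $BL^{r_{m-1}}$ norm, to which the $(m-1)$-dimensional case applies. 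The cost is $|Y\cap B_j\cap\mathcal{N}_{R^{1/2+\delta_m}}(Z)|^{1/r_m-1/r_{m-1}}$.
\item Wongkew's theorem and the ball condition bound this measure by $\gamma\min\{R^{(m-1+\alpha)/2},R^{\alpha}\}$, up to $D^{O(1)}R^{O(\delta)}$. Matching exponents determines $a_n$ (for $\alpha\ge n-1$) and $r_{n+1}$.
\item For $\alpha<n-1$, the case $m=n$ instead uses an $(n-1)$-narrow, $(n-1)$-linear refined Strichartz estimate (narrow refined decoupling plus multilinear Kakeya). It is applied at the exponent $p_{n-1}$ with $\frac12-\frac1{p_{n-1}}=\frac{1}{(n-1)^2}$, chosen so the power of $N$ cancels, and then interpolated with the $(n-1)$-variety case.
\item The transverse sub-case forces $a_n\le\frac{1}{4n}$.
\end{itemize}
These give the remaining three cases after the same inversion. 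Without these ingredients, or explicit substitutes whose exponents you actually verify, your proposal does not prove any of the four bounds.
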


 Using the Kolmogorov–Seliverstov–Plessner linearisation argument, Ham, Ko, and Lee \cite{hamkolee} showed that upper bounds for $\mathcal{D}_n(s)$ follow from weighted $L^2$ inequalities for the wave equation, which is how Theorem~\ref{maintheorem} is proved. The  reduction to a weighted $L^2$ inequality is explained in Section~\ref{weightreduction}. Some ideas in the proof of the weighted $L^2$ inequality are explained in the next subsection. 

\subsection{Comments on the proof} The initial setup to the proof of the weighted $L^2$ inequality here follows that of Du and Zhang \cite{duzhang} for the Schrödinger equation. There, they divide the integration domain into ``broad'' and ``narrow'' points. In the broad case they use the $n$-linear Fourier restriction inequality in $\mathbb{R}^n$, and in the narrow case they use induction on scales and decoupling for an $(n-2)$-dimensional paraboloid. 

Here, the dichotomy is done at the $(n-1)$-linear level in $\mathbb{R}^{n+1}$, so that the broad case uses a weighted $(n-1)$-linear cone restriction inequality, and  the narrow case uses decoupling on an $(n-3)$-dimensional cone. When $n=4$, a similar dichotomy occurs in \cite{ouwang}, where Ou and Wang use the (unweighted) trilinear restriction inequality for the cone in $\mathbb{R}^5$ to prove the sharp result for the cone Fourier restriction problem in $\mathbb{R}^5$. The weighted trilinear cone restriction inequality here is proved via polynomial partitioning, following \cite{guth,ouwang}.  The polynomial partitioning method separates into cellular, tangential, and transverse cases. The improvement over \cite{harris2018} comes from modifying the proof of the tangential case in a way that makes different use of the weight, which gives an inequality which is better than the narrow case for a range of dimensions of the weight. 

To be a bit more precise,  for the purposes of induction, the weighted $(n-1)$-broad cone restriction inequality is proved under the more general assumption that the input function concentrates on wave packets close to an $m$-dimensional algebraic variety in $\mathbb{R}^{n+1}$, when $m \in \{n-1,n,n+1\}$, and the proof uses induction on $m$. In the unweighted approach from \cite{guth,ouwang}, when $m \in \{n,n+1\}$ the tangential case interpolates between the lower dimensional case and a simple $L^2 \to L^2$ inequality from Plancherel. The approach in \cite{harris2018} used the weight by replacing this $L^2$ inequality with a weighted $L^2$ inequality. Here, when $n-1\leq \alpha < n$, instead of interpolating with an $L^2$ inequality,  Hölder's inequality is used to lift the exponent from the $m$-dimensional case to the exponent for the $(m-1)$-dimensional case. The loss from Hölder's inequality is captured by a bound on the number of unit balls satisfying a non-concentration condition which also lie inside a large neighbourhood of an $(m-1)$-dimensional algebraic variety. When $m=n$ the bound uses Wongkew's theorem \cite{wongkew}, but when $m=n+1$ the bound uses only the non-concentration condition. Wongkew's theorem was used in a similar way in \cite{duliwangzhang}. 

When $\alpha < n-1$, the proof replaces a couple of steps with (multilinear) refined decoupling, but since the sharp bound is not obtained in this range for any $\alpha$ or $n$,  the explanation of this part will be left to the proof. 

\subsection*{Acknowledgements} The first author is partially supported by the NSF grant DMS-2237349.

\section{Reduction to weighted \texorpdfstring{$L^2$}{L2} inequality} \label{weightreduction}

Assume that $n \geq 2$, and let 
\[ Ef(x,t) = \int_{\mathbb{R}^n} e^{2\pi i (\langle x, \xi \rangle + t|\xi|) } f(\xi) \, d\xi, \]
so that $E\widehat{f}(x,t) = e^{it\sqrt{-\Delta}} f(x)$. It is standard that an upper bound $\mathcal{D}_n(s) \leq \alpha$ would follow from an $L^2$ maximal inequality\footnote{It is well known that apart from $\epsilon$ losses in $s$ or $\alpha$, there is no difference in requiring an $L^2$ norm on the left-hand side of \eqref{fractal} instead of an $L^1$ norm, for probability measures $\mu$. Similarly, for a probability measure $\mu$, $c_{\alpha}(\mu)$ in \eqref{fractal} is interchangeable with the energy $I_{\alpha}(\mu)$ up to $\epsilon$ losses in $s$ or $\alpha$.}
 \begin{equation} \label{fractal} \left(\int \sup_{t \in [0,1]} | E \widehat{f} (x,t) |^2 \, d\mu(x)\right)^{1/2}  \lesssim \|f\|_{H^s} c_{\alpha}(\mu)^{1/2}, \end{equation}
for any Borel measure $\mu$ on $B_n(0,1)$, where $c_{\alpha}(\mu) = \sup_{x \in \mathbb{R}^n, r >0} \frac{ \mu(B(x,r))}{r^{\alpha}}$. To prove \eqref{fractal} for a given $\alpha$, by the triangle inequality, it is sufficient (up to replacing $s$ by $s+\epsilon$ in \eqref{fractal}) to prove that
 \begin{equation} \label{fractal2} \left(\int \sup_{t \in [0,1]} | E f (x,t) |^2 \, d\mu(x)\right)^{1/2}  \lesssim R^s \|f\|_{L^2(A(R))} c_{\alpha}(\mu)^{1/2}, \end{equation}
for all smooth $f$ supported in $A(R)$ and all Borel measures $\mu$ on $B_n(0,1)$, where $A(R) = B(0,R) \setminus B(0,R/2)$. For a given $f$, if $t(x)$ is a Borel function attaining the sup above, and $\nu$ is the pushforward of $\mu$ under $x \mapsto (x,t(x))$, it is easy to see that $c_{\alpha}(\nu) \leq c_{\alpha}(\mu)$. Therefore, a more general inequality which would imply \eqref{fractal2} is
 \begin{equation} \label{fractal3} \left(\int  | E f (x,t) |^2 \, d\mu(x,t) \right)^{1/2}  \lesssim R^s \|f\|_{L^2(A(R))} c_{\alpha}(\mu)^{1/2}, \end{equation}
 for all smooth $f$ supported in $A(R)$ and all Borel measures $\mu$ on $B_{n+1}(0,1)$. 
 This step is usually referred to as the ``Kolmogorov-Seliverstov-Plessner linearisation argument''. 

By discretisation and pigeonholing, to prove \eqref{fractal3} it is sufficient, up to replacing $s$ with $s+\epsilon$ in \eqref{fractal3}, to show that for any $\gamma \geq 1$, for any smooth $f$ supported in $A(R)$,
\begin{equation} \label{discretised} \left( \int_X |Ef(x,t)|^2 \, dx \, dt \right)^{1/2} \leq \gamma^{1/2} R^{s- \frac{n+1}{2} + \frac{\alpha}{2}} \|f\|_{L^2(A(R))}, \end{equation}
for any disjoint union $X$ of balls of radius $R^{-1}$ in $B_{n+1}(0,1)$ with the property that for any $R^{-1} \leq r \leq 1$ there are at most $\gamma (Rr)^{\alpha}$ balls intersecting any ball of radius $r$. By rescaling and a change of variables, the inequality \eqref{discretised} (over all $X$$, \gamma$, $R$, and $f$) is equivalent to
\begin{equation} \label{rescaleddiscretised} \left( \int_X |Ef(x,t)|^2 \, dx \, dt\right)^{1/2} \leq \gamma^{1/2} R^{s+ \frac{\alpha-n}{2}} \|f\|_{L^2(A(1))}. \end{equation}
for any $\gamma \geq 1$, for any smooth $f$ supported in $A(1)$, and for any disjoint union $X$ of unit balls in $B_{n+1}(0,R)$ with the property that for any $1 \leq r \leq R$, there are at most $\gamma r^{\alpha}$ many unit balls intersecting any $r$-ball. The above discussion yields the following:
\begin{proposition}\label{hamkoleeprop} If $\widetilde{s} \in \mathbb{R}$ and $\alpha \geq 0$ is such that 
\[ \left( \int_X |Ef(x,t)|^2 \, dx \, dt\right)^{1/2} \leq C(\widetilde{s}) \gamma^{1/2} R^{\widetilde{s}} \|f\|_{L^2(A(1))}, \] 
for all smooth $f$ supported in $A(1)$, for any $R,\gamma \geq 1$ and any disjoint union $X$ of unit balls in $B_{n+1}(0,R)$ with the property that for any $1 \leq r \leq R$ there are at most $\gamma r^{\alpha}$ many unit balls intersecting any $r$-ball, then for $s = \widetilde{s} + \frac{n-\alpha}{2}$, 
\[ \mathcal{D}_n(s) \leq \alpha. \]\end{proposition}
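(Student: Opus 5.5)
The plan is to run the chain of reductions described just before the proposition, in reverse. Fix $\widetilde{s}$ and $\alpha$ as in the hypothesis, and set $s = \widetilde{s} + \frac{n-\alpha}{2}$.

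\emph{Step 1: undo the rescaling.} Take $f$ supported in $A(R)$, write $f(\xi) = g(\xi/R)$, and change variables $(x,t) \mapsto (x/R, t/R)$. A union $X$ of $R^{-1}$-balls in $B_{n+1}(0,1)$ with at most $\gamma (Rr)^{\alpha}$ balls meeting any $r$-ball becomes a union of unit balls in $B_{n+1}(0,R)$ with at most $\gamma r^{\alpha}$ balls meeting any $r$-ball. Tracking the Jacobians ($R^{n}$ on the frequency side, $R^{-(n+1)}$ on the spatial side, and $\|f\|_2 = R^{n/2}\|g\|_2$), the hypothesis gives
\[ \Bigl(\int_X |Ef|^2\Bigr)^{1/2} \lesssim \gamma^{1/2} R^{\widetilde{s} - \frac{n+1}{2} + \frac{n}{2}}\,\|f\|_2 . \]
Since $\widetilde{s} - \frac12 = s - \frac{n+1}{2} + \frac{\alpha}{2}$, this is exactly \eqref{discretised}.

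\emph{Step 2: pass from discretised sets to measures.} Let $\mu$ be a measure on $B_{n+1}(0,1)$ with $c_\alpha(\mu) \le 1$, and $f$ supported in $A(R)$. By the uncertainty principle, $|Ef|$ is essentially constant at scale $R^{-1}$. More precisely, $|Ef| \le |Ef| * \phi_R$ pointwise after a standard modification, where $\phi_R$ is an $L^1$-normalised bump at scale $R^{-1}$ with rapidly decaying tails. Hence
\[ \int |Ef|^2\, d\mu \lesssim \sum_Q \mu(Q)\, R^{n+1}\int_{\widetilde{Q}} |Ef|^2 , \]
summed over $R^{-1}$-cubes $Q$ and their enlargements $\widetilde{Q}$. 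Next, dyadically pigeonhole the cubes according to $\mu(Q) \sim 2^{-k}$, keeping $O(\log R)$ classes; cubes with $\mu(Q) < R^{-100n}$ are negligible. In the class $\mu(Q) \sim 2^{-k}$, each $r$-ball meets at most $\lesssim 2^{k} r^{\alpha}$ cubes, so this class is admissible with $\gamma = 2^{k} R^{-\alpha}$ when that quantity is $\ge 1$, and with $\gamma = 1$ otherwise. Apply \eqref{discretised} to each class and multiply by $2^{-k}R^{n+1}$. The powers of $R$ and $2^k$ cancel (using $\mu(Q) \lesssim R^{-\alpha}$), giving
\[ \int|Ef|^2\,d\mu \lesssim (\log R)\, R^{2s}\|f\|_2^2 . \]
The logarithm is absorbed by replacing $s$ with $s+\epsilon$, and the tail terms are handled by rapid decay. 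This yields \eqref{fractal3}.

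\emph{Step 3: linearise and sum over frequencies.} Given $\mu$ on $B_n(0,1)$, choose a Borel $t(x)$ nearly attaining the supremum, and push $\mu$ forward under $x \mapsto (x,t(x))$. The image $\nu$ satisfies $c_\alpha(\nu) \le c_\alpha(\mu)$, because balls pull back into balls of the same radius. So \eqref{fractal3} gives \eqref{fractal2}. Now decompose $\widehat{f}$ Littlewood--Paley into annuli $A(2^j)$ plus a low-frequency piece. Bound the sum by the triangle inequality, using $2^{j(s+\epsilon)}\|\widehat f\|_{L^2(A(2^j))} \lesssim 2^{-j\epsilon}\|f\|_{H^{s+2\epsilon}}$, which gives \eqref{fractal} with $s+2\epsilon$.

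\emph{Step 4: from the maximal estimate to the divergence set.} Write $u$ as a combination of $e^{\pm it\sqrt{-\Delta}}$ applied to $u_0$ and to $|\nabla|^{-1}u_1$, handling low frequencies trivially, with $\partial_t u$ treated the same way. Approximate the data in $H^{s}$ by Schwartz functions, for which convergence is everywhere. The maximal estimate, applied to the difference, then controls the $\mu$-measure of the set where $\limsup|u-u_0|>\lambda$. If that divergence set had $\mathcal{H}^{\beta}$-positive measure for some $\beta>\alpha$, Frostman's lemma would give a measure $\mu$ on it with $c_\beta(\mu) < \infty$, and hence $c_\alpha$ finite after localisation, contradicting the estimate. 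Since $\epsilon$ is arbitrary, the $\epsilon$-losses only shift $\alpha$ slightly, so $\mathcal{D}_n(s) \le \alpha$.

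The main obstacle is Step 2: getting the pigeonholing and the uncertainty-principle localisation right so that each dyadic class really satisfies the $\gamma r^{\alpha}$ condition with $\gamma$ and $\mu(Q)$ balancing exactly, and so that the Schwartz tails are controlled uniformly.
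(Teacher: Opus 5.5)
Your argument is the paper's own: the paper proves the proposition by the chain of reductions from \eqref{fractal} down to \eqref{rescaleddiscretised}, and you run that chain backwards with the details filled in. Steps 1--3 are fine. The rescaling gives exactly $R^{\widetilde{s}-1/2}=R^{s-\frac{n+1}{2}+\frac{\alpha}{2}}$. In Step 2 the class $\mu(Q)\sim 2^{-k}$ contributes $2^{-k}R^{n+1}\cdot 2^{k}R^{-\alpha}\cdot R^{2s-(n+1)+\alpha}=R^{2s}$, so the only loss there is the number of classes.

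The gap is the last sentence of Step 4. The paper's remark that the losses ``can be absorbed by $\alpha$'' is no more detailed than yours. All of your losses are in $s$, not in $\alpha$. You end with \eqref{fractal} at level $s+2\epsilon$ and dimension $\alpha$, which yields $\mathcal{D}_n(s+2\epsilon)\le\alpha$, i.e.\ $\mathcal{D}_n(s')\le\alpha$ for every $s'>s$, but not for $s'=s$.

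To move the loss onto $\alpha$, you would need a frequency-$R$ bound at level exactly $s$ for your Frostman measures of dimension $\beta>\alpha$. That bound would have to carry a power gain $R^{-c(\beta-\alpha)}$ to pay for the $\log R$ and the dyadic sum. The hypothesis at a single $\alpha$ does not obviously supply this, because the natural conversion is exactly neutral. A set obeying the $\gamma r^{\beta}$ condition obeys the $\gamma R^{\beta-\alpha}r^{\alpha}$ condition, so the hypothesis gives $\widetilde{s}+\frac{\beta-\alpha}{2}$ at dimension $\beta$. But $\widetilde{s}+\frac{\beta-\alpha}{2}+\frac{n-\beta}{2}=s$ again, so there is no gain.

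You can repair this in one of two ways. Either conclude only $\mathcal{D}_n(s')\le\alpha$ for all $s'>s$. Or assume the hypothesis for a family of $\alpha$ along which $s$ is strictly decreasing, and let $\alpha'\downarrow\alpha$. The second is how the paper actually removes the $\epsilon$ in the proof that Corollary~\ref{strongercorollary} implies Theorem~\ref{maintheorem}, using the strict monotonicity of $g$.

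Separately, ``$\partial_t u$ treated the same way'' does not work. $\partial_t u$ contains $\cos(t\sqrt{-\Delta})u_1$ with $u_1\in H^{s-1}$, which is one derivative below what an $H^s$ maximal estimate controls. So your argument, like the paper's, only handles $u(x,t)\to u_0(x)$, which is the wave-operator formulation.
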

The $\epsilon$ losses in the deductions above can be absorbed by $\alpha$, since to show $\mathcal{D}_n(s) \leq \alpha$ it suffices to show that $\mathcal{D}_n(s) \leq \alpha+\epsilon$ for any $\epsilon >0$. By Proposition~\ref{hamkoleeprop}, Theorem~\ref{maintheorem} will be a consequence of the following:
\begin{theorem}  \label{L2estimate} If $n \geq 4$ and $(n+1)/2 < \alpha < n$, then for any $\epsilon >0$, 
\[ \left( \int_X |Ef(x,t)|^2 \, dx \, dt\right)^{1/2} \leq C_{\epsilon} \gamma^{\frac{1}{n-2}} R^{s + \epsilon} \|f\|_{L^2(A(1))}, \]
where
\[ s = \begin{cases}  
\frac{2\alpha-1}{4 n} & \frac{n+1}{2} < \alpha \leq   \frac{n^2-5}{2(n-2)}
\\
\frac{\alpha(n^2-3n+1) - (n^2-2n-1)}{2 n(n-1)(n-3)} & \frac{n^2-5}{2(n-2)} < \alpha < n-1
\\
\frac{ \alpha(2n-1)-(n+1)}{4n(n-1)} & n-1 \leq \alpha \leq  \frac{3n^2-3n+2}{3n-2} 
\\
\frac{ \alpha-2 }{2(n-2)} &  \frac{3n^2-3n+2}{3n-2}  < \alpha < n. \end{cases}   \]
for all smooth $f$ supported in $A(1)$, for any $R \geq 1$ and $\gamma \geq 1$, and any disjoint union $X$ of unit balls in $B_{n+1}(0,R)$ with the property that for any $1 \leq r \leq R$ there are at most $\gamma r^{\alpha}$ many unit balls intersecting any $r$-ball. \end{theorem}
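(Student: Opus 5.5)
The plan is to follow the Du--Zhang broad/narrow scheme, as sketched in the comments on the proof, at the $(n-1)$-linear level in $\mathbb{R}^{n+1}$. Decompose $A(1)$ into angular sectors $\tau$ of width $K^{-1}$, with $K = R^{\epsilon^{10}}$, and write $f = \sum_\tau f_\tau$. On each unit ball $B \subseteq X$ we separate two cases. In the \emph{narrow} case, the significant sectors $\tau$ (those with $\|Ef_\tau\|_{L^2(B)}$ comparable to the maximum) have normals lying near an $(n-2)$-dimensional subspace, so they lie in a neighbourhood of an $(n-3)$-dimensional cone. In the \emph{broad} case, there exist $n-1$ significant sectors whose normals are quantitatively transverse. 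Accordingly,
\[ \int_X |Ef|^2 \lesssim K^{O(1)} \int_X \prod_{j=1}^{n-1} |Ef_{\tau_j}|^{\frac{2}{n-1}} + \sum_{B \text{ narrow}} \int_B \Bigl|\sum_{\tau \in \text{narrow}} Ef_\tau \Bigr|^2 . \]

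For the narrow part, we apply $\ell^2$ decoupling for the $(n-3)$-dimensional cone (Bourgain--Demeter) at scale $K$. Then we rescale each $K^{-1}$-sector by Lorentz rescaling, which maps a $K^{-1}$-plate of the cone to the full cone at scale $R/K^2$. The rescaled set $X$ still satisfies a non-concentration hypothesis of dimension $\alpha$, with $\gamma$ changed by a factor $K^{O(1)}$. This lets us close an induction on $R$, provided the decoupling constant $K^{(n-3)/2\cdot(\ldots)+\epsilon}$ is beaten by the gain $K^{-2s}$. This requirement is what produces the constraint that $s$ be at least the narrow exponent; the last case, $s = \frac{\alpha-2}{2(n-2)}$, should be exactly the regime where the narrow case dominates. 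The factor $\gamma^{1/(n-2)}$, rather than $\gamma^{1/2}$, comes from running the broad estimate in $L^{p}$ with $p = 2(n-1)/(n-2)$ and then using Hölder with the counting bound $|X \cap B_r| \le \gamma r^\alpha$.

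For the broad part, we prove a weighted $(n-1)$-broad cone restriction estimate of the form
\[ \|Ef\|_{BL^p_{n-1}(X)} \lesssim \gamma^{\frac{1}{n-2}\cdot(\ldots)} R^{\beta+\epsilon} \|f\|_2 \]
by polynomial partitioning, following \cite{guth,ouwang}. The statement is generalised to functions concentrated on wave packets tangent to an $m$-dimensional variety, $m \in \{n-1,n,n+1\}$, and we induct on $m$ and $R$. The cellular case is handled by induction as usual, and the transverse case by the equidistribution and transversality counting of Guth. The base case $m = n-1$ uses the $(n-1)$-linear $L^2$-based estimate, which is essentially trivial at the multilinear level with the weight entering via $\gamma r^\alpha$.

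The tangential case is where the weight is used differently. For $n-1 \le \alpha < n$, instead of interpolating with a Plancherel (or weighted $L^2$) bound, we apply Hölder to raise the exponent from $p_m$ to $p_{m-1}$. The resulting loss is the number of unit balls of $X$ in an $R^{1/2+\delta}$-neighbourhood of an $(m-1)$-dimensional variety inside a ball. For $m = n$ we bound this by Wongkew's theorem combined with the $\gamma r^\alpha$ condition at scale $R^{1/2}$. For $m = n+1$ we use only the non-concentration condition. For $\alpha < n-1$, we instead replace the tangential and cellular steps by multilinear refined decoupling for wave packets tangent to the variety. Optimising the exponents in each case and balancing against the narrow estimate gives the four piecewise formulas; the breakpoints $\frac{n^2-5}{2(n-2)}$ and $\frac{3n^2-3n+2}{3n-2}$ are where adjacent bounds coincide.

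The main obstacle is the tangential case. We need to obtain a Wongkew-type count of $\alpha$-dimensional unit balls near a low-dimensional variety that is sharp enough, and then track the numerology so that the Hölder lift improves on \cite{harris2018}. I would first try to verify the $n = 4$, $m = n$ case explicitly to calibrate the exponents.
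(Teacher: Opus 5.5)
\emph{Verdict: the broad part of your plan matches the paper, but the narrow case as you set it up does not close; the missing idea is the refined $L^p$ induction.}

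Your broad part follows the paper's route:
\begin{itemize}
\item a weighted $(n-1)$-broad estimate proved by polynomial partitioning, with induction on $m\in\{n-1,n,n+1\}$;
\item for $\alpha\ge n-1$, the tangential case by a H\"older lift plus Wongkew;
\item for $\alpha<n-1$ and $m=n$, the tangential case via the narrow multilinear refined Strichartz estimate (Theorem~\ref{multilinearstrichartz}), interpolated with the $m=n-1$ bound.
\end{itemize}

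\textbf{The gap.} It lies in the narrow case, and more fundamentally in the statement you induct on. You set up the narrow term as an $L^2$ integral over unit balls. Decoupling into $K^{-1}$-sectors is not available on unit balls; it needs $K^2$-balls. On $K^2$-balls at $L^2$ it is only local orthogonality, and that cannot carry the sparsity of $X$ through the Lorentz rescaling. Each ellipsoid $S$ that rescales to a $\widetilde K^2$-ball meets some number $\eta$ of the $K^2$-balls of $Y$. The rescaled non-concentration constant is then only $\gamma(\Box)\lesssim K^{1+\alpha}\gamma\eta^{-1}$. Bounding $\int_{Y\cap S}|Ef_\tau|^2$ by $\int_S|Ef_\tau|^2$ throws away exactly what should pay for this. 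At $L^2$ the rescaling Jacobian is $K$, and the best available bound is $\gamma(\Box)\lesssim K^{1+\alpha}\gamma$. With weight $\gamma^{1/(n-2)}$, such an induction therefore closes only if $2s\ge 1+\frac{1+\alpha}{n-2}$. That is worse than every exponent in the theorem, all of which are below $\frac12$. Also, $\ell^2$ decoupling at the critical exponent costs only $K^{\epsilon}$. The real loss is the H\"older factor over the boxes meeting each ball, which your sketch does not account for.

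\textbf{The missing idea.} It is the Du--Zhang refinement, Proposition~\ref{inductionestimate}. Let $p=\frac{2(n-2)}{n-4}$, the critical $\ell^2$-decoupling exponent of the $(n-3)$-dimensional cone ($p=\infty$ when $n=4$). Let $Y$ be a union of $M$ balls of radius $K^2$ on which $\|Ef\|_{L^p(Q)}$ is dyadically constant. Then
\[ \|Ef\|_{L^p(Y)}\le C_\epsilon\Bigl(\frac{\gamma}{M}\Bigr)^{\frac12-\frac1p}R^{s+\epsilon}\|f\|_2 . \]

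\begin{itemize}
\item \emph{Narrow case.} Let $\mu$ be the number of sets $Y_\Box$ meeting a given ball, and $\widetilde M$ the number of rescaled balls in each $Y_\Box$. The H\"older loss $\mu^{\frac12-\frac1p}$ and the $\eta^{-1}$ in $\gamma(\Box)$ are both cancelled by the incidence bound $M\mu\lesssim|\mathbb B|\widetilde M\eta$. The induction then closes exactly when $s\ge\frac{\alpha-2}{2(n-2)}$.
\item \emph{Broad case.} Dyadic constancy lets you reverse H\"older: up to $K^{O(1)}$, $\|Ef\|_{L^p(Y)}\lesssim M^{\frac1p-\frac1q}\|Ef\|_{BL^q_{n-1,A}(Y)}$ with $q=r_{n+1}$. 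Then $M\lesssim\gamma R^\alpha$ produces $R^{\alpha(\frac12-\frac1{r_{n+1}})}$. The exponent $r_{n+1}$ depends on $\alpha$ and is chosen so that the tangential lift gives $a_{n+1}=0$. This is where the first three formulas for $s$ come from.
\item \emph{Deducing the theorem.} Theorem~\ref{L2estimate} follows by pigeonholing and H\"older from $L^2$ to $L^p$. The factor $M^{\frac12-\frac1p}$ cancels, leaving $\gamma^{\frac12-\frac1p}=\gamma^{\frac1{n-2}}$.
\end{itemize}

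So the $\gamma$-power comes from the narrow decoupling exponent. It does not come from $p=\frac{2(n-1)}{n-2}$, which would give $\frac12-\frac1p=\frac1{2(n-1)}$.

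\textbf{Minor points.}
\begin{itemize}
\item The level $m=n-1$ is not a separate base estimate. It sits inside the same induction, and its tangential sub-case is negligible because tangent wave packets are narrow.
\item For $\alpha<n-1$ the cellular step is unchanged; only the tangential step changes.
\end{itemize}
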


Let $s_n(\alpha)$ be the infimum over all $s$ for which \eqref{fractal3} holds. It is known that $\beta(\alpha, \Gamma^n) = n-2s_n(\alpha)$, where $\beta(\alpha, \Gamma^n)$ is the sup over all $\beta$ for which
\[ \int_{\Gamma^n} \left\lvert \widehat{\mu}(R \xi) \right\rvert^2 \, d\sigma(\xi) \leq C_{\beta} R^{-\beta} \mu(\mathbb{R}^{n+1}) c_{\alpha}(\mu), \]
holds for all Borel $\mu$ on $B_{n+1}(0,1)$ and $R \geq 1$, where $\sigma$ is the $n$-dimensional surface measure on the cone $\Gamma^n$ (equivalently the pushforward of Lebesgue measure on $\mathbb{R}^n$ under $\xi \mapsto (\xi, |\xi|)$). The inequality $\beta(\alpha, \Gamma^n) \geq n-2s_n(\alpha)$ holds by duality, Plancherel, and Cauchy-Schwarz, and the opposite inequality $\beta(\alpha, \Gamma^n) \leq n-2s_n(\alpha)$ follows by reversing this argument; the technique that reverses Cauchy-Schwarz is from \cite[Appendix C]{BBCR} (see also \cite{rogers}). Theorem~\ref{L2estimate} has the following corollary, which strictly supersedes the lower bound for $\beta(\alpha, \Gamma^n)$ from \cite{harris2018} when $\alpha \neq n-1$ and matches it when $\alpha=n-1$.  
\begin{corollary} \label{strongercorollary} If $n\geq 4$ and $(n+1)/2 < \alpha < n$, then 
\[ s_n(\alpha) \leq  \begin{cases}  
\frac{n-\alpha}{2}  + \frac{2\alpha-1}{4 n} & \frac{n+1}{2} < \alpha \leq   \frac{n^2-5}{2(n-2)}
\\
\frac{n-\alpha}{2}  + \frac{\alpha(n^2-3n+1) - (n^2-2n-1)}{2 n(n-1)(n-3)} & \frac{n^2-5}{2(n-2)} < \alpha < n-1
\\
\frac{n-\alpha}{2}  + \frac{ \alpha(2n-1)-(n+1)}{4n(n-1)} & n-1 \leq \alpha \leq  \frac{3n^2-3n+2}{3n-2} 
\\
\frac{n-\alpha}{2}  + \frac{ \alpha-2 }{2(n-2)} &  \frac{3n^2-3n+2}{3n-2}  < \alpha < n. \end{cases}   \]
 Consequently,
\[ \beta(\alpha, \Gamma^n ) \geq 
\begin{cases}  
\alpha- \frac{2\alpha-1}{2n} & \frac{n+1}{2} < \alpha \leq   \frac{n^2-5}{2(n-2)}
\\
\alpha- \frac{\alpha(n^2-3n+1) - (n^2-2n-1)}{ n(n-1)(n-3)} & \frac{n^2-5}{2(n-2)} < \alpha < n-1
\\
\alpha-\frac{ \alpha(2n-1)-(n+1)}{2n(n-1)} & n-1 \leq \alpha \leq  \frac{3n^2-3n+2}{3n-2} 
\\
\alpha- \frac{ \alpha-2 }{n-2} &  \frac{3n^2-3n+2}{3n-2}  < \alpha < n. \end{cases}
\]

When $n=4$ and $19/5 \leq \alpha < 4$, equality holds in the last case, i.e.~$s_4(\alpha) = \frac{6-\alpha}{4}$, and $\beta(\alpha, \Gamma^4) = \frac{\alpha+2}{2}$. \end{corollary}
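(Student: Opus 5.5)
The corollary should follow from Theorem~\ref{L2estimate} by undoing the discretisation and rescaling of Section~\ref{weightreduction}, then invoking the known relation $\beta(\alpha,\Gamma^n) = n - 2s_n(\alpha)$. The sharpness statement for $n=4$ should come from combining the upper bound with a known lower bound (counterexample).

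\textbf{Upper bound for $s_n(\alpha)$.} Fix $\alpha$ and let $\widetilde s$ be the exponent from Theorem~\ref{L2estimate}. Theorem~\ref{L2estimate} gives \eqref{rescaleddiscretised} with $R^{s+\frac{\alpha-n}{2}}$ replaced by $R^{\widetilde s+\epsilon}$ and $\gamma^{1/2}$ replaced by $\gamma^{1/(n-2)}$. This is the same inequality with $s = \widetilde s + \frac{n-\alpha}{2} + \epsilon$, up to the power of $\gamma$.

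\begin{enumerate}
\item Scale back to get \eqref{discretised} for this $s$.
\item To pass from \eqref{discretised} to \eqref{fractal3}, decompose $\mu$ at scale $R^{-1}$ into balls, pigeonhole dyadically in the mass of each ball, and for each dyadic class form the union $X$ of the corresponding balls. Then $\gamma$ is controlled by $c_\alpha(\mu)$ divided by the mass level.
\item Sum the $O(\log R)$ pieces.
\end{enumerate}

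The only point needing care is the power of $\gamma$. Since $n \ge 4$, we have $\frac{1}{n-2} \le \frac12$ and $\gamma \ge 1$, so $\gamma^{1/(n-2)} \le \gamma^{1/2}$ and the required form holds. The $\epsilon$ and logarithmic losses are absorbed because $s_n(\alpha)$ is defined as an infimum. This gives $s_n(\alpha) \le \frac{n-\alpha}{2} + \widetilde s$, which is the displayed bound. The lower bound for $\beta(\alpha,\Gamma^n)$ then follows from $\beta = n - 2s_n$ by arithmetic: $n - (n-\alpha) - 2\widetilde s = \alpha - 2\widetilde s$, which matches each case.

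\textbf{Equality for $n=4$, $19/5 \le \alpha < 4$.} For $n=4$,
\[
\frac{3n^2-3n+2}{3n-2} = \frac{38}{10} = \frac{19}{5},
\]
so the last case of the theorem applies on this range. The upper bound is
\[
s_4(\alpha) \le \frac{4-\alpha}{2} + \frac{\alpha-2}{4} = \frac{6-\alpha}{4}.
\]
The endpoint $\alpha = 19/5$ falls in the third case, but the two formulas agree there, so the bound holds on the whole closed range.

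For the reverse inequality $s_4(\alpha) \ge \frac{6-\alpha}{4}$, use the known lower bound $\mathcal{D}_n(s) \ge n+2-4s$ for $\frac12 < s \le 1$ from \eqref{knownbound}. Its counterexample can be run at the level of \eqref{fractal3}. Alternatively, quote the known upper bound $\beta(\alpha,\Gamma^n) \le \frac{\alpha+2}{2}$ from the cone Fourier-dimension examples (a Knapp-type example adapted to fractal measures, as in \cite{harris2018}). Either way, this yields $\beta(\alpha,\Gamma^4) = \frac{\alpha+2}{2}$, and equivalently $s_4(\alpha) = \frac{4-\beta}{2} = \frac{6-\alpha}{4}$.

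The main obstacle is this sharpness direction. It requires a counterexample at the level of measures in $\mathbb{R}^{n+1}$ rather than merely the divergence-set bound. I would first try to cite the upper bound on $\beta(\alpha,\Gamma^n)$ from the literature. Failing that, I would construct $\mu$ as a lattice-like $\alpha$-dimensional measure concentrated along a light ray, so that it is adapted to a Knapp wave packet and forces $\int |\widehat\mu(R\xi)|^2 \, d\sigma \gtrsim R^{-(\alpha+2)/2}$.
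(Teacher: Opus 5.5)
\textbf{Verdict.} Your proposal is correct. The upper bound is proved the same way as in the paper. For the $n=4$ sharpness you offer two routes, and the first is easier than you think.

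\textbf{Upper bound.} You undo the chain $\eqref{rescaleddiscretised}\to\eqref{discretised}\to\eqref{fractal3}$ and absorb $\gamma^{1/(n-2)}\le\gamma^{1/2}$ using $n\ge 4$. You then read off $\beta(\alpha,\Gamma^n)\ge\alpha-2\widetilde s$ from $\beta=n-2s_n$. This is the paper's argument. Your arithmetic is also right: the four cases, $\frac{3n^2-3n+2}{3n-2}=\frac{19}{5}$ at $n=4$, agreement of the third and fourth formulas at $\alpha=19/5$, and the values $\frac{6-\alpha}{4}$ and $\frac{\alpha+2}{2}$.

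\textbf{Equality for $n=4$.} The paper's proof does not argue this separately. The matching lower bound comes from the Cho--Ham--Lee examples \cite{chohamlee}, which show \eqref{sconj} would be sharp. That is your second, Knapp-type alternative, and it works. For example, take a single wave packet and place an $\alpha$-dimensional family of $\sim R^{(1+\alpha)/2}$ unit balls inside its $R^{1/2}\times\dots\times R^{1/2}\times 1\times R$ plank. This gives $s_n(\alpha)\ge\frac{n+2-\alpha}{4}$. Two corrections to how you stated it:
\begin{itemize}
\item For general $n$ this yields $\beta(\alpha,\Gamma^n)\le\frac{n-2+\alpha}{2}$. It equals $\frac{\alpha+2}{2}$ only when $n=4$; for larger $n$ the bound you wrote would contradict the corollary's own lower bound.
\item The attribution should be to \cite{chohamlee}, not \cite{harris2018}.
\end{itemize}

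\textbf{Your first alternative needs no measure-level counterexample.} Section~2 shows that \eqref{fractal3} at $(s-\epsilon,\alpha)$ implies \eqref{fractal} at $(s,\alpha)$, and hence $\mathcal{D}_n(s)\le\alpha$. Now suppose $s_4(\alpha)<\frac{6-\alpha}{4}$ for some $\alpha\in[19/5,4)$.
\begin{itemize}
\item Pick $s\in\bigl(\max\{s_4(\alpha),\tfrac12\},\tfrac{6-\alpha}{4}\bigr)$, then $\epsilon$ small enough that $s-\epsilon>s_4(\alpha)$.
\item Then \eqref{fractal3} holds at $s-\epsilon$, so $\mathcal{D}_4(s)\le\alpha<6-4s$.
\item This contradicts the lower bound $\mathcal{D}_4(s)\ge 6-4s$ from \eqref{knownbound}, which applies because $s\in(1/2,0.55]$.
\end{itemize}
So the ``main obstacle'' you name is not an obstacle.

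The direction that is genuinely open is the reverse one: passing from failure of \eqref{fractal3} or \eqref{fractal} to large divergence sets, i.e.\ Stein's maximal principle in the fractal setting. You do not need it. The contrapositive route through \eqref{knownbound} is the cheapest. Citing \cite{chohamlee} is the most direct and is what the paper relies on.
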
 
\begin{proof} By the explanation leading from \eqref{fractal3} to \eqref{discretised} to \eqref{rescaleddiscretised}, 
\[ s_n(\alpha) \leq s + \frac{n-\alpha}{2}, \]
where the $s$ is from Theorem~\ref{L2estimate}.  This yields the upper bound on $s_n(\alpha)$. The formula $\beta(\alpha, \Gamma^n) = n-2s_n(\alpha)$ then yields the lower bound on $\beta(\alpha, \Gamma^n)$. \end{proof}
\begin{sloppypar}

\begin{proof}[Proof  that Corollary~\ref{strongercorollary} implies Theorem~\ref{maintheorem}]
By Corollary~\ref{strongercorollary} and the discussion leading from \eqref{fractal} to \eqref{fractal2} to \eqref{fractal3}, for $(n+1)/2 < \alpha < n$, for any $\epsilon >0$,
\[\mathcal{D}_n( g(\alpha)+\epsilon ) \leq \alpha, \]
where $g(\alpha)$ is the upper bound for $s_n(\alpha)$ from Corollary~\ref{strongercorollary}, extended continuously to the endpoints. 
Since \mbox{$g: [(n+1)/2, n] \to [1/2,n/4]$} is a strictly decreasing function of $\alpha$, it follows that 
\begin{equation} \label{divbound} \mathcal{D}_n( g(\alpha) ) \leq \alpha \qquad (n+1)/2 < \alpha < n.\end{equation}
Since $g$ is invertible, this yields
\[ \mathcal{D}_n( s ) \leq g^{-1}(s), \qquad 1/2 < s < n/4. \]
It remains to show that $f$ is the inverse to $g$, where $f$ is the  the right-hand side of Theorem~\ref{maintheorem} extended continuously to the endpoints.  Since both $f$ and $g$ are continuous and piecewise affine, to check that $f$ is the inverse to $g$, it suffices (for example) to show that $g(\alpha_i) = s_i$ and that $f(s_i) = \alpha_i$ for all $i \in \{1,2,3,4,5\}$, where 
\[ (\alpha_1, \alpha_2, \alpha_3, \alpha_4, \alpha_5) = \left( \frac{n+1}{2}, \frac{n^2-5}{2(n-2)}, n-1,\frac{3n^2-3n+2}{3n-2}, n \right), \]
and 
\[ \left( s_1, s_2, s_3, s_4, s_5\right) = \left( \frac{n}{4}, \frac{n^3-3n^2+4n-3}{4n(n-2)}, \frac{2n-3}{2(n-1)} ,\frac{4n-5}{2(3n-2)},  \frac{1}{2} \right). \]
This is a straightforward computation, so it is omitted. 
\end{proof}\end{sloppypar}

The sharp values of $s_n(\alpha)$ (and therefore $\beta(\alpha, \Gamma^n)$) are known for all $\alpha$ when $n= 2$ (\cite{erdogan}) and $n=3$ (\cite{chohamlee}), but are open for a large range of $\alpha$ in higher dimensions; see \cite{hamkolee}. Corollary~\ref{strongercorollary} verifies a special case of a conjecture of Cho, Ham, and Lee \cite[p.~62]{chohamlee} when $n=4$, $q=2$, in the very small range $19/5 \leq \alpha < 4$. When $q=2$ and $n \geq 4$, this conjecture can be written as 
\begin{equation} \label{sconj} s_n(\alpha) = \begin{cases} \frac{n-\alpha}{2}  & 0 \leq  \alpha \leq n-2 \\ \frac{n+2-\alpha}{4} & n-2 \leq \alpha < n \\ \frac{n+1-\alpha}{2} & n \leq \alpha \leq n+1, \end{cases}  \end{equation}
or equivalently
\begin{equation} \label{betaconj} \beta(\alpha, \Gamma^n)  = \begin{cases} \alpha & 0 \leq \alpha \leq n-2 \\
\frac{n-2+\alpha}{2} & n-2 \leq \alpha < n \\
\alpha-1 & n \leq \alpha \leq n+1. \end{cases} \end{equation}
Both \eqref{sconj} and \eqref{betaconj} are known when $n \leq \alpha \leq n+1$, or when $\alpha \leq \frac{n-1}{2}$. Cho, Ham, and Lee give examples in \cite{chohamlee} showing that \eqref{sconj} and \eqref{betaconj} would be sharp if true. However, when $n\geq 6$ it is known that \eqref{sconj} and \eqref{betaconj} are false for $\frac{n+1}{2} < \alpha < n-\frac{4}{n-3}$ \cite[Proposition~5.5]{harris}. The following proposition, based on examples from \cite{du}, 
gives a small improvement to the known counterexamples when $\frac{n+1}{2} < \alpha < n - \frac{2}{n-3}$ and $n \geq 6$, and shows that \eqref{sconj} and \eqref{betaconj} are false in this slightly larger range. Outside of this range the best known counterexamples match \eqref{sconj} and \eqref{betaconj}. The proposition below also holds when $n \in \{4,5\}$ but gives no improvement to the upper bound corresponding to \eqref{betaconj}.  

\begin{proposition} \label{counterexample} If $n \geq 4$ and $(n+1)/2 < \alpha < n$ then 
\begin{equation} \label{betalowerbound} \beta(\alpha, \Gamma^n) \leq \begin{cases} \min\left\{\alpha  - \frac{2\alpha -n - 1}{n+1-m}, \alpha-1 + \frac{n+1-\alpha}{n-m} \right\}  & m < \frac{n-1}{2} \\
 \alpha  - \frac{2\alpha -n - 1}{n+1-m} & m \geq \frac{n-1}{2},\end{cases} \end{equation}
 where $m$ is the unique positive integer such that $\alpha \in [n-m, n-m+1)$, or equivalently $m = \lceil n-\alpha \rceil$. 

 Consequently, \[ s_n(\alpha) \geq \frac{n-\alpha}{2} + \begin{cases} \max\left\{\frac{2\alpha -n - 1}{2(n+1-m)}, \frac{1}{2} - \frac{n+1-\alpha}{2(n-m)} \right\}  & m < \frac{n-1}{2} \\
 \frac{2\alpha -n - 1}{2(n+1-m)} & m \geq \frac{n-1}{2},\end{cases} \]
\end{proposition}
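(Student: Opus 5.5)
The plan is to construct explicit measures $\mu$ and test functions that violate \eqref{fractal3}, or equivalently to bound $\beta(\alpha,\Gamma^n)$ from above by testing the decay inequality on specific measures. The second statement then follows from the first, since $\beta(\alpha,\Gamma^n) = n - 2s_n(\alpha)$. So I would work in the discretised form \eqref{rescaleddiscretised}: find $f$ supported in $A(1)$ and a union $X$ of unit balls in $B_{n+1}(0,R)$ with at most $\gamma r^{\alpha}$ balls in each $r$-ball, and then compare $\|Ef\|_{L^2(X)}$ with $R^{s+\frac{\alpha-n}{2}}\|f\|_2$. The two terms in the minimum should come from two different constructions, and each is then optimised over the free parameter.

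\textbf{First bound.} Following \cite{du}, I would use an example in which $|Ef|$ is large on a lattice-like set inside an $(n+1-m)$-dimensional slab. Split $\mathbb{R}^{n+1} = \mathbb{R}^{m}\times\mathbb{R}^{n+1-m}$ and take the frequencies $\xi$ in a small neighbourhood of a piece of the cone. Modulate $f$ by a sum over a lattice of spacing $R^{-\kappa}$ in the remaining directions, so that, by Poisson summation and stationary phase, $|Ef|\approx \|f\|_{1}$-sized (constructive interference) on unit balls lying on a grid of spacing $R^{\kappa}$ inside a region of width $R^{1/2}$ in $m$ directions and length $R$ in the others. The set $X$ is the union of these unit balls, and the exponent $\kappa$ is chosen so that $X$ has dimension exactly $\alpha$ at every scale. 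The dimension count is
\[
(\text{number of balls}) \approx R^{\alpha},
\]
subject to the $\gamma r^\alpha$ condition at the intermediate scales $R^\kappa$ and $R^{1/2}$. The restriction $\alpha\in[n-m,n-m+1)$ is what lets $\kappa\in[0,1/2]$ be consistent. Computing $\|Ef\|_{L^2(X)}^2/\|f\|_2^2$ and solving for $\beta$ should give $\alpha - \frac{2\alpha-n-1}{n+1-m}$.

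\textbf{Second bound.} For $m<\frac{n-1}{2}$, I would use a Knapp/focusing-type example along the light rays of the cone. Take $f$ to be the indicator of an $R^{-1/2}$-sector of the annulus, thickened in $n-m$ angular directions, so that $Ef$ concentrates on a family of parallel plates. Let $X$ be a $(\alpha - (n-m))$-dimensional Cantor-type set transversal to these plates. Equivalently, I would take $\mu$ to be the product of a measure on an $(n-m)$-plane, which is tangent to the cone along a generator, with an $(\alpha-n+m)$-dimensional Frostman measure in the complement. Estimating $\int_{\Gamma^n} |\widehat\mu(R\xi)|^2\,d\sigma$ from below by the contribution of the $R^{-1}$-neighbourhood of the generator directions should give $R^{-(\alpha-1)-\frac{n+1-\alpha}{n-m}}$ after optimising the Cantor scale. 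The condition $m<\frac{n-1}{2}$ appears because only in that range does this bound beat the first.

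\textbf{Main obstacle.} The hardest step is the first construction, specifically verifying the Frostman condition at all intermediate scales $1\le r\le R$ simultaneously while keeping constructive interference. The lattice example has anisotropic scales ($R^\kappa$, $R^{1/2}$, $R$), and the ball-count must be checked in each range of $r$. I would first do the count with $\kappa$ free and then pick $\kappa$ to make the worst scale tight. For the interference, I would use the fact that on the cone the phase $t|\xi|$ is linear along generators, which lets the Poisson summation argument from \cite{du} for the paraboloid be adapted by freezing the radial variable.
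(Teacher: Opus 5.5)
Both of your constructions have genuine gaps, and the second one cannot work as proposed.

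\textbf{The second term.} Take a product $\mu=\lambda_P\times\mu'$, with $\lambda_P$ Lebesgue measure on the unit ball of an $(n-m)$-plane $P$ tangent to $\Gamma^n$ along a generator, and $\mu'$ an $\alpha'$-dimensional measure on $P^{\perp}\cong\mathbb{R}^{m+1}$, where $\alpha'=\alpha-n+m\in[0,1)$. Such a product never beats the Knapp bound.

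The region you propose to use, $\Gamma^n\cap\mathcal{N}_{R^{-1}}(P^{\perp})$, is a neighbourhood of the \emph{opposite} generator, of width $R^{-1/2}$ in $m$ directions and $R^{-1}$ in the other $n-1-m$. Under $\xi\mapsto R\pi_{P^\perp}\xi$ it maps into a box $B\subseteq P^{\perp}$ of dimensions $R\times R^{1/2}\times\dots\times R^{1/2}$. Since the dual box $B^*$ has diameter $\sim R^{-1/2}$, for \emph{every} $\mu'$ you get
\[ \int_B|\widehat{\mu'}|^2\lesssim|B|\,\mu'(\mathbb{R}^{m+1})\sup_x\mu'(x+CB^*)\lesssim R^{1+\frac{m}{2}-\frac{\alpha'}{2}}\mu'(\mathbb{R}^{m+1})\,c_{\alpha'}(\mu'). \]
Including the Jacobian $R^{-n}$, this region contributes at most $R^{-\frac{n-2+\alpha}{2}}\mu(\mathbb{R}^{n+1})c_{\alpha}(\mu)$. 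With a smooth cutoff on $P$ the rest of the cone is negligible. So the best such an example can give is $\beta(\alpha,\Gamma^n)\le\frac{n-2+\alpha}{2}$.

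The second term is strictly smaller than this, and it is the term realising the minimum, for example when $n=10$ and $7\le\alpha<7.5$. At $\alpha=7$ it equals $6+\frac{4}{7}$, while $\frac{n-2+\alpha}{2}=7.5$ and even the trivial bound is $7$.

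In the paper, the second term comes from the \emph{same} arithmetic example as the first, but with $m+1$ thin directions instead of $m$. The lattice then lives on $\Gamma^{n-m-1}$, which is where the denominator $n-m$ comes from. It is used in the regime $\alpha\ge n+1-(m+1)$, where the worst large scale in the Frostman condition becomes $r=R^{-1/2}$. There one chooses $\kappa=\frac{n+1-\alpha}{2(n-m)}$. Then $c_\alpha(\nu)\lesssim R^{\alpha-n-1}$ exceeds $\nu(\mathbb{R}^{n+1})$, and one still gets $\beta\le\alpha-1+2\kappa$. The restriction $m<\frac{n-1}{2}$ is exactly what is needed to run that example, namely $m+1<\frac{n+1}{2}$.

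\textbf{The first term.} Your set $X$ is the right one: unit balls on an $R^{\kappa}$-grid in $n+1-m$ directions, inside an $R^{1/2}$-slab in the other $m$. But you never say which $f$ interferes constructively on it, and that is the heart of the proof.

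Because the grid includes the time direction, the phases $\langle x',\xi'\rangle+t|\xi'|$ must be essentially constant modulo $1$ over the frequency support at every grid point. This essentially forces the lifted frequencies $(\xi',|\xi'|)$ to lie near (a translate of) $R^{-\kappa}\mathbb{Z}^{n+1-m}$. So $f$ has to be the indicator of $[0,cR^{-1/2}]^m$ times the $cR^{-1}$-neighbourhood of the lattice points on the lower-dimensional cone $\Gamma^{n-m}$.

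This is where the paraboloid analogy breaks down. For the paraboloid, lattice frequencies automatically have $|\xi|^2\in R^{-2\kappa}\mathbb{Z}$, which gives periodicity in $t$. For the cone, $|\xi'|$ is irrational for generic lattice $\xi'$, so Poisson summation gives nothing in $t$.

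If ``freezing the radial variable'' means putting all frequencies on one sphere $|\xi'|=\rho$, you do get interference for all $t$. But you lose a factor $R^{\kappa}$ in the number of frequency points. After optimising $\kappa$ you only obtain $\alpha-1+\frac{2(n+1-\alpha)-m}{n-m}$, which is strictly weaker than the required $\alpha-1+\frac{2(n+1-\alpha)-m}{n+1-m}$.

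The bound $\beta\le n-\frac{m}{2}-\kappa(n-m-1)$ behind the first term needs $\gtrapprox R^{\kappa(n-m-1)}$ lattice points on the cone. Equivalently, you need $\gtrapprox N^{n-m-2}$ representations of $N^2$ as a sum of $n-m$ squares, simultaneously for $\sim R^{\kappa}$ values $N\sim R^{\kappa}$. This is where number theory enters: standard estimates when $n-m\ge4$, and Stieltjes' formula for odd $N$ when $n-m=3$. That count, not the Frostman verification (a routine four-range computation), is the real obstacle, and your proposal does not address it.
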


 To understand the numerology, the upper bound for $\beta(\alpha, \Gamma^n)$ in Proposition~\ref{counterexample} matches the upper bound for $\beta(\alpha, \mathbb{S}^n)$  from \cite[Theorem~1.1]{du} in the given range of $\alpha$, where $\mathbb{S}^n$ is the unit sphere in $\mathbb{R}^{n+1}$ (the $j$ and $d$ in \cite[Theorem~1.1]{du} correspond to $m+1$ and $n+1$ here, respectively, and the range $\alpha \geq n$ is not relevant here). For the cone this is not particularly strong; one might expect the cone to behave worse than a sphere of equal dimension since locally it is ``less curved'', but the examples proving Proposition~\ref{counterexample} are of a global nature, and it is not even known whether $\beta(\alpha, \Gamma^{n}) \leq \beta(\alpha, \mathbb{S}^n)$ for all $\alpha$ and $n$. 
 
 The examples for $\mathbb{S}^n$ from \cite{du} require a lower bound on the number of ways of writing a large integer $N$ as a sum of $k$ squares, where $k \geq 4$. Since any sequence $N \to \infty$ would suffice, the number theory used in \cite{du} could be replaced by an elementary volume argument. The cone case however requires a simultaneous lower bound for many consecutive $N$ on the number of ways of writing  $N^2$ as a sum of $k$ squares, for $k \geq 3$, and here some number theory seems to be necessary. When $n\geq 7$ the same  bound as in \cite{du} can be used, but when $n=6$, to avoid more advanced number theory (e.g.~class numbers) it is actually important that $N^2$ is not just an integer but the square of an integer, to allow the use of Stieltjes' formula for the number of ways of writing the square of an integer as a sum of three squares.

 The minimum formula in \eqref{betalowerbound} holds when $m \geq \frac{n-1}{2}$ too, but in this range the first term is always the minimum. The case $m \geq \frac{n-1}{2}$ could be written as $m = \frac{n-1}{2}$ when $n$ is odd and $m = \frac{n}{2}$ when $n$ is even, since these are the only possible values of $m$.

It follows from \cite[Theorem~3.1]{harris2026} that lower bounds for $s_n(\alpha)$ automatically imply the same lower bounds for the best possible $s$ in the maximal inequality \eqref{fractal}. It is not yet known whether Stein's maximal principle applies in the fractal setting, but possibly the mass transference technique from \cite{eceizabarrena} could be applied to the counterexample in Proposition~\ref{counterexample}, with some work, to yield  lower bounds for $\mathcal{D}_n(s)$ corresponding to Proposition~\ref{counterexample}, which would disprove the conjecture \eqref{divergencesup} when $n \geq 6$. 

\section{Preliminaries, and multilinear refined Strichartz inequality}

 Let $n \geq 4$. For a function $f$ on $B_n(0,1)$ and $R \geq 1$, let $\{\psi_{\theta}\}_{\theta}$ be a smooth partition of unity subordinate to a boundedly overlapping cover of $B_n(0,1) \setminus B_n(0,1/2)$ by tubes $\theta$ of dimensions $\sim R^{-1/2} \times \dotsb \times R^{-1/2} \times 1$, with long axis in the radial direction. Let $\delta>0$ and, for each $\theta$, let $\{\phi_{\nu}\}_{\nu}$ be a smooth partition of unity subordinate to a boundedly overlapping cover of $\mathbb{R}^n$ by dual $R^{1/2+\delta} \times \dotsb \times R^{1/2+\delta} \times  R^{\delta}$ slabs. Then 
\begin{equation} \label{wavepacket} f = \sum_{(\theta, \nu) \in \mathbb{T}} f_{\theta,\nu}, \end{equation}
where $f_{\theta, \nu} = \psi_{\theta} ( f \ast \mathcal{F}^{-1}(\phi_{\nu}))$ is supported in $\theta$, with Fourier transform essentially supported in $2\nu$ and rapidly decaying outside $\nu$.  
 Here $\mathbb{T}$ is the set of pairs $(\theta, \nu)$. For each $(\theta, \nu)$, the restriction of $Ef_{\theta,\nu}$ to $B_{n+1}(0,R)$ is essentially supported in and rapidly decaying outside a ``plank'' $T_{\theta, \nu}$ of dimensions $\sim R^{1/2+\delta} \times \dotsb \times R^{1/2+\delta} \times R^{\delta} \times R$, where the long direction is normal to the cone $\Gamma^n$ at the lift $\widehat{\theta}$ of $\theta$ into the cone, and the short direction is the radial direction of the cone at $\widehat{\theta}$. To simplify notation, define $T_{\theta, \nu}$ to be the \emph{restriction} of this plank to $B_{n+1}(0,10R)$, so that by definition $T_{\theta, \nu} \subseteq B_{n+1}(0,10R)$. For each fixed $\theta$, these planks form a boundedly overlapping cover of $B_{n+1}(0,R)$ as $\nu$ varies.
 
 \begin{sloppypar} Given $2 \leq m \leq n+1$ and real polynomials $P_1, \dotsc, P_{n+1-m}$ on $\mathbb{R}^{n+1}$, let $Z(P_1, \dotsc, P_{n+1-m})$ be their common zero set, which by convention means $\mathbb{R}^{n+1}$ when $m=n+1$ (i.e.~when the set of polynomials is empty). Call $Z(P_1, \dotsc, P_{n+1-m})$ a transverse complete intersection if either $m=n+1$, or if the vectors $\nabla P_1(x), \dotsc, \nabla P_{n+1-m}(x)$ are linearly independent whenever $x \in Z(P_1, \dotsc, P_{n+1-m})$. Sometimes $Z(P_1, \dotsc, P_{n+1-m})$ will be abbreviated as $Z$.\end{sloppypar}

The proof of Theorem~\ref{refineddecoupling} below is similar to \cite{guthiosevichouwang,GGGHMW}, and the variety case in \cite{duguthli,duguthlizhang}, so only the details where the narrow condition affects the exponent will be emphasised, and the proof will only be sketched so as not to obscure the ideas. In later sections, Theorem~\ref{refineddecoupling} and Theorem~\ref{multilinearstrichartz} will only be used in the range $\alpha \leq n-1$ (which when $n=4$ does not overlap with the range where the result is sharp), so their proofs could be skipped on a first reading. 

In the statement of Theorem~\ref{refineddecoupling} below, given $R \geq 1$ let $\mathbb{T}  = \bigcup_{\theta \in \Lambda} \mathbb{T}_{\theta}$, where $\Lambda$ is a boundedly overlapping cover of the upper light cone $\left\{(\xi, |\xi| ) : 1/2 \leq |\xi| \leq 1 \right\}$ by planks $\theta$ of dimensions $R^{-1/2} \times \dotsb \times R^{-1/2} \times 1 \times R^{-1}$ tangent to the light cone and $\sim 1$ away from the origin. For each $\theta$, $\mathbb{T}_{\theta}$ is boundedly overlapping covering of $\mathbb{R}^{n+1}$ by planks of dimensions $\sim R^{1/2+\delta} \times \dotsb \times R^{1/2+\delta} \times R^{\delta} \times R$ in $B_{n+1}(0,R)$ dual to $\theta$.  

\begin{theorem} \label{refineddecoupling} ($m$-narrow refined decoupling for the cone) Suppose that $2 \leq m \leq n+1$. Let $q_m= \frac{2m}{m-2}$.  For any $\epsilon >0$, there exists $\delta_0>0$ such that the following holds for $0 < \delta < \delta_0$, and for all $A,B, M, R \geq 1$.  Suppose that $f = \sum_{T \in \mathbb{W}} f_T$, where $\mathbb{W} \subseteq \mathbb{T}$ and where each $f_T$ is ``essentially supported'' on a plank $T$ of dimensions $R^{1/2 +\delta} \times \dotsb \times R^{1/2+\delta} \times R^{\delta} \times R^{1+\delta}$ meaning that 
\[ \|f_T\|_{L^{\infty}(\mathbb{R}^{n+1} \setminus T ) } \leq A R^{-10000n}\lVert f \rVert_2, \]
with $\widehat{f_T}$ supported in a dual plank $\theta(T)$ of dimensions $R^{-1/2} \times \dotsb \times R^{-1/2} \times 1 \times R^{-1}$ tangent to the light cone $\left\{(\xi, |\xi| ) : 1/2 \leq |\xi| \leq 1 \right\}$ and $\sim 1$ away from the origin.  Assume that $\|f_T\|_{q_m}$ is constant over $T \in \mathbb{W}$ up to a factor of 2. Suppose that $Y \subseteq B_{n+1}(0,R)$ is a disjoint union of unit balls $Q$, such that for each $Q$ there are at most $M$ planks $T \in \mathbb{W}$ with $2T$ intersecting $Q$. Suppose also that for each $Q$, there is an $m$-dimensional vector space $V=V(Q)$ such that all $T \in \mathbb{W}$ intersecting $Q$ have $\theta(T)$ within a distance $AR^{-1/2}$ of $V$, and suppose that each $V=V(Q)$ is ``$B$-transverse to $\Gamma^n$'', meaning that $|c'|^2-|c''|^2 \geq B^{-1}|c|^2$ for all $c=(c', c'') \in V^{\perp} \subseteq \mathbb{R}^n \times \mathbb{R}$. Then 
\[ \left\lVert f \right\rVert_{L^{q_m}(Y)} \leq \left( \frac{M}{\left\lvert \mathbb{W} \right\rvert } \right)^{\frac{1}{2} - \frac{1}{q_m} }C_{\epsilon, \delta}A^{O(1)} B^{O(1)}   R^{\epsilon}  \left( \sum_{T \in \mathbb{W}} \left\lVert f_T \right\rVert_{q_m}^2 \right)^{1/2}. \]\end{theorem}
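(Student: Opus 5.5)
We will follow the refined decoupling scheme of \cite{guthiosevichouwang,GGGHMW}: induction on scales combined with the $\ell^2$ decoupling theorem for a cone of lower dimension, here an $(m-1)$-dimensional cone. The role of the $m$-narrow hypothesis is to identify which decoupling applies on each unit ball $Q$. All $\theta(T)$ for $T$ meeting $Q$ lie within $AR^{-1/2}$ of $V(Q)$. The $B$-transversality of $V(Q)$ to $\Gamma^n$ means that $V\cap\Gamma^n$ is a cone over an $(m-2)$-dimensional sphere with curvature $\gtrsim B^{-O(1)}$. The relevant planks therefore lie in an $AR^{-1/2}$-neighbourhood of a nondegenerate light cone of dimension $m-1$ inside $V\cong\mathbb{R}^m$. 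Bourgain--Demeter decoupling for this cone holds at the critical exponent $q_m=\frac{2m}{m-2}$, since the cone in $\mathbb{R}^m$ behaves like the paraboloid in $\mathbb{R}^{m-1}$. This gives, for a ball $Q'$ of radius $K^2$ with $\tau$ ranging over $K^{-1}$-sectors,
\[ \|f\|_{L^{q_m}(Q')}\lesssim_\epsilon C(A,B)K^{\epsilon}\Big(\sum_\tau \|f_\tau\|_{L^{q_m}(w_{Q'})}^2\Big)^{1/2}. \]
The constant depends polynomially on $A$ and $B$ after rescaling, and the estimate is uniform in $V$.

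We prove the theorem by induction on $R$, with the inductive hypothesis being the statement at all smaller scales, and fix an intermediate scale $K=R^{\delta'}$. The steps are as follows.
\begin{enumerate}
\item Pigeonhole the unit balls $Q\subseteq Y$ according to the number of planks through them, and sort the planks dyadically. This costs $\log R$ factors.
\item Cover $Y$ by balls $Q'$ of radius $K^2$ (more precisely, cylinders adapted to the cone geometry) and apply the local decoupling above on each.
\item For each $\tau$, rescale with the Lorentz transformation adapted to the sector $\tau$. This maps $f_\tau$ restricted to an $R$-ball to a function at scale $R/K^2$ whose wave packets come from the planks $T$ with $\theta(T)\subseteq\tau$. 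These planks group into fatter planks $\square$ at scale $R/K^2$, and the sets $\square$ are again essentially supported as required. Narrowness is preserved: the rescaled frequencies remain near the image of $V$, which is still $O(B)$-transverse.
\item Pigeonhole once more so that the number of $T$ per $\square$ and the multiplicities $M_\square$ (planks per unit ball in the rescaled picture) are uniform. Apply the induction hypothesis to each $f_\square$, with $M_\square$ and $|\mathbb{W}_\square|$.
\item Combine the estimates using H\"older in the sum over $\square$, together with the incidence bound
\[ \frac{M_\square}{|\mathbb{W}_\square|}\cdot\frac{\#\{\square \text{ through a } K^2\text{-ball}\}}{\#\square}\lesssim \frac{M}{|\mathbb{W}|}. \]
The exponent $\frac12-\frac1{q_m}$ closes the induction because, when all $\|f_T\|_{q_m}$ are comparable, $\|f_T\|_{q_m}^{q_m}\sim\|f_T\|_2^{q_m}|T|^{1-q_m/2}$. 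As in \cite{GGGHMW}, the resulting powers then telescope to produce $(M/|\mathbb{W}|)^{1/2-1/q_m}$.
\end{enumerate}

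We expect the main obstacle to be step 3, the rescaling. A Lorentz rescaling of the sector $\tau$ must send $V(Q)$ to a subspace that is still $O(B)$-transverse, with $A$ changing only by a bounded factor. Otherwise the constants $A^{O(1)}B^{O(1)}$ compound over the $\sim1/\delta'$ induction steps. The remedy we would try is to exploit that the induction only needs finitely many ($O_\epsilon(1)$) steps, since $K=R^{\delta'}$. A polynomial loss in $A$ and $B$ at each step is then acceptable and absorbed into $A^{O(1)}B^{O(1)}$, with the number of steps controlled by $\epsilon$. A secondary issue is that $V$ varies with $Q$. This is handled by making the local decoupling uniform in $V$ and noting that the induction hypothesis allows $V$ to depend on the unit ball.
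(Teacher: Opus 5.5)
Your outline has the same architecture as the paper's sketch: pigeonholing, $q_m$-narrow cone decoupling on $K^2$-balls, H\"older over the $\Box$'s, Lorentz rescaling with induction on scales, and dyadic constancy of $\|f_T\|_{q_m}$. The gap is the one step the paper says must be checked, namely that the narrow hypothesis survives the rescaling with controlled constants. You only assert it, and your remedy targets the wrong kind of loss.

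The frequency map $L^{-1}$ attached to $\tau$ stretches the angular directions by $K$ and the normal direction $(e_{n+1}-e_n)/\sqrt{2}$ by $K^2$. The physical map $L$ multiplies the Minkowski form $|c'|^2-|c''|^2$ by exactly $K^{-2}$, but shrinks Euclidean lengths by factors anywhere in $[K^{-2},1]$. So a priori the tolerance $AR^{-1/2}$ around $V$ only yields a tolerance $AK\widetilde{R}^{-1/2}$ around $L^{-1}(V)$. Likewise a $B$-transverse $V$ can become only $\sim K^{2}B$-transverse; this happens if $e_1+(e_n+e_{n+1})/\sqrt{2}\in V^{\perp}$. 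These are losses of powers of $K=R^{\delta'}$, which the gain $K^{-\epsilon}$ cannot pay for, and tolerating polynomial losses in $A,B$ does not address them.

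Your step count is also off. With induction on $R$ and $K=R^{\delta'}$, the recursion depth is $\sim\delta'^{-1}\log\log R$, not $O_\epsilon(1)$, so per-step losses must be absorbed by $K^{-\epsilon}$. A loss polynomial in $B$ (such as the $A\mapsto AB$ in the paper) is affordable once $R\geq B^{O(1/(\epsilon\delta'))}$, using a trivial estimate below that threshold. A loss of $K^{O(1)}$ is not affordable.

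The missing ingredients are exactly the paper's computation.
\begin{itemize}
\item $B$-transversality upgrades ``within $AR^{-1/2}$ of $V$'' to ``within $\sim ABR^{-1/2}$ of $V\cap\Gamma^n$''. The displacement is then essentially tangent to the cone and is magnified only by $K$, so $\theta(\widetilde{T})$ lies within $\sim AB\widetilde{R}^{-1/2}$ of $L^{-1}(V)$.
\item $\tau$ is relevant on $Q$ only if some $\theta(T)\subseteq\tau$ is near $V$, so $\dist(V,(e_n+e_{n+1})/\sqrt{2})<2K^{-1}$. Hence every $x\in V^{\perp}$ has $|x_n+x_{n+1}|\lesssim K^{-1}|x|$. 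Since $L$ is symmetric, $(L^{-1}V)^{\perp}=L(V^{\perp})$. If $c=L(x)$ is a unit vector, this forces $|x|\gtrsim K$, whence $|c'|^2-|c''|^2=K^{-2}(|x'|^2-|x''|^2)\gtrsim B^{-1}$. Only an absolute constant is lost.
\end{itemize}

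Your treatment of $V$ varying with $Q$ also misses the mechanism. Decoupling on a $K^2$-ball needs a single $V$ for every plank meeting that ball, and uniformity of the decoupling constant in $V$ does not provide one. The paper takes $K=R^{\delta^2}$, so that $K^2$ is far below the thinnest plank dimension $R^{\delta}$, and runs the induction for $K^2$-balls. It then uses that any (rescaled) plank meeting such a ball has its double containing it. So all relevant $T$ have $2T$ meeting one fixed $Q$, and hence $\theta(T)$ lies near $V(Q)$. If $\delta'$ is not small compared with $\delta$, this breaks.

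Finally, the incidence inequality you display effectively requires $|\mathbb{W}|\lesssim|\mathbb{B}||\mathbb{W}_{\Box}|$. Pigeonholing does not give this, e.g.\ if most planks of $\mathbb{W}$ miss $Y$. What closes the argument (in the paper's notation) are the upper bounds $M'M''\lesssim M$ and $|\mathbb{B}||\mathbb{W}_{\Box}|\lesssim|\mathbb{W}|$. The latter enters through the factor $(|\mathbb{B}||\mathbb{W}_{\Box}|/|\mathbb{W}|)^{1/q_m}$.
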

\begin{proof}[Sketch of proof] 
It suffices to prove a slightly stronger statement where the unit balls $Q$ are enlarged to balls of radius $K^2$, where $K \ll R^{\delta} \ll R^{\epsilon}$ (say $K=R^{\delta^2})$. This slightly stronger version will be proved via induction on scales. The function $f$ will be decomposed into pieces (to be explained below) which will resemble the claim at scale $\widetilde{R} = R/K^2$ after a Lorentz rescaling which preserves the cone. By pigeonholing, 
\begin{equation}  \label{firststep}  \|f\|_{L^{q_m}(Y)} \lesssim (\log R)^{O(1)} \left\lVert \sum_{\Box \in \mathbb{B}} \sum_{T \in \mathbb{W}_{\Box}} \eta_{Y_{\Box}} f_T \right\rVert_{L^{q_m}(Y')} . \end{equation}
Here the sets $\Box$ have dimensions $\sim R/K \times \dotsb \times R/K \times R \times R/K^2$ in physical space, each corresponding to a $K^{-1} \times \dotsb \times K^{-1} \times K^{-2} \times 1$ plank $\tau=\tau(\Box)$ in frequency space tangent to the forward light cone (the long direction of $\Box$ is the short direction of $\tau$ and vice-versa).  The set $\mathbb{W}_{\Box}$ is the set of $T \in \mathbb{W}$ with $\theta(T) \subseteq \tau(\Box)$ and $T \subseteq \Box$ (some linear algebra shows that the $\mathbb{W}_{\Box}$ are boundedly overlapping, so can be treated as disjoint for the purposes of this sketch). The set $Y_{\Box}$ is a union over sets $Q_{\Box}$ of dimensions $\widetilde{K}^2K \times \dotsb \times \widetilde{K}^2K \times \widetilde{K}^2 \times \widetilde{K}^2K^2$, where $\widetilde{K} = \widetilde{R}^{\delta^2}$ (these will become $\widetilde{K}^2$-balls under rescaling), pigeonholed such that each $Q_{\Box}$ intersects $\sim M'$ many sets $2T$ with $T\in \mathbb{W}_{\Box}$. The function $\eta_{Y_{\Box}}$ is a sum over smooth cutoffs to those $Q_{\Box}\subseteq Y_{\Box}$.   The set $\mathbb{B}$ was pigeonholed such that $\left\lvert \mathbb{W}_{\Box} \right\rvert$ is constant over $\Box \in \mathbb{B}$ up to a factor of 2. Finally, the balls $Q \subseteq Y'$ are chosen such that each $2Q$ intersects $\sim M''$ sets $Y_{\Box}$ as $\Box$ ranges over $\mathbb{B}$.

By the narrow cone decoupling theorem 
(see e.g.~\cite[Theorem~7.1]{gaoliumiaoxi}), and the assumption on $V=V(Q)$ in the theorem statement, decoupling holds with exponent $q_m = \frac{2m}{m-2}$ on each $Q \subseteq Y$. Applying this to \eqref{firststep} gives
\begin{equation} \label{manybrackets} \|f\|_{L^{q_m}(Y)} \lesssim K^{O(\delta)} \left( \sum_Q \left( \sum_{\Box \in \mathbb{B}}\left\lVert  \sum_{T \in \mathbb{W}_{\Box}} \eta_{Y_{\Box}} f_T \right\rVert_{L^{q_m}(2Q)}^2 \right)^{q_m/2} \right)^{1/q_m}. \end{equation}
Since each $Q$ intersects $\sim M''$ many $\Box \in \mathbb{B}$, Hölder's inequality applied to the sum over $\mathbb{B}$ in \eqref{manybrackets} gives
\begin{equation} \label{preinduction} \|f\|_{L^{q_m}(Y)} \lesssim K^{O(\delta)} (M'')^{\frac{1}{2} - \frac{1}{q_m} } \left(   \sum_{\Box \in \mathbb{B}}\left\lVert  \sum_{T \in \mathbb{W}_{\Box}}  f_T \right\rVert_{L^{q_m}(Y_{\Box})}^{q_m}  \right)^{1/q_m}. \end{equation}
After a Lorentz rescaling $L$, for each $\Box$ the right-hand side of \eqref{preinduction} will satisfy the assumptions of the theorem at scale $\widetilde{R} = R/K^2$. Similarly to \cite[p.~628]{duguthli}, it is required to check that the narrow 
condition is preserved under rescaling. This is because, for a given $Q_{\Box}$ (preimage of $\widetilde{K}^2$-ball under rescaling), to make non-negligible contribution it must intersect some fixed $Q \subseteq Y$, so let $V=V(Q_{\Box})=V(Q)$.  Any rescaled plank $\widetilde{T}$ intersecting $L(Q_{\Box})$ must have $2\widetilde{T}$ containing $L(Q_{\Box})$, and so $T:=L^{-1}\left(\widetilde{T}\right)$ has the property that $2T$ intersects $Q$. Therefore, $\theta(T)$ is within $AR^{-1/2}$ of $V$. In particular\footnote{This step is where the ``transverse to $\Gamma^n$'' condition is used. It relies on the elementary property that if $P$ is an $(m-1)$-dimensional affine plane in $\mathbb{R}^d$, $2 \leq m \leq d$, of distance at most $1-B^{-1}$ from the origin, then $\mathcal{N}_{K^{-1}}(P) \cap S^{d-1} \subseteq \mathcal{N}_{C_n B K^{-1}}(P \cap S^{d-1})$ for $K$ large depending on $B$.}, $\theta(T)$ is within $\sim AB R^{-1/2}$ of $V \cap \Gamma^n$ and thus $\theta\left(\widetilde{T}\right) = L^{-1}(\theta(T))$ is within $\sim ABR^{-1/2}K = AB \widetilde{R}^{-1/2}$ of $L^{-1}(V)$. It is also required to check that $L^{-1}(V)$ is ``$\sim B$-transverse to $\Gamma^n$''. By rotational symmetry in the first $n$-coordinates, it may be assumed that the centre of $\tau(\Box)$ is $(e_{n+1}+e_n)/\sqrt{2}$, and in particular
\begin{equation} \label{distcdn} \dist\left(V, (e_n+e_{n+1})/\sqrt{2}\right) < 2K^{-1}, \end{equation} and thus $L$ is the composition of the rotation 
\[ U : e_j \mapsto e_j \quad 1 \leq j \leq n-1, \qquad \frac{e_{n+1}+e_n}{\sqrt{2}} \mapsto e_n, \qquad \frac{e_{n+1}-e_n}{\sqrt{2}} \mapsto e_{n+1}, \]
followed by the parabolic scaling 
\[ (x_1, \dotsc, x_{n-1}, x_n, x_{n+1}) \mapsto (K^{-1}x_1, \dotsc, K^{-1} x_{n-1}, x_n, K^{-2} x_{n+1}) \]
followed by $U^{-1}$. A computation gives
\begin{multline} \label{computation} L(x_1,\dotsc, x_{n-1}, x_n, x_{n+1}) = \bigg(K^{-1}x_1, \dotsc, K^{-1}x_{n-1}, \\
\frac{x_{n+1}+x_n}{2} - K^{-2} \frac{x_{n+1}-x_n}{2}, \frac{x_{n+1}+x_n}{2} + K^{-2} \frac{x_{n+1}-x_n}{2} \bigg). \end{multline}
It follows that if $c=(c',c'') \in (L^{-1}(V))^{\perp} \subseteq \mathbb{R}^n \times \mathbb{R}$ is a unit vector, then $c = L(x)$ for some $x \in V^{\perp}$. Moreover, $|x_{n+1}+x_n| \leq 2K^{-1} |x|$ by \eqref{distcdn}, and thus $|x| \gtrsim K$ by \eqref{computation} and since $L(x)=c$ is a unit vector (note we may assume $A \leq R^{1/100}$). Hence 
\[ |c'|^2- |c''|^2 = K^{-2} \sum_{j=1}^{n-1} |x_j|^2 - K^{-2} (x_{n+1}^2-x_n^2) = K^{-2}(|x'|^2-|x''|^2) \gtrsim B^{-1}. \]
This verifies the transversality condition.

Since the right-hand side of \eqref{preinduction} satisfies the theorem assumptions at scale $\widetilde{R}$, applying the inductive assumption to \eqref{preinduction} gives
\[ \|f\|_{L^{q_m}(Y)} \lesssim R^{\epsilon} K^{O(\delta)-\epsilon} \left(\frac{M'M''}{\left\lvert \mathbb{W}_{\Box} \right\rvert}\right)^{\frac{1}{2} - \frac{1}{q_m} } \left(   \sum_{\Box \in \mathbb{B}}\left( \sum_{T \in \mathbb{W}_{\Box}} \left\lVert f_T \right\rVert_{q_m}^2\right)^{q_m/2}  \right)^{1/q_m}, \]
where the Jacobian factor from the rescaling cancels out since there are $L^{q_m}$ norms on both sides. Similarly to \cite{GGGHMW}, by the dyadically constant property of $\|f_T\|_{q_m}$, the above becomes 
\[ \|f\|_{L^{q_m}(Y)} \lesssim R^{\epsilon}  K^{O(\delta)-\epsilon}\left(\frac{M'M''}{\left\lvert \mathbb{W} \right\rvert}\right)^{\frac{1}{2} - \frac{1}{q_m} } \left( \frac{ \lvert \mathbb{B}\rvert \lvert \mathbb{W}_{\Box} \rvert}{\lvert \mathbb{W} \rvert}\right)^{\frac{1}{q_m}} \left(   \sum_{T \in \mathbb{W}} \left\lVert f_T \right\rVert_{q_m}^2\right)^{\frac{1}{2}}  . \]
The inequalities $\lvert \mathbb{B}\rvert \lvert \mathbb{W}_{\Box} \rvert \lesssim \lvert \mathbb{W} \rvert$ and $M'M'' \lesssim M$ hold by essentially the same argument as in \cite{GGGHMW,guthiosevichouwang}, so the above becomes
 \[ \|f\|_{L^{q_m}(Y)} \lesssim R^{\epsilon} K^{O(\delta)-\epsilon} \left(\frac{M}{\left\lvert \mathbb{W} \right\rvert}\right)^{\frac{1}{2} - \frac{1}{q_m} }  \left(   \sum_{T \in \mathbb{W}} \left\lVert f_T \right\rVert_{q_m}^2\right)^{\frac{1}{2}}. \]
The induction closes since the exponent of $K$ is negative. \end{proof}

The following is a wave version of the multilinear refined Strichartz inequality from \cite[Theorem~4.5]{duguthlizhang}, and a narrow version of the multilinear Strichartz inequality from \cite{harris}. The proof below has many similiarities to the (alternative) proof in \cite{demeterstrichartz} of the multilinear refined Strichartz inequality for the paraboloid. Again, the proof will only be sketched so as not to not obscure the main ideas. 
\begin{theorem} \label{multilinearstrichartz} ($m$-narrow $k$-linear refined Strichartz for the wave equation
) Assume that $2 \leq k \leq m \leq n+1$. Let $q_m = \frac{2m}{m-2}$, let $\epsilon >0$ and $0 < \delta \ll \epsilon$. Let $R, A,K\geq 1$.
Suppose that $Y$ is a disjoint union of $N$ many balls $Q$ of radius $R^{1/2}$, such that for each $j$, $\|Ef_j\|_{L^{q_m}(Q)}$ is constant over $Q \subseteq Y$ up to a factor of 2, with $\{f_1, \dotsc, f_k\}$ $K^{-1}$-transversely supported in $B_n(0,1)\setminus B_n(0,1/2)$. Suppose also that, for each $Q \subseteq Y$, there is an $m$-dimensional vector space $V=V(Q)$ such that for all $j$, $\|Ef_{j,T}\|_{\infty} \leq A R^{-10000n} \|f\|_2$ for all $R^{1/2+\delta} \times \dotsb \times R^{1/2+\delta} \times R^{\delta} \times R$-planks $T$  intersecting $Q$ such that $d(G(\theta(T)), V) > R^{-1/2}$. 
Then
\[ \left\lVert \prod_{j=1}^k |Ef_j|^{1/k} \right\rVert_{L^{q_m}(Y) } \leq C_{\epsilon,A} K^{O(1)} R^{\epsilon} 
R^{ - \frac{n-1}{4} +\frac{n+1}{2q_m}} N^{-\frac{k-1}{k} \left( \frac{1}{2} - \frac{1}{q_m} \right) }  \prod_{j=1}^k \|f_j\|_2^{1/k}. \] \end{theorem}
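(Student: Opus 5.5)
The plan is to follow the alternative proof of the multilinear refined Strichartz inequality in \cite{demeterstrichartz}. There, one combines three ingredients: the refined decoupling inequality (here Theorem~\ref{refineddecoupling} with exponent $q_m$), a multilinear Kakeya/restriction input at scale $R^{1/2}$, and pigeonholing of wave packets.

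\textbf{Step 1: wave packets and pigeonholing.} Decompose each $f_j=\sum_{T}f_{j,T}$ into wave packets at scale $R$ as in \eqref{wavepacket}. By the hypothesis, only planks $T$ with $G(\theta(T))$ within $R^{-1/2}$ of $V(Q)$ contribute on $Q$, up to negligible error. Dyadically pigeonhole, at a cost of $(\log R)^{O(1)}$:
\begin{itemize}
\item the amplitudes $\|f_{j,T}\|_2$, so that they are constant on a subcollection $\mathbb{W}_j$;
\item the number $M_j$ of planks of $\mathbb{W}_j$ through each unit ball of $Y$.
\end{itemize}
Refine $Y$ accordingly. By Plancherel, $\|f_j\|_2^2\sim|\mathbb{W}_j|\,\|f_{j,T}\|_2^2$. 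Apply Hölder to the product, $\|\prod_j|Ef_j|^{1/k}\|_{L^{q_m}(Y)}\le\prod_j\|Ef_j\|_{L^{q_m}(Y)}^{1/k}$. This reduction is too lossy by itself: the gain must come from transversality. So I would instead run the argument at the $R^{1/2}$-ball level.

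\textbf{Step 2: local bound on each $R^{1/2}$-ball.} On each $Q$, apply Theorem~\ref{refineddecoupling} at scale $R$ to each $Ef_j$ restricted to $Q$. The transversality condition on $V(Q)$ should follow from $V(Q)$ containing tangent directions of the cone; if needed, I would absorb this by working with the nondegenerate part. This gives
\[ \|Ef_j\|_{L^{q_m}(Q)}\lesssim R^{\epsilon}\Big(\frac{M_j}{|\mathbb{W}_j|}\Big)^{\frac12-\frac1{q_m}}\Big(\sum_{T\in\mathbb{W}_j}\|Ef_{j,T}\|_{q_m}^2\Big)^{1/2}. \]
Here $\|Ef_{j,T}\|_{q_m}\sim R^{-\frac{n-1}{4}+\frac{n+1}{2q_m}}\|f_{j,T}\|_2$, because $T$ has volume $\sim R^{(n+1)/2}$ and $|Ef_{j,T}|\sim R^{-(n-1)/4}\|f_{j,T}\|_2$ on it. Thus the right side is $R^{-\frac{n-1}{4}+\frac{n+1}{2q_m}}(M_j/|\mathbb{W}_j|)^{\frac12-\frac1{q_m}}\|f_j\|_2$ up to $R^\epsilon$. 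The key counting input is the bound on $M_j$ weighted against $N$. Each plank meets $\lesssim R^{1/2}$ of the $R^{1/2}$-balls, so double counting gives $N\cdot M_j^{\mathrm{eff}}\lesssim|\mathbb{W}_j|R^{1/2}$ in the linear case.

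\textbf{Step 3: $k$-linear gain.} The $k$-linear gain $N^{-\frac{k-1}{k}(\frac12-\frac1{q_m})}$ should come from the $k$-linear Kakeya inequality for planks, which applies because the $f_j$ are $K^{-1}$-transverse, at the cost of $K^{O(1)}$. Let $\chi_{j}=\sum_{T\in\mathbb{W}_j}\chi_{T}$ be the sum of indicators of the planks of $\mathbb{W}_j$. The multilinear Kakeya estimate $\int\prod_j\chi_j^{1/(k-1)}\lesssim K^{O(1)}R^{\epsilon}R^{\frac{n+1}{2}}\ldots\prod|\mathbb{W}_j|^{1/(k-1)}$, after discarding the unit-scale directions, bounds $N\prod_j M_j^{1/(k-1)}$ by $\prod_j|\mathbb{W}_j|^{1/(k-1)}$ times powers of $R$. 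These powers of $R$ should cancel exactly against the normalization already extracted, since for $Q$ of radius $R^{1/2}$ the planks behave like $R^{1/2}$-tubes at the coarse scale. Inserting this bound into the product of the local bounds from Step~2, summing over the $N$ balls using that $\|Ef_j\|_{L^{q_m}(Q)}$ is constant over $Q$, and taking $k$th roots yields
\[ \Big(\prod_j\frac{M_j}{|\mathbb{W}_j|}\Big)^{\frac1k(\frac12-\frac1{q_m})}\lesssim N^{-\frac{k-1}{k}(\frac12-\frac1{q_m})}. \]

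\textbf{Main obstacle.} I expect the hardest step to be the compatibility of Steps 2 and 3. Refined decoupling counts planks per unit ball, while multilinear Kakeya counts per $R^{1/2}$-ball. The remedy I would try first, as in \cite{demeterstrichartz}, is to apply refined decoupling only at scale $R^{1/2}$, i.e.\ one induction step. Inside each $Q$, one would then use a trivial $L^\infty$/$L^2$ bound on $R^{1/2}$-scale wave packets and pigeonhole $M_j$ per $Q$. This lets the $k$-linear Kakeya at scale $R$ over the $N$ balls supply the factor $N^{-(k-1)/k}$ to the power $\frac12-\frac1{q_m}$.
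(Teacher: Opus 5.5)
Your ingredients are exactly the paper's. These are equal-amplitude wave packets, multiplicities $M_j$, Theorem~\ref{refineddecoupling} producing the factor $(M_j/|\mathbb{W}_j|)^{\frac12-\frac1{q_m}}$, and the per-packet bound $\|Ef_{j,T}\|_{q_m}\lessapprox R^{-\frac{n-1}{4}+\frac{n+1}{2q_m}}\|f_{j,T}\|_2$. Then come orthogonality and Bennett--Carbery--Tao on the $R^{1/2}$-neighbourhoods of the planks, giving $N\lessapprox\prod_j(M_j/|\mathbb{W}_j|)^{-1/(k-1)}$, which is your final display in Step~3. The gap is in how you glue these together. In Step~2 you apply refined decoupling on each $Q$ separately and then sum over the $N$ balls. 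But the right-hand side of Theorem~\ref{refineddecoupling} is global (a sum over all of $\mathbb{W}_j$ of global $L^{q_m}$ norms) and does not shrink when $Y$ is a single ball. Summing $q_m$-th powers therefore costs an extra factor $N^{1/q_m}$, and the claimed power of $N$ is lost. Restricting to the packets through $Q$ does not rescue this: the refined factor is then $\sim 1$, and after summing you are off by $(NM_j/|\mathbb{W}_j|)^{1/q_m}$, which can be as large as $R^{1/(2q_m)}$. Relatedly, Hölder is not ``too lossy''. The paper starts precisely with $\|\prod_j|Ef_j|^{1/k}\|_{L^{q_m}(Y)}\le\prod_j\|Ef_j\|_{L^{q_m}(Y)}^{1/k}$, because transversality enters only later, through Kakeya applied to the multiplicities.

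The fix, which also dissolves your ``main obstacle'', is the paper's blunt observation. Any plank meeting a unit ball inside an $R^{1/2}$-ball $Q$ meets $Q$, so the unit-ball multiplicity is $\lesssim M_j$, the number of packets of $\mathbb{W}_j$ meeting $Q$. Theorem~\ref{refineddecoupling} only needs an upper bound $M$, so this suffices. So pigeonhole $M_j$ per $R^{1/2}$-ball, as your last paragraph suggests, not per unit ball as in Step~1; refining at unit scale would replace $N$ by a possibly much smaller count in the Kakeya step. The hypothesis that each $\|Ef_j\|_{L^{q_m}(Q)}$ is constant over $Q$ is what makes this cheap. Keeping the most populous class leaves $N'\gtrsim N(\log R)^{-k}$ balls and lowers each $\|Ef_j\|_{L^{q_m}(Y)}$ by at most $(\log R)^{O(1)}$. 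Then apply Theorem~\ref{refineddecoupling} once to each $Ef_j$ on all of $Y$ with $M=CM_j$. Use the same $M_j$ in the lower bound $\int_Q\prod_j\bigl(\sum_{T\in\mathbb{W}_j}\chi_{\mathcal{N}_{R^{1/2}}(T)}\bigr)^{1/(k-1)}\gtrsim R^{\frac{n+1}{2}}\prod_jM_j^{1/(k-1)}$ before summing over $Q$ and invoking $k$-linear Kakeya. No extra induction step at scale $R^{1/2}$ or trivial $L^\infty$/$L^2$ bound inside $Q$ is needed. As sketched, your remedy does not obviously recover the scale-$R$ factor $(M_j/|\mathbb{W}_j|)^{\frac12-\frac1{q_m}}$. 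Finally, the $B$-transversality of $V(Q)$ comes from $k\ge 2$. $V(Q)$ contains vectors within $R^{-1/2}$ of two $K^{-1}$-separated null directions, hence a vector that is timelike with margin $K^{-O(1)}$. This gives $|c'|^2-|c''|^2\gtrsim K^{-O(1)}|c|^2$ on $V(Q)^\perp$.
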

Above, ``transversely supported'' means that 
\[ \left\lvert G(x_1) \wedge \dotsb \wedge G(x_k) \right\rvert > K^{-1}, \]
whenever $x_j \in \supp \widehat{Ef_j}$, where $G(y)$ gives the unit normal to the light cone at a nonzero point $y$ on the light cone. Note that since $k \geq 2$ and by the transverse support condition, in the proof below it may be assumed that each $V(Q)$ is $\sim K^{-O(1)}$-transverse to $\Gamma^n$ in the sense of Theorem~\ref{refineddecoupling}. 

\begin{proof}[Sketch of proof of Theorem~\ref{multilinearstrichartz}] For each $j$, it may be assumed that the wave packets in $f_j$ have been pigeonholed such that they each have equal $L^2$ norm up to a factor of 2. Let $\mathbb{W}_j$ be the set of pigeonholed wave packets. After pigeonholing the $R^{1/2}$-balls $Q$, it may be assumed that for each $j$, the number of wave packets from $\mathbb{W}_j$ intersecting $Q$ is $\sim M_j$. By Hölder's inequality, followed by $m$-narrow refined decoupling (Theorem~\ref{refineddecoupling}),
\begin{equation} \label{holderstep} \left\lVert \prod_{j=1}^k |Ef_j|^{1/k} \right\rVert_{L^{q_m}(Y) }\lessapprox \prod_{j=1}^k \left( \frac{M_j}{\left\lvert \mathbb{W}_j \right\rvert } \right)^{\frac{1}{k}\left(\frac{1}{2} - \frac{1}{q_m} \right)} \left( \sum_{T \in \mathbb{W}_j } \left\lVert Ef_{j,T} \right\rVert_{q_m}^2  \right)^{\frac{1}{2k} }. \end{equation}
The above used the (blunt) observation that, for a 1-ball inside an $R^{1/2}$-ball, if there are $\sim M_i$ planks intersecting the $R^{1/2}$-ball, then there must be $\lesssim M_i$ planks intersecting the 1-ball. By the Stein-Tomas theorem (interpolating between the cone Stein-Tomas exponent $q=\frac{2(n+1)}{n-1}$ and the trivial case $q=\infty$), or by Bernstein's inequality:
\begin{equation} \label{steintomas} \left\lVert Ef_{j,T} \right\rVert_{q_m} \lessapprox R^{ - \frac{n-1}{4} +\frac{n+1}{2q_m}}\|f_{j,T}\|_2. \end{equation}
By substituting \eqref{steintomas} into \eqref{holderstep} and by orthogonality,
\begin{equation} \label{afterdecoupling} \left\lVert \prod_{j=1}^k |Ef_j|^{1/k} \right\rVert_{L^{q_m}(Y) }\lessapprox R^{ - \frac{n-1}{4} +\frac{n+1}{2q_m}} \prod_{j=1}^k \left( \frac{M_j}{\left\lvert \mathbb{W}_j \right\rvert } \right)^{\frac{1}{k}\left(\frac{1}{2} - \frac{1}{q_m} \right)} \|f_j\|_2^{\frac{1}{k}} . \end{equation}
For each pigeonholed $R^{1/2}$-ball $Q$, 
\[ R^{(n+1)/2} \lesssim \left( \prod_{j=1}^k M_j \right)^{\frac{-1}{k-1}} \int_Q \prod_{j=1}^k \left\lvert \sum_{T_j \in \mathbb{W}_j} \chi_{\mathcal{N}_{R^{1/2}}(T_j)} \right\rvert^{\frac{1}{k-1}}. \]
Summing over $Q$ gives
\[ N R^{(n+1)/2} \lessapprox \left( \prod_{j=1}^k M_j \right)^{\frac{-1}{k-1}} \int_Y \prod_{j=1}^k \left\lvert \sum_{T_j \in \mathbb{W}_j} \chi_{\mathcal{N}_{R^{1/2}}(T_j)}\right\rvert^{\frac{1}{k-1}}. \]
By the Bennett-Carbery-Tao $k$-linear Kakeya inequality in $\mathbb{R}^{n+1}$ (see \cite[Theorem~5.1]{BCT}) applied to the second factor in the right-hand side above,
\[ N \lessapprox 
 \prod_{j=1}^k \left( \frac{M_j}{\left\lvert \mathbb{W}_j \right\rvert} \right)^{\frac{-1}{k-1}}. \] 
 Substituting the above into \eqref{afterdecoupling} finishes the proof. \end{proof}

\section{Weighted broad and linear inequality}

\begin{definition}  Given $n \geq 2$, $A, K \geq 1$ and a function $f$ on $B_n(0,1) \setminus B_{n}(0,1/2)$, form a boundedly overlapping cover of $B_{n}(0,1) \setminus B_n(0,1/2)$ by tubes $\tau$ of dimensions $1 \times K^{-1} \times \dotsb \times  K^{-1}$, where the long direction is radial, and let $f_{\tau} = f \psi_{\tau}$, where $\psi_{\tau}$ is a partition of unity subordinate to the cover. Let $\mathcal{Q}$ be a fixed boundedly overlapping cover of $\mathbb{R}^{n+1}$ by $K^2$-balls $Q$. Given a finite $p \geq 1$, a union $U$ of $K^2$-balls from $\mathcal{Q}$, define the $(n-1)$-broad $L^p$ norm of $Ef$ over $U$ with parameter $A$ by
\[ \|Ef\|_{BL^p_{n-1,A} (U)} = \left( \sum_{Q \subseteq U} \inf_{V_1, \dotsc, V_A \in G(n-2, n+1) } \sup_{\substack{\tau \\ d(G(\tau), V_a) \geq K^{-2} \, \forall a} }  \int_{Q} \left\lvert Ef_{\tau} \right\rvert^p \right)^{1/p}. \] 
The function $U \mapsto \|Ef\|_{BL^p_{n-1,A} (U)}^p$ extends to Borel a measure on $\mathbb{R}^{n+1}$ by defining
\[ \|Ef\|_{BL^p_{n-1,A} (U)}^p = \sum_{Q \in \mathcal{Q}} \frac{ |U \cap Q|}{|Q|} \|Ef\|_{BL^p_{n-1,A} (Q)}^p. \] 
If $p = \infty$, define
\begin{equation} \label{infinityversion} \|Ef\|_{BL^p_{n-1,A} (U)} = \sup_{Q \subseteq U}   \inf_{V_1, \dotsc, V_A \in G(n-2, n+1) } \sup_{\substack{\tau \\ d(G(\tau), V_a) \geq K^{-2} \, \forall a} }  \left\lVert  Ef_{\tau} \right\rVert_{L^{\infty}(Q)}. \end{equation} \end{definition}

Above, $G(\tau)$ is the set of unit normals to the cone at points on the lift of $\tau$ into the cone $\Gamma$. 

The $k$-broad norm could be defined for general $k$, but only the $(n-1)$-broad version will be needed here. Broad norms were introduced in \cite{guth}. 

\begin{theorem} \label{broadweighted} Let $n \geq 4$. Suppose that $n-1 \leq m \leq n+1$, and let $K \geq 1$. For any $\epsilon >0$, there are constants $ 0 < \delta = \delta_{n+1} \ll \delta_n \ll \dotsb \ll \delta_1 \ll \delta_0 \ll \epsilon$, and a large constant $\overline{A}$ so that the following holds for all $1 \leq A \leq \overline{A}$.  Let $\gamma\geq 1$. Let $D_Z$ be any positive integer, and let $Z(P_1, \dotsc, Z_{P_{n+1-m}})$ be a transverse complete intersection, where $\deg(P_j) \leq D_Z$ for all $j \in \{1, \dotsc, n+1-m\}$. Let $f$ be supported in $B_n(0,1) \setminus B_n(0,1/2)$ with $f$ a sum over $f_T$ with $T \in \mathbb{T}_{Z(P_1, \dotsc, Z_{P_{n+1-m}})}$. Here $T \in \mathbb{T}_{Z(P_1, \dotsc, Z_{P_{n+1-m}})}$ means that 
\[ T \subseteq \mathcal{N}_{R^{1/2+\delta_m}}(Z(P_1,\dotsc, P_{n+1-m} ) ), \]
and 
\[  \angle(G(\theta), T_zZ ) < R^{-1/2+\delta_m} \qquad \forall  z \in Z \cap \mathcal{N}_{100R^{1/2+\delta_m}}(T),  \] where $G(\theta)$ is the long direction of $T$. 
Let $\frac{n+1}{2} < \alpha < n$ and let $Y$ be a disjoint union of radius 1 balls in $B_{n+1}(0,R)$, such that for any $1 \leq r \leq R$, there are at most $\gamma r^{\alpha}$ many 1-balls from $Y$ intersecting any ball of radius $r$. Then 
\[ \|Ef\|_{BL^{r_m}_{n-1,A} (Y)} \leq C \gamma^{b_m} R^{m \epsilon} R^{\delta\left( \log \overline{A} - \log A \right)} R^{-a_m}\|f\|_2, \]
where $C= C(\alpha,D_Z,K,\epsilon,m)$,
\[ b_m = \begin{cases} \frac{1}{r_m} - \frac{1}{p_{dec}} & \frac{n+1}{2} < \alpha \leq n-1 \text{ and } m \in \{n,n+1\} \\
\frac{1}{r_m} - \frac{1}{r_{n-1}} & n-1 \leq \alpha < n \text{ or } m=n-1. 
\end{cases} \]
where $\frac{1}{2} - \frac{1}{p_{dec}} = \frac{1}{n-2}$, $r_m$ is defined for $m \in \{n-1,n\}$ by
\[ \frac{1}{2} - \frac{1}{r_m} =   \frac{1}{2m} \qquad  m \in \{n-1,n\}, \]
and 
\begin{equation} \label{rnplusoneformula} \frac{1}{2} - \frac{1}{r_{n+1}}=  \begin{cases} \frac{2\alpha-1}{4\alpha n} & \frac{n+1}{2} < \alpha \leq   \frac{n^2-5}{2(n-2)} \\
\frac{\alpha(n^2-3n+1) - n^2+2n+1}{2\alpha n(n-1)(n-3)} &   \frac{n^2-5}{2(n-2)} < \alpha < n-1 \\
\frac{\alpha(2n-1)-(n+1)}{4\alpha n (n-1) } & n-1 \leq \alpha < n, \\
\end{cases} \end{equation}
and $a_m$ is defined for $m \in \{n-1,n+1\}$ by
\[ a_m =   \frac{1}{2}\left( \frac{1}{2} - \frac{1}{r_m} \right)\left( n+1-m \right) \qquad m \in \{n-1,n+1\},\] 
and 
\begin{equation} \label{andefn} a_n =  \begin{cases} \frac{1}{4n} & \frac{n+1}{2} < \alpha \leq   \frac{n^2-5}{2(n-2)} \\
\frac{n^2 -  \alpha(n-2) - 2n-1}{2n(n-1)(n-3)} &  \frac{n^2-5}{2(n-2)} < \alpha \leq n-1 \\
\frac{n+1-\alpha}{4n(n-1)} & n-1 \leq \alpha < n. \end{cases} \end{equation}
\end{theorem}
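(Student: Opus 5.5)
We will prove Theorem~\ref{broadweighted} by induction on $m\in\{n-1,n,n+1\}$ and, for each fixed $m$, by induction on the scale $R$ and on the parameter $A$ (the factor $R^{\delta(\log\overline{A}-\log A)}$ absorbs the losses from splitting $A$ when passing to subsets and to the cellular pieces). The argument follows the polynomial partitioning scheme of \cite{guth,ouwang}. The base case $m=n-1$ will come from the $(n-1)$-linear structure: on each $K^2$-ball the broad norm is controlled by an $(n-1)$-linear product of $Ef_{\tau_j}$ with $K^{-O(1)}$-transverse normals. Since all wave packets are tangent to an $(n-1)$-dimensional variety $Z$, a wave-packet-scale version of the $(n-1)$-linear restriction/Kakeya estimate applies inside $\mathcal N_{R^{1/2+\delta}}(Z)$. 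This gives the unweighted bound with $r_{n-1}=\frac{2(n-1)}{n-2}$ and $a_{n-1}=\frac{1}{2}(\frac12-\frac1{r_{n-1}})\cdot 2$. In this case the weight enters only through the trivial factor $\gamma^{0}$, which is consistent with $b_{n-1}=0$.

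For $m\in\{n,n+1\}$, I will apply polynomial partitioning of degree $D=R^{\delta_{m-1}}$ to the measure $\|Ef\|^{r_m}_{BL^{r_m}_{n-1,A}(Y\cap\cdot)}$ restricted to $\mathcal N_{R^{1/2+\delta_m}}(Z)$. This yields the usual three cases.

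\emph{Cellular case.} Each cell $O$ sees $\lesssim D^{-1}$ of the wave packets, and the weight hypothesis on $Y\cap O$ is inherited with the same $\gamma$. Induction on $R$ closes provided $r_m$ exceeds the exponent at which $D^{(m-1)(\frac1{r_m}-\frac12)+\dots}$ balances. This is the standard condition and holds for the stated $r_m$ because they are all at least the $L^2$-critical threshold for the cellular count.

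\emph{Transverse case.} Transverse wave packets are distributed over $R^{1-\delta}$-balls with bounded multiplicity $D^{O(1)}$, by the transverse-intersection lemma for the cone. We then induct on scale $R^{1-\delta}$, noting that the weight hypothesis restricted to a smaller ball still holds with the same $\gamma$.

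\emph{Tangential case.} Wave packets tangent to an $(m-1)$-dimensional transverse complete intersection are handled by the $(m-1)$ case at scale $R$ (after a standard reduction to $R^{1-\delta}$-balls). The issue is converting the $L^{r_{m-1}}$ bound to $L^{r_m}$ with the weight. When $n-1\le\alpha<n$, I use Hölder's inequality:
\[\|Ef\|_{BL^{r_m}(Y\cap \mathcal N)}\le |Y\cap\mathcal N_{R^{1/2+\delta}}(Z')|^{\frac1{r_m}-\frac1{r_{m-1}}}\|Ef\|_{BL^{r_{m-1}}}.\]
Then I count unit balls of $Y$ inside the neighbourhood of the $(m-1)$-variety. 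For $m=n$, I cover $\mathcal N_{R^{1/2}}(Z')\cap B_R$ by Wongkew's theorem \cite{wongkew} with $\lesssim R^{(n-1)/2}$ balls of radius $R^{1/2}$ and use $\gamma R^{\alpha/2}$ per ball, giving $\lesssim\gamma R^{(n-1+\alpha)/2}$. For $m=n+1$, I use only non-concentration, giving $\gamma R^{\alpha}$. Inserting these counts together with $a_{m-1}$ and requiring the exponents to match $a_m$ is what determines $r_n,a_n$ and $r_{n+1}$ in the third lines of \eqref{rnplusoneformula} and \eqref{andefn}. The $\gamma$ powers accumulate to $\frac1{r_m}-\frac1{r_{n-1}}$.

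When $\alpha<n-1$, I replace the Hölder step by the multilinear refined Strichartz estimate (Theorem~\ref{multilinearstrichartz}) combined with $m$-narrow refined decoupling (Theorem~\ref{refineddecoupling}) at exponent $p_{dec}$, with $\frac12-\frac1{p_{dec}}=\frac1{n-2}$. The count $N$ of $R^{1/2}$-balls in $Y$ is bounded by $\gamma R^{\alpha/2}$-type estimates, and the resulting bound is interpolated with the $(m-1)$ bound. This produces the remaining cases and $b_m=\frac1{r_m}-\frac1{p_{dec}}$.

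The main obstacle is the tangential case with the weight. I need to check that the Hölder/Wongkew count, or the refined Strichartz interpolation, reproduces exactly the numerology of $a_n$ and $r_{n+1}$ in each $\alpha$-range while remaining compatible with the cellular-case constraint on $r_m$. I will verify the exponent identities at the breakpoints $\alpha=\frac{n^2-5}{2(n-2)}$ and $\alpha=n-1$ first, since the formulas must agree there.
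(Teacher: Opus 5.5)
Your overall architecture matches the paper: polynomial partitioning with induction on $m$, $R$ and $A$, and in the tangential case broad-norm Hölder plus the Wongkew count $\gamma R^{(n-1+\alpha)/2}$ for $m=n$, or the count $\gamma R^{\alpha}$ for $m=n+1$, when $n-1\le\alpha<n$. Your exponent bookkeeping in that range is correct. The genuine gap is that nothing in your plan produces or preserves the decay factor $R^{-a_m}$.

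\textbf{The transverse case.} You induct at scale $\rho\approx R^{1-\delta}$ using only bounded multiplicity. The inductive bound then carries $\rho^{-a_m}=(R/\rho)^{a_m}R^{-a_m}$, and the only compensation is $(\rho/R)^{m\epsilon}$, so for $m\le n$ (where $a_m\gg\epsilon$) the induction does not close. There is a second problem: the packets of $f_{j,tr}$ live in the $R^{1/2+\delta_m}=\rho^{1/2+\delta_{m-1}}$-neighbourhood of $Z$, so the $m$-dimensional hypothesis at scale $\rho$ does not apply to $f_{j,tr}$ directly. The paper handles both issues as follows.
\begin{itemize}
\item It covers $\mathcal{N}_{R^{1/2+\delta_m}}(Z)\cap B_j$ by translates $\mathcal{N}_{\rho^{1/2+\delta_m}}(Z)+b$ with $R^{O(\delta)}$ overlap.
\item It re-decomposes $f_{j,tr}$ at scale $\rho$ into pieces $f_{j,tr,b}$ tangent to $Z+b$.
\item It combines $\sum_b\|f_{j,tr,b}\|_2^2\lesssim R^{O(\delta)}\|f_{j,tr}\|_2^2$ with the transverse equidistribution estimate $\sup_b\|f_{j,tr,b}\|_2^2\lesssim R^{O(\delta_m)}(\rho/R)^{(n+1-m)/2}\|f_{j,tr}\|_2^2$.
\end{itemize}
This gains $(\rho/R)^{\frac{n+1-m}{2}(\frac12-\frac1r)}$, which offsets the loss exactly when $a_m\le\frac{n+1-m}{2}(\frac12-\frac1{r_m})$. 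For $m=n$ that condition is the cap $a_n\le\frac1{4n}$. For $\alpha\le n-1$ the tangential computation actually gives decay $\frac{n^2-\alpha(n-2)-2n-1}{2n(n-1)(n-3)}$. Truncating it at $\frac1{4n}$ is what creates the first branch of $a_n$ and of $r_{n+1}$, and the breakpoint $\alpha=\frac{n^2-5}{2(n-2)}$. Your plan has no source for that branch, and checking continuity at breakpoints will not recover it.

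\textbf{The base case $m=n-1$.} The $(n-1)$-linear restriction and Kakeya estimates in $\mathbb{R}^{n+1}$ give no power gain at $L^{2(n-1)/(n-2)}$ and do not see the variety. So they do not give $R^{-a_{n-1}}=R^{-\frac{1}{2(n-1)}}$ for a curved $Z$; you can verify the gain by hand only when $Z$ is flat. In the paper, $m=n-1$ goes through the same partitioning induction.
\begin{itemize}
\item The cellular step works because $r=r_{n-1}+\delta>\frac{2(n-1)}{n-2}$. The same $+\delta$ is needed at $m=n$, since $r_n=\frac{2n}{n-1}$ sits exactly at the cellular threshold; ``at least the threshold'' does not suffice.
\item The tangential piece is negligible, because its packets lie within $K^{-2}$ of the $(n-2)$-plane $T_zZ$ and the norm is $(n-1)$-broad.
\item The factor $R^{-a_{n-1}}$ comes from transverse equidistribution.
\end{itemize}

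\textbf{The case $\alpha<n-1$.} Your tangential step here is misdescribed. For $m=n$ the packets are only $(n-1)$-narrow, so Theorem~\ref{multilinearstrichartz} is applied with $k=n-1$ at $q=\frac{2(n-1)}{n-3}$, not at $p_{dec}$. The argument then proceeds as follows.
\begin{itemize}
\item One uses only the per-$R^{1/2}$-ball count $\gamma R^{\alpha/2}$, not a bound on $N$.
\item One chooses $\frac12-\frac1{p_{n-1}}=\frac1{(n-1)^2}$ so that the powers of $N$ cancel.
\item One interpolates with the $(n-1)$ case by broad-norm Hölder with $\theta=\frac{n^2-4n+1}{n(n-3)}$.
\end{itemize}
The exponent $p_{dec}$ enters only through the $\gamma$-bookkeeping $(\frac1{p_{n-1}}-\frac1q)(1-\theta)\le\frac1{r_n}-\frac1{p_{dec}}$. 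For $m=n+1$ the plain Hölder step with the count $\gamma R^{\alpha}$ is still used.
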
  
\begin{proof} By Hölder's inequality for the broad norms (see \cite{guth}), it will suffice to prove the theorem with $r_m$ replaced by $r=r_m+\delta$. It will be shown that the inequality holds via induction on $R \geq 1$. The case $R \leq 100$ is trivial, so let $R \geq 100$ and assume inductively that the theorem holds for all radii smaller than $R/2$. Induction is also done on $A$ and $m$; for $A \lesssim 1$ the result is trivial since $\overline{A}$ can be chosen very large depending on $\delta$. Therefore, given $R \geq 1$ and $1 \leq A \leq \overline{A}$, it may be assumed that the result holds for all $\widetilde{R} \leq R/2$ and $1 \leq \widetilde{A} \leq A/2$, and $\widetilde{m} < m$. Under these assumptions it will be shown for the listed parameters, and as explained above it may be assumed that $A \geq 4$. It may be assumed that $K \leq R^{\delta}$.  

Let $D$ be a large integer to be chosen later, depending on $\epsilon$ and with $D \geq (n+1)D_Z$. As in \cite{guth}, the algebraic case is when there is a transverse complete intersection $Z_{m-1} =Z(P_1, \dotsc, P_{n+1-m}, P) \subseteq Z(P_1, \dotsc, P_{n+1-m})$ of dimension $m-1$ and degree at most $D$, such that 
\[ \|Ef\|_{BL^{r}_{n-1,A} (Y)} \lesssim \|Ef\|_{BL^{r}_{n-1,A}\left(Y \cap \mathcal{N}_{R^{1/2+ \delta_m}}\left(Z_{m-1}\right)\right)}. \]
The other case is the cellular case (also called non-algebraic case).

In the cellular case, by the same argument as in \cite[Section~8.1]{guth}, there is a polynomial $P$ on $\mathbb{R}^{n+1}$, 
where $P$ has degree at most $D$, such that $\mathbb{R}^n \setminus Z(P) = \bigcup_{i \in \mathcal{I}} O_i$ where $\left\lvert \mathcal{I} \right\rvert \sim D^m$ and the $O_i$ are disjoint connected open sets, such that for at least $90\%$ of the $i$'s,
\begin{equation} \label{cellcase} \|Ef\|_{BL^{r}_{{n-1},A} (Y)}^{r} \sim D^{m} \|Ef\|_{BL^{r}_{{n-1},A}\left(Y \cap \widetilde{O}_i\right)}^{r}, \end{equation}
where 
\[ \widetilde{O}_i = O_i \setminus \mathcal{N}_{R^{1/2+\delta}}(Z(P)). \]
Using the wave packet decomposition from \eqref{wavepacket}, let 
\[ f = \sum_{(\theta, \nu) \in \mathbb{T}} f_{\theta,\nu}, \qquad f_i = \sum_{(\theta, \nu) \in \mathbb{T}_i} f_{\theta,\nu}, \]
where $\mathbb{T}_i$ consists of those pairs $(\theta, \nu) \in \mathbb{T}$ for which $T_{\theta, \nu}$ intersects $\widetilde{O_i}$. By essential orthogonality of wave packets (similarly to~\cite[Eq.~3.1]{guth}),
\begin{multline} \label{manyi} \sum_i \|f_i\|_2^2 \lesssim \sum_i \sum_{(\theta, \nu) \in \mathbb{T}_i} \|f_{\theta, \nu}\|_2^2 = \sum_{(\theta, \nu) \in \mathbb{T}} \sum_{\substack{i \\ T_{\theta, \nu} \cap \widetilde{O}_i} \neq \emptyset } \|f_{\theta, \nu}\|_2^2 \\
\lesssim D  \sum_{(\theta, \nu) \in \mathbb{T}} \|f_{\theta, \nu}\|_2^2  \lesssim D \|f\|_2^2. \end{multline}
The second line used that, for a given plank $T_{\theta,\nu}$, if $T_{\theta, \nu}$ intersects $\widetilde{O}_i$, then since the tubes $T_{\theta,\nu}$ have length $R^{1/2+\delta}$ in the medium direction, the centre line of $T_{\theta, \nu}$ intersects $O_i$. This line can intersect at most $D+1$ different cells $O_i$, since the restriction of $P$ to this line is a polynomial of degree at most $D$, which may be assumed not identically zero since otherwise it would intersect \emph{no} cells $O_i$, and this line intersecting more than $D+1$ cells would imply that this non-constant polynomial of degree at most $D$ has more than $D$ zeroes.

The inequality \eqref{manyi} implies that 
\begin{equation} \label{l2bound} \|f_i\|_2^2 \lesssim D^{1-m} \|f\|_2^2, \end{equation}
for at least $90\%$ of the $i'$s. By \eqref{cellcase}, for $90\%$ of the $i$'s.
\begin{equation} \label{usedveryshortly} \|Ef\|_{BL^{r}_{{n-1},A} (Y)}^{r} \lesssim D^{m} \|Ef_i\|_{BL^{r}_{{n-1},A} (Y)}^{r}, \end{equation}
where the very small error term can be assumed to not dominate, since the conclusion of the theorem holds trivially whenever it dominates. By translation, the conclusion of the theorem in $B_n(0,R)$ automatically implies the same in any ball of radius $R$. Using \eqref{l2bound} and \eqref{usedveryshortly}, covering $Y$ with $\sim 1$ many balls of radius $R/2$ and applying the theorem at the assumed scale $R/2$, there is at least one $i$ such that
\begin{multline*}  \|Ef\|_{BL^{r}_{{n-1},A} (Y)}^{r} \lesssim D^{m} \left(\gamma^{b_m} R^{m \epsilon} R^{\delta\left( \log \overline{A} - \log A \right)} R^{-a_m}\right)^r\|f_i\|_2^{r} \\
\lesssim D^{m-r\left( \frac{m-1}{2} \right)} \left(\gamma^{b_m} R^{m \epsilon} R^{\delta\left( \log \overline{A} - \log A \right)} R^{-a_m}\right)^r\|f\|_2^{r}. \end{multline*}
The definition of $r$ and $r_m$, and the restriction on the range of $\alpha$, imply that $r > \frac{2m}{m-1}$ for all $m \in \{n-1,n,n+1\}$ (when $m=n+1$, it suffices to check that $\frac{1}{2} - \frac{1}{r_{n+1}} > \frac{1}{2(n+1)}$  for $\alpha >\frac{n+1}{2}$, which holds since the right-hand side of \eqref{rnplusoneformula} is increasing in $\alpha$, and equality holds at $\alpha=\frac{n+1}{2}$). Taking $D$ sufficiently large\footnote{the choice of $D$ depends on $\delta$, but $\delta$ (chosen implicitly) depends only on $\epsilon$, which means that $D$ depends only on $\epsilon$ and $D_Z$.} to eliminate the implicit constants finishes the proof in the cellular (non-algebraic) case.

In the algebraic case, there is a polynomial $P$ on $\mathbb{R}^{n+1}$, of degree at most $D$, such that $Z(P_1,\dotsc, P_{n+1-m}, P)$ is a transverse complete intersection of dimension $m-1$, and such that 
\[ \|Ef\|_{BL^{r}_{{n-1},A} (Y)} \lesssim \|Ef\|_{BL^{r}_{{n-1},A}\left(Y \cap \mathcal{N}_{R^{1/2+ \delta_m}}\left(Z\left(P_1, \dotsc, P_{n+1-m}, P\right)\right)\right)}. \]
For simplicity abbreviate $Z = Z\left(P_1, \dotsc, P_{n+1-m}, P\right)$. Define $\rho$ by 
\[ \rho^{\frac{1}{2} + \delta_{m-1}} = R^{\frac{1}{2} + \delta_m}, \]
so that $\rho \ll R$, and cover the ball $B_{n+1}(0,R)$ by balls $\{B_j\}_j$ of radius $\rho$. Then
\begin{equation} \label{jsum} \|Ef\|_{BL^{r}_{{n-1},A}\left(Y \cap \mathcal{N}_{R^{1/2+ \delta_m}}(Z)\right)}^r \leq \sum_j \|Ef\|_{BL^{r}_{{n-1},A}\left(Y \cap B_j \cap \mathcal{N}_{R^{1/2+ \delta_m}}(Z)\right)}^r. \end{equation}
For each $j$, using the triangle inequality for broad norms (\cite[Lemma~4.1]{guth}), write
\begin{multline} \label{triangleapp} \|Ef\|_{BL^{r}_{{n-1},A}\left(Y \cap B_j \cap \mathcal{N}_{R^{1/2+ \delta_m}}(Z)\right)} 
\lesssim  \|Ef_{j,tan}\|_{BL^{r}_{{n-1},A/2}\left(Y \cap B_j \cap \mathcal{N}_{R^{1/2+ \delta_m}}(Z)\right)} \\
+ \|Ef_{j,tr}\|_{BL^{r}_{{n-1},A/2}\left(Y \cap B_j \cap \mathcal{N}_{R^{1/2+ \delta_m}}(Z)\right)}, \end{multline}
where 
\[ f_{j,tan} = \sum_{T \in \mathbb{T}_{j,tan}} f_T, \qquad  f_{j,tr} = \sum_{T \in \mathbb{T}_{j,tr}} f_T,  \]
 and
\[ \mathbb{T}_j = \{ T \in \mathbb{T} : T \cap B_j \cap \mathcal{N}_{R^{1/2+ \delta_m}}(Z)\neq \emptyset \}, \]
where
\begin{multline} \label{angleconditionweak} \mathbb{T}_{j,tan} =  \{ T \in \mathbb{T}_j :T \cap B_j \subseteq \mathcal{N}_{R^{1/2+\delta_{m}}(Z)}, \\
 \angle(T, T_zZ ) < \rho^{-1/2+\delta_{m-1}}  \forall  z \in Z \cap B_j \cap \mathcal{N}_{100R^{1/2+\delta_m}}(T) \},\end{multline}
and 
\[ \mathbb{T}_{j,tr} = \mathbb{T}_j \setminus \mathbb{T}_{j,tan}. \]

The negligible error term missing from \eqref{triangleapp} was omitted as it may be assumed to not dominate. By \eqref{jsum} and \eqref{triangleapp}, either 
\begin{equation} \label{jsumtan} \|Ef\|_{BL^{r}_{{n-1},A}\left(Y \cap \mathcal{N}_{R^{1/2+ \delta_m}}(Z)\right)}^r \leq \sum_j \|Ef_{j,tan}\|_{BL^{r}_{{n-1},A/2}\left(Y \cap B_j \cap \mathcal{N}_{R^{1/2+ \delta_m}}(Z)\right)}^r, \end{equation}
or 
\begin{equation} \label{jsumtr} \|Ef\|_{BL^{r}_{{n-1},A}\left(Y \cap \mathcal{N}_{R^{1/2+ \delta_m}}(Z)\right)}^r \leq \sum_j \|Ef_{j,tr}\|_{BL^{r}_{{n-1},A/2}\left(Y \cap B_j \cap \mathcal{N}_{R^{1/2+ \delta_m}}(Z)\right)}^r. \end{equation}
The tangential sub-case is where \eqref{jsumtan} holds, and the transverse sub-case is where \eqref{jsumtr} holds. 

Consider the tangential sub-case,  
and assume first that $m=n-1$. For any $j$ and any $1$-ball $Q \subseteq Y$ with $Q \cap B_j \cap \mathcal{N}_{R^{1/2+ \delta_m}}(Z) \neq \emptyset$,  there is $z \in Z \cap 2B_j$ with $\dist(z,Q)  < R^{1/2+\delta_m}$. Therefore, for any $T \in \mathbb{T}_{j,tang}$ with $T \cap Q \neq \emptyset$,  $z \in \mathcal{N}_{100R^{1/2+\delta_m}}(T) \cap 2B_j \cap Z$, and therefore $\angle (T, T_zZ) \leq \rho^{-1/2+\delta_{m-1}}$ by \eqref{angleconditionweak}. If $V$ is defined to be the $(m-1)$-plane $T_zZ$, and if $R$ is assumed so large that $\rho^{-1/2+\delta_{m-1}} < K^{-2}$, this implies that
\[ \sup_{\tau : d(G(\tau), V) \geq K^{-2} } \int_Q \left\lvert E(f_{j,tan})_{\tau} \right\rvert^{r} \lesssim R^{-10000}. \]
Since $m={n-1}$, it follows that 
\[ \inf_{V \in G(n-2,n+1) } \sup_{\tau : \dist(G(\tau), V) \geq K^{-2} } \int_Q \left\lvert E(f_{j,tan})_{\tau} \right\rvert^{r} \leq CR^{-10000}. \]
Therefore, by definition of the broad norm (note $A \geq 2$), this shows that 
\[ \|Ef_{j,tan}\|_{BL^{r}_{{n-1},A/2}\left(Y \cap B_j \cap \mathcal{N}_{R^{1/2+ \delta_m}}(Z)\right)}^r \]is negligible when $m={n-1}$, and this finishes the proof of the tangential sub-case when $m={n-1}$. 

Now consider the case that either $m=n$ or $m=n+1$ (still in the tangential sub-case). 

If $n-1 \leq \alpha < n$, 
then by Hölder's inequality for the broad norm (see \cite[Lemma~4.2]{guth} with\footnote{Strictly speaking, this doesn't follow from \cite[Lemma~4.2]{guth} directly, since the two functions that Hölder's inequality is applied to in \cite{guth} are both powers of $|Ef|$. But if one modifies the proof in \cite{guth} so that the first application of Hölder's inequality is applied to $|Ef_{\tau}|$ and $\chi_{U \cap B_{K^2}}$, the version of Hölder's inequality claimed above still holds.} $A_1 = 0$, $A_2=A/2$), 
\begin{multline} \label{holderbroad} \|Ef_{j,tan}\|_{BL^{r_m}_{{n-1},A/2}\left(Y \cap B_j \cap \mathcal{N}_{R^{1/2+ \delta_m}}(Z)\right)} \lesssim \\
\left\lvert Y \cap B_j \cap \mathcal{N}_{R^{1/2+ \delta_m}}(Z) \right\rvert^{\frac{1}{r_{m}} - \frac{1}{r_{m-1}} } \|Ef_{j,tan}\|_{BL^{r_{m-1}}_{{n-1},A/2}\left(Y \cap B_j\right)}. \end{multline}
The above used that $r_{n-1} > r_n > r_{n+1}$. To see that $r_n > r_{n+1}$, it is enough to check $\frac{1}{2}-\frac{1}{r_{n+1}} < \frac{1}{2} - \frac{1}{r_n}$,  which holds since substituting $\alpha=n+1$ into the third formula in the right-hand side of \eqref{rnplusoneformula} gives $\frac{1}{2n} = \frac{1}{2} - \frac{1}{r_n}$, and the  right-hand side of \eqref{rnplusoneformula} is increasing in $\alpha$.

\begin{sloppypar}By Wongkew's theorem \cite{wongkew}, $\mathcal{N}_{R^{1/2+\delta_m}}(Z) \cap B_j$ can be covered by $D^{O(1)} R^{(m-1)(1/2-\delta_m)}$ many balls of radius $R^{1/2+\delta_m}$, and each of these balls intersects $\lesssim \gamma R^{(1/2+\delta_m)\alpha}$ many radius 1 balls from $Y$. Since $Y$ also has $\lesssim \gamma R^{\alpha}$ many radius 1 balls in $B_{n+1}(0,R)$, this yields 
\begin{equation} \label{wongkewfractal} \left\lvert Y \cap B_j \cap \mathcal{N}_{R^{1/2+ \delta_m}}(Z) \right\rvert \lesssim D^{O(1)} R^{2\delta_m } \gamma  \min\left\{  R^{\frac{m-1+\alpha}{2} }, R^{\alpha} \right\}. \end{equation}
If $m=n$, then the first term is the minimum by the assumption $\alpha >n-1$. The second term is the minimum when $m=n+1$, since  $\alpha <n$. This bounds the first factor in \eqref{holderbroad}.

To bound the second factor in \eqref{holderbroad},  $f_{j,tan}$ satisfies the hypothesis at dimension $m-1$ and scale $\rho$ by same reasoning as in \cite{ouwang}. By the inductive assumption on $m$, 
\begin{multline} \label{inductivetang} \|Ef_{j,tan}\|_{BL^{r_{m-1}}_{{n-1},A/2}\left(Y \cap B_j\right)} \\
\lesssim \gamma^{\frac{1}{r_{m-1}}-\frac{1}{r_{n-1}}} R^{(m-0.9) \epsilon} R^{\delta\left( \log \overline{A} - \log (A/2) \right)} R^{-a_{m-1}}\|f_{j,tan}\|_2. \end{multline}
Applying \eqref{wongkewfractal} and \eqref{inductivetang} to each factor in \eqref{holderbroad} gives
\begin{multline} \label{tangbound} \|Ef_{j,tan}\|_{BL^{r_m}_{{n-1},A/2}\left(Y \cap B_j \cap \mathcal{N}_{R^{1/2+ \delta_m}}(Z)\right)} \\
\lesssim \left( D^{O(1)} R^{2\delta_m} \gamma  \min\left\{  R^{\frac{m-1+\alpha}{2} }, R^{\alpha} \right\} \right)^{\frac{1}{r_{m}} - \frac{1}{r_{m-1}} } \\
\times \gamma^{\frac{1}{r_{m-1}}-\frac{1}{r_{{n-1}}}} R^{(m-0.9) \epsilon} R^{\delta\left( \log \overline{A} - \log (A/2) \right)} R^{-a_{m-1}}\|f_{j,tan}\|_2 \\
\leq D^{O(1) } R^{(m-0.8)\epsilon} \gamma^{\frac{1}{r_m}-\frac{1}{r_{n-1}} } \\
\times R^{\delta(\log \overline{A} - \log A )} R^{-a_{m-1} + \left( \frac{1}{r_{m}} - \frac{1}{r_{m-1}} \right)\min\left\{\frac{m-1+\alpha}{2} , \alpha \right\} } \|f_{j, tan}\|_2.  \end{multline}
It will be shown that the exponent of $R$ satisfies 
\begin{equation} \label{tobeshown} -a_{m-1} + \left( \frac{1}{r_{m}} - \frac{1}{r_{m-1}} \right)\min\left\{\frac{m-1+\alpha}{2} , \alpha \right\}  = -a_{m}. \end{equation}
When $m=n+1$ this will be due to the definition of $r_{n+1}$, and when $m=n$ it will be due to the definition of $a_n$. If $m=n$, then since $a_{n-1} =  \frac{1}{2}\left( \frac{1}{2} - \frac{1}{r_{n-1}} \right)\left( (n+1)-(n-1) \right)$,  $\frac{1}{2}-\frac{1}{r_{n-1}} = \frac{1}{2(n-1)}$, and $\frac{1}{2} - \frac{1}{r_n} = \frac{1}{2n}$, using $n-1 \leq \alpha <n$ gives that \eqref{tobeshown} is equivalent to $a_n = \frac{n+1-\alpha}{4n(n-1)}$ when $\alpha \geq n-1$, and this follows from the definition of $a_n$ in \eqref{andefn}.  If $m=n+1$, then since $a_n = \frac{n+1-\alpha}{4n(n-1)}$, $a_{n+1}=0$, and $\frac{1}{2}-\frac{1}{r_n}=\frac{1}{2n}$, by the assumption $n-1 \leq \alpha < n$,  \eqref{tobeshown} is equivalent to $\frac{1}{2} - \frac{1}{r_{n+1}} = \frac{\alpha(2n-1)-(n+1)}{4\alpha n (n-1) }$, which holds by the definition of $r_{n+1}$ in \eqref{rnplusoneformula}. Therefore, \eqref{tangbound} becomes
\begin{multline*} \|Ef_{j,tan}\|_{BL^{r_m}_{{n-1},A/2}\left(Y \cap B_j \cap \mathcal{N}_{R^{1/2+ \delta_m}}(Z)\right)} \\
\\
\lesssim  D^{O(1) } R^{(m-0.7)\epsilon} \gamma^{\frac{1}{r_m}-\frac{1}{r_{n-1}} }  R^{\delta(\log \overline{A} - \log A )} R^{-a_m} \|f_{j, tan}\|_2. \end{multline*}
Substituting into \eqref{jsumtan}, using $R/\rho \ll R^{0.1 \epsilon}$, gives 
\[ \|Ef\|_{BL^{r}_{{n-1},A} (Y \cap \mathcal{N}_{R^{1/2+ \delta_m}}(Z))} \lesssim  D^{O(1) } R^{(m-0.6)\epsilon} \gamma^{\frac{1}{r_m}-\frac{1}{r_{n-1}} }  R^{\delta(\log \overline{A} - \log A )} R^{-a_m} \|f\|_2. \]
This finishes the proof of the tangential sub-case when $n-1 \leq \alpha < n$. \end{sloppypar} 

If $(n+1)/2 < \alpha \leq n-1$ (still in the tangential sub-case) and $m=n$, let $q = \frac{2(n-1)}{n-3}$, and define $p_{n-1}$ by 
\begin{equation} \label{pexponent} \frac{1}{2}-\frac{1}{p_{n-1}} = \frac{1}{n-1} \left( \frac{1}{2} - \frac{1}{q}\right) =  \frac{1}{(n-1)^2};\end{equation}
the reason for choosing these exponents will be clear in a moment. Suppose that $m=n$. Write $g = f_{j,tan}$. By the definition of the broad norm, and some pigeonholing, there exist $K^{-2}$-transversely supported $\{g_1, \dotsc, g_{n-1}\}$, where each $g_k$ is a restriction of $g$ to some $\tau_k$, such that 
\begin{multline} \label{broadmultilinear} \|Eg\|_{BL^{p_{n-1}}_{{n-1},A/4}\left(Y \cap B_j \cap \mathcal{N}_{R^{1/2+ \delta_m}}(Z)\right)} \\\lesssim K^{O(1)} \left\lVert \prod_{k=1}^{n-1} \left\lvert Eg_k \right\rvert^{\frac{1}{n-1}} \right\rVert_{L^{p_{n-1}}\left(Y \cap B_j \cap \mathcal{N}_{R^{1/2+ \delta_m}}(Z)\right)}. \end{multline}

After dyadically pigeonholing the right-hand side of \eqref{broadmultilinear} into $R^{1/2}$-balls according to $\|Eg_1\|_{L^q(\widetilde{Q})}$, then according to $\|Eg_2\|_{L^q(\widetilde{Q})}$, and so on without relabelling, and by losing a factor at most $\log(R)^{(n-1)/p_{n-1}}$ in \eqref{broadmultilinear}, it may be assumed that the unit balls in $Y$ are arranged in $N$ many $R^{1/2}$-balls $\widetilde{Q}$, such that for any $k$, $\|Eg_k\|_{L^q(\widetilde{Q})}$ is constant over $\widetilde{Q}$ up to a factor of 2. By \eqref{broadmultilinear}, Hölder's inequality, and then the $(n-1)$-narrow version of Theorem~\ref{multilinearstrichartz} with $k=n-1$ and $V=V(Q)$ the tangent space to the $(n-1)$-variety $Z$ at $Q$,
\begin{multline*} \|Eg\|_{BL^{p_{n-1}}_{{n-1},A/4}\left(Y \cap B_j \cap \mathcal{N}_{R^{1/2+ \delta_m}}(Z)\right)} \\
\lesssim K^{O(1)} R^{\epsilon/2}\left( N \gamma R^{\alpha/2} \right)^{\frac{1}{p_{n-1}}-\frac{1}{q}} \left\lVert \prod_{k=1}^{n-1} |Eg_k|^{\frac{1}{n-1}} \right\rVert_{L^q(Y \cap B_j)} \\
\lesssim  R^{\epsilon} \left( N \gamma R^{\alpha/2} \right)^{\frac{1}{p_{n-1}}-\frac{1}{q}} R^{-\left(\frac{1}{2}-\frac{1}{q} \right)} N^{-\left( \frac{1}{2} - \frac{1}{q} \right) \left( \frac{n-2}{n-1} \right)} \prod_{k=1}^{n-1} \|g_k\|_2^{\frac{1}{n-1}}. \end{multline*} 
The exponent $p_{n-1}$ was chosen in \eqref{pexponent} make the power of $N$ cancel, so this becomes 
\begin{multline} \label{factoronebound} \|Ef_{j,tan}\|_{BL^{p_{n-1}}_{{n-1},A/4}\left(Y \cap B_j \cap \mathcal{N}_{R^{1/2+ \delta_m}}(Z)\right)} \\
\lesssim R^{\epsilon} \gamma^{\frac{1}{p_{n-1}} - \frac{1}{q}} R^{\frac{\alpha}{2}\left(\frac{1}{p_{n-1}}-\frac{1}{q}\right)} R^{-\left(\frac{1}{2}-\frac{1}{q} \right)}  \|f_{j,tan} \|_2. \end{multline}
Define $\theta \in (0,1)$ by 
\[ \theta = \frac{n^2-4n+1}{n(n-3)}, \qquad 1-\theta =\frac{n-1}{n(n-3)}, \]
so that 
\[ \frac{1}{2} - \frac{1}{r_n} = (1-\theta) \left( \frac{1}{2} - \frac{1}{p_{n-1}} \right) + \theta\left( \frac{1}{2} - \frac{1}{r_{n-1}}\right), \]
and
\[ \frac{1}{r_n} = \frac{1-\theta}{p_{n-1}}  + \frac{\theta}{r_{n-1}}. \]
Then by Hölder's inequality for the broad norm \cite[Lemma~4.2]{guth}, followed by applying \eqref{factoronebound} to the first factor and the $(n-1)$-variety case to the second factor,
\begin{multline*} \|Ef_{j,tan}\|_{BL^{r_{n}}_{{n-1},A/2}\left(Y \cap B_j \cap \mathcal{N}_{R^{1/2+ \delta_m}}(Z)\right)} \\
\lesssim  \|Ef_{j,tan}\|_{BL^{p_{n-1}}_{{n-1},A/4}\left(Y \cap B_j \cap \mathcal{N}_{R^{1/2+ \delta_m}}(Z)\right)}^{1-\theta} \|Ef_{j,tan}\|_{BL^{r_{n-1}}_{{n-1},A/4}\left(Y \cap B_j \cap \mathcal{N}_{R^{1/2+ \delta_m}}(Z)\right)}^{\theta} \\
 \lesssim \gamma^{\left(\frac{1}{p_{n-1}} - \frac{1}{q}\right)(1-\theta)} R^{(n-1)\epsilon} \times \\
 R^{(1-\theta)\left[ \frac{\alpha}{2} \left( \left(\frac{1}{2}-\frac{1}{q}\right) - \left(\frac{1}{2} -\frac{1}{p_{n-1}}\right) \right) - \left( \frac{1}{2} - \frac{1}{q} \right) \right]-\theta a_{n-1} } \|f\|_2.\end{multline*}
 After some simplification of the exponent of $R$, using that $a_{n-1} = \frac{1}{2(n-1)}$, this becomes
\begin{multline} \label{nearlydone} \|Ef_{j,tan}\|_{BL^{r_{n}}_{{n-1},A/2}\left(Y \cap B_j \cap \mathcal{N}_{R^{1/2+ \delta_m}}(Z)\right)} \\
 \lesssim  \gamma^{\left(\frac{1}{p_{n-1}} - \frac{1}{q}\right)(1-\theta)} R^{(n-1)\epsilon+\delta \log \left(\overline{A}/A\right)}  R^{\frac{ \alpha(n-2) + 2n+1-n^2}{2n(n-1)(n-3)}} \|f\|_2.\end{multline}
 The power of $\gamma$ in \eqref{nearlydone} satisfies
 \begin{multline} \label{gammapower} \left(\frac{1}{p_{n-1}} - \frac{1}{q}\right)(1-\theta) = \frac{n-1}{n(n-3)} \left( \frac{1}{n-1} - \frac{1}{(n-1)^2} \right) \\
 \leq \frac{1}{n-2} - \frac{1}{2n} =  \frac{1}{r_n} - \frac{1}{p_{dec}}, \end{multline}
the inequality step in \eqref{gammapower} being equivalent to $n(n-1)(n-3) \geq 2$, which holds since $n \geq 4$. Therefore, \eqref{nearlydone} becomes
\begin{multline} \label{tbs} \|Ef_{j,tan}\|_{BL^{r_{n}}_{{n-1},A/2}\left(Y \cap B_j \cap \mathcal{N}_{R^{1/2+ \delta_m}}(Z)\right)} \\
 \lesssim  \gamma^{\frac{1}{r_n} - \frac{1}{p_{dec}}} R^{(n-1)\epsilon+\delta \log \left(\overline{A}/A\right)}  R^{\frac{ \alpha(n-2) + 2n+1-n^2}{2n(n-1)(n-3)}} \|f\|_2. \end{multline}
When $(n+1)/2 < \alpha \leq n-1$, the definition of $a_n$ in \eqref{andefn} is equivalent to 
 \begin{multline*} a_n = \min\left\{ \frac{n^2 -  \alpha(n-2) - 2n-1}{2n(n-1)(n-3)}, \frac{1}{4n} \right\} \\
= \begin{cases} 
\frac{1}{4n} & \frac{n+1}{2} < \alpha \leq   \frac{n^2-5}{2(n-2)} \\
\frac{n^2 -  \alpha(n-2) - 2n-1}{2n(n-1)(n-3)} &  \frac{n^2-5}{2(n-2)} < \alpha \leq n-1,  \\\end{cases} \end{multline*} 
so \eqref{tbs} implies that
\[\|Ef_{j,tan}\|_{BL^{r_{n}}_{{n-1},A/2}\left(Y \cap B_j \cap \mathcal{N}_{R^{1/2+ \delta_m}}(Z)\right)} \\
 \lesssim  \gamma^{\frac{1}{r_n} - \frac{1}{p_{dec}}} R^{(n-1)\epsilon+\delta \log \left(\overline{A}/A\right)}  R^{-a_m} \|f\|_2. \]
 Substituting into \eqref{jsumtan} verifies the tangential sub-case when $m=n$ and $(n+1)/2 < \alpha \leq n-1$.

If $m=n+1$ and $(n+1)/2 < \alpha \leq n-1$, by Hölder's inequality for the broad norm (as in \eqref{tangbound}) and by taking the $\alpha$ term in the minimum in \eqref{tangbound},  
\begin{multline} \label{tbs2} \|Ef_{j,tan}\|_{BL^{r_m}_{{n-1},A/2}\left(Y \cap B_j \cap \mathcal{N}_{R^{1/2+ \delta_m}}(Z)\right)} \\
\lesssim  D^{O(1) } R^{(m-0.9)\epsilon} \gamma^{\frac{1}{r_m}-\frac{1}{p_{dec}} }
 R^{\delta(\log \overline{A} - \log A )} R^{-a_{m-1} + \left( \frac{1}{r_{m}} - \frac{1}{r_{m-1}} \right)\alpha} \|f_{j, tan}\|_2.  \end{multline}
The non-infinitesimal exponent of $R$ above is zero since $\frac{1}{2} - \frac{1}{r_n} = \frac{1}{2n}$ and the definitions of $r_{n+1}$ and $a_n$ in \eqref{rnplusoneformula} and \eqref{andefn} imply that
 \[ \frac{1}{2} - \frac{1}{r_{n+1}} = \frac{ \frac{\alpha}{2n} - a_n }{\alpha}. \]
 Therefore, \eqref{tbs2} becomes
 \begin{multline*} \|Ef_{j,tan}\|_{BL^{r_m}_{{n-1},A/2}\left(Y \cap B_j \cap \mathcal{N}_{R^{1/2+ \delta_m}}(Z)\right)} \\
\lesssim  D^{O(1) } R^{(m-0.9)\epsilon} \gamma^{\frac{1}{r_m}-\frac{1}{p_{dec}} }
 R^{\delta(\log \overline{A} - \log A )} \|f_{j, tan}\|_2.  \end{multline*}
Substituting into \eqref{jsumtan} proves the last case in the tangential sub-case.

Now consider the transverse sub-case, where \eqref{jsumtr} holds. For fixed $j$, to bound
\[  \|Ef_{j,tr}\|_{BL^{r}_{{n-1},A/2}\left(Y \cap B_j \cap \mathcal{N}_{R^{1/2+ \delta_m}}(Z)\right)}^r, \] 
it will suffice to bound
\[  \|Ef_{j,tr}\|_{BL^{r}_{{n-1},A/2}\left(Y \cap B_j \cap \mathcal{N}_{R^{1/2+ \delta_m}}(Z(P_1, \dotsc, P_{n+1-m}))\right)}^r. \] 
Let $\mathcal{B}$ be a boundedly overlapping cover of $B_j \cap \mathcal{N}_{R^{1/2+ \delta_m}}(Z(P_1, \dotsc, P_{n+1-m}))$ by balls of radius $R^{1/2+\delta_m}$. By pigeonholing there is a fixed dyadic number $s$ and a subset $\mathcal{B}_s \subseteq \mathcal{B}$ such that 
\begin{multline*} \|Ef_{j,tr}\|_{BL^{r}_{{n-1},A/2}\left(Y \cap B_j \cap \mathcal{N}_{R^{1/2+ \delta_m}}(Z(P_1, \dotsc, P_{n+1-m}))\right)}^r \\
\lesssim (\log R) \|Ef_{j,tr}\|_{BL^{r}_{{n-1},A/2}\left(Y \cap B_j \cap \mathcal{N}_{R^{1/2+ \delta_m}}(Z(P_1, \dotsc, P_{n+1-m})) \cap \bigcup_{B \in \mathcal{B}_s} B )\right)}^r, \end{multline*}
and such that each ball $B$ from $\mathcal{B}_s$ has 
\[ \left\lvert B \cap \mathcal{N}_{\rho^{1/2+ \delta_m}}(Z(P_1, \dotsc, P_{n+1-m})\right\rvert \sim 2^s. \]
Let $N$ be the smallest integer greater than $\left\lvert B_{n+1}(0,R^{1/2+\delta_m})\right\rvert$. Pick a random set of $M \sim  R^{\delta} N/ 2^s$ vectors $b_1, \dotsc, b_M$ from $B_{n+1}(0, 100R^{1/2+\delta_m})$. It will be shown that that with high probability, every $B \in \mathcal{B}_s$ has the property that the union
\[ \bigcup_{l=1}^M  B \cap \mathcal{N}_{\rho^{1/2+\delta_m} }(Z(P_1, \dotsc, P_{n+1-m}))+b_{l} \]
contains $B \cap \mathcal{N}_{R^{1/2+\delta_m} }(Z(P_1, \dotsc, P_{n+1-m}))$, and the property that no radius 1 ball intersecting 
\[ 50B \cap \mathcal{N}_{R^{1/2+\delta_m} }(Z(P_1, \dotsc, P_{n+1-m})) \]
 intersects $\geq R^{2\delta}$ many of the sets $50B \cap \mathcal{N}_{\rho^{1/2+\delta_m} }(Z(P_1, \dotsc, P_{n+1-m}))+b_{l}$. To see the claim, for any $1$-ball $Q$ intersecting $50B$, for any fixed $l$, the probability that $Q$ intersects 
\[ 50B \cap \mathcal{N}_{\rho^{1/2+\delta_m} }((Z(P_1, \dotsc, P_{n+1-m}))+b_{l} \]
 is $\sim 2^s/N$. View the event that $Q$ intersects
\[ 50B \cap \mathcal{N}_{\rho^{1/2+\delta_m} }(Z(P_1, \dotsc, P_{n+1-m}))+b_{l} \]
 as a Bernoulli random variable with probability $p_l \sim  2^s/N$. The number of such sets intersecting $Q$ is a sum $S$ of $M \sim R^{\delta}N/2^s$ many independently distributed Bernoulli random variables $X_l$, where $X_l$ takes the value 1 with probability $p_l \sim 2^s/N$. For a positive integer $L$, the probability that $S$ exceeds $L$ is 
\[ \mathbb{P}(S \geq L) \leq \mathbb{P}\left( \exp\left(\sum_{i=1}^M X_i \right) \geq \exp(L) \right) \leq e^{-L} \left( \left(1- \frac{C2^s}{N}\right) + \frac{C2^s e}{N}  \right)^M, \]
for some large positive constant $C$. Hence, using $\log(1+x) \leq x$ for $x \geq 0$,
\[ \log \mathbb{P}(S \geq L) \leq -L + M(e-1) \frac{C2^s}{N}.  \]
Taking $L \sim R^{2\delta}$ and recalling that $M \sim R^{\delta}N/2^s$ gives 
\[ \log \mathbb{P}(S \geq L) \leq -(1/2) R^{2\delta}, \]
and hence the probability that more than $R^{2\delta}$ many sets intersect a given $Q$ is exponentially small in $R$. Similarly,  the probability that no sets 
\[ B \cap \mathcal{N}_{\rho^{1/2+\delta_m} }(Z(P_1, \dotsc, P_{n+1-m}))+b_{l} \]
intersect $Q$ is bounded by
\[ M\log\left( 1- \frac{c2^s}{N} \right), \]
for some small positive constant $c$, so using $\log(1-x) \leq -x/2$ for $0 \leq x \leq 1/2$ gives that the probability that no such sets intersect $Q$ is also exponentially small in $R$. Therefore, there are vectors $b_1, \dotsc, b_M$, such that for any 1-ball $Q$ and any ball $B \in \mathcal{B}_s$, no more than $R^{2\delta}$ of the sets $50B \cap \mathcal{N}_{\rho^{1/2+\delta_m} } (Z(P_1, \dotsc, P_{n+1-m}))+b_{l}$ intersect $Q$, and such that every 1-ball $Q$ intersecting $B$ also intersects
\[ \bigcup_l B \cap \mathcal{N}_{\rho^{1/2+\delta_m} }(Z(P_1, \dotsc, P_{n+1-m}))+b_{l}. \]
It follows (using the uncertainty principle) that 
\begin{multline} \label{lsum} \|Ef_{j,tr}\|_{BL^{r}_{{n-1},A/2}\left(Y \cap B_j \cap \mathcal{N}_{R^{1/2+ \delta_m}}(Z(P_1, \dotsc, P_{n+1-m}))\right)}^r \\
\lesssim R^{O(\delta)} \sum_b \|Ef_{j,tr}\|_{BL^{r}_{{n-1},A/2}\left(Y \cap B_j \cap \left[\mathcal{N}_{\rho^{1/2+ \delta_m}}(Z(P_1, \dotsc, P_{n+1-m}))+b \right]\right)}^r\\
\lesssim R^{O(\delta)} \sum_b \|Ef_{j,tr,b}\|_{BL^{r}_{{n-1},A/2}\left(Y \cap B_j \cap \left[\mathcal{N}_{\rho^{1/2+ \delta_m}}(Z(P_1, \dotsc, P_{n+1-m}))+b \right]\right)}^r. \end{multline} 
In the last line, $g_b$ means the wave packet decomposition of $g$ at the new scale $\rho$ instead of $R$, adapted to $B_j$, and then restricted to the wave packets which are tangent to $Z(P_1, \dotsc, P_{n+1-m})+b$ at the new scale. The reason that this restriction is valid is that for each $b$, each wave packet $f_{\theta, \nu}$ of $f_{j,tr }$ will be a sum of tangential wave packets in the new scale after it is re-decomposed;  this step is identical to \cite[\S 5.2.2]{ouwang}, which has more details. 

 \begin{sloppypar} By induction on the radius, for each $b$,
\begin{multline*} \|Ef_{j,tr,b}\|_{BL^{r}_{{n-1},A/4}\left(Y \cap B_j \cap \left[\mathcal{N}_{\rho^{1/2+ \delta_m}}(Z(P_1, \dotsc, P_{n+1-m}))+b \right]\right)}^r \\ \lesssim \left[\gamma^{b_m} \rho^{m \epsilon} \rho^{\delta\left( \log \overline{A} - \log (A/4) \right)} \rho^{-a_m} \right]^r\|f_{j,tr,b}\|_2^r. \end{multline*}  
Substituting this into \eqref{lsum} gives
\begin{multline} \label{pause} \|Ef_{j,tr}\|_{BL^{r}_{{n-1},A/2}\left(Y \cap B_j \cap \mathcal{N}_{R^{1/2+ \delta_m}}(Z(P_1, \dotsc, P_{n+1-m}))\right)}^r \lesssim R^{O(\delta)} \\
\left[\gamma^{b_m} \rho^{m \epsilon} \rho^{\delta\left( \log \overline{A} - \log A \right)} \rho^{-a_m} \right]^r  \left(\sup_b \|f_{j,tr,b}\|_2^{r-2}\right) \left( \sum_b  \|f_{j,tr,b}\|_2^2 \right). \end{multline} 
By the $\lesssim R^{O(\delta)}$-overlapping property of the sets  
\[ 50B \cap \mathcal{N}_{\rho^{1/2+\delta_m}}(Z(P_1, \dotsc, P_{n+1-m}))+b \]
for each $B \in \mathcal{B}$, and by following the same argument as in \cite[p.~3586]{ouwang},
\begin{equation} \label{borthogonality} \sum_b \|f_{j,tr,b}\|_2^2 \lesssim R^{O(\delta) } \|f_{j,tr}\|_2^2. \end{equation} 
The only difference compared to \cite{ouwang} is that the $R^{O(\delta)}$ overlapping causes a loss of $R^{O(\delta)}$ compared to the boundedly overlapping case. 
It follows from \cite[Lemma~5.13]{ouwang}, that
\begin{equation} \label{equidistribution} \sup_b \|f_{j, tr, b} \|_2^2 \lesssim R^{O(\delta_m)} \left( \frac{ \rho^{1/2}}{ R^{1/2}} \right)^{n+1-m} \|f_{j, tr}\|_2^2. \end{equation}
Strictly speaking, the above only holds with $f_{j,tr}$ replaced by $f_{j,tr}^{ess}$ in the left-hand side, which equals $f_{j,tr}$ plus a function which makes negligible contribution to the broad norms above, so by the same argument as in \cite{ouwang} it may be assumed that $f_{j,tr} = f_{j,tr}^{ess}$. \end{sloppypar}

Substituting \eqref{borthogonality} and \eqref{equidistribution} into \eqref{pause} gives 
\begin{multline} \label{powerr} \|Ef_{j,tr}\|_{BL^{r}_{{n-1},A/2}\left(Y \cap B_j \cap \mathcal{N}_{R^{1/2+ \delta_m}}(Z(P_1, \dotsc, P_{n+1-m}))\right)}  \\ \lesssim R^{O(\delta_m)}\gamma^{b_m} \rho^{m \epsilon} \rho^{\delta\left( \log \overline{A} - \log A \right)} \rho^{-a_m} \left( \frac{ \rho}{R} \right)^{ \left(\frac{n+1-m}{2}\right)\left(\frac{1}{2}-\frac{1}{r}\right)}  \|f_{j,tr}\|_2 \\
= R^{O(\delta_m)}\gamma^{b_m} \rho^{m \epsilon} \rho^{\delta\left( \log \overline{A} - \log A \right)} \left(\frac{R}{\rho} \right)^{a_m -\left(\frac{n+1-m}{2}\right)\left(\frac{1}{2}-\frac{1}{r}\right)} R^{-a_m}  \|f_{j,tr}\|_2 \\
\leq R^{O(\delta_m)} \gamma^{b_m} \rho^{m \epsilon} \rho^{\delta\left( \log \overline{A} - \log A \right)}  R^{-a_m}  \|f_{j,tr}\|_2, \end{multline}
where the last line follows from $r = r_m+\delta$ and the definition of $a_m$ (note $a_m \leq \frac{n+1-m}{2} \left(\frac{1}{2} - \frac{1}{r_n} \right)$ holds even whem $m=n$ since $a_n$ is decreasing in $\alpha$ and equality holds at $\alpha=n/2$).  

Taking both sides of \eqref{powerr} to the power $r$, summing over $j$, then taking both sides to the power $1/r$, gives
\begin{multline*} \|Ef_{j,tr}\|_{BL^{r}_{{n-1},A/2}\left(Y \cap B_j \cap \mathcal{N}_{R^{1/2+ \delta_m}}(Z(P_1, \dotsc, P_{n+1-m}))\right)} \\
\lesssim R^{O(\delta_m)} (R/\rho)^{-m\epsilon}  \gamma^{b_m} R^{m \epsilon} R^{\delta\left( \log \overline{A} - \log A \right)}  R^{-a_m} \left( \sum_j \|f_{j,tr}\|_2^r \right)^{1/r}. \end{multline*}
Finally, $\left( \sum_j \|f_{j,tr}\|_2^r \right)^{1/r}$ is bounded by $\left( \sum_j \|f_{j,tr}\|_2^2 \right)^{1/2}$, which by \cite[Eq.~(5.10)]{ouwang} is bounded by $ \|f\|_2$. Since $R/\rho \geq R^{\delta_{m-1}}$ and $\delta_m \ll \epsilon \delta_{m-1}$, , the induction closes and this finishes the proof in the (remaining) transverse case. \end{proof}

\begin{proposition} \label{inductionestimate} Suppose that $n \geq 4$ and $(n+1)/2 < \alpha < n$. Then, for any $\epsilon >0$, there exists $\delta_0 >0$ such that for all $0 \leq \delta \leq \delta_0$, for all $\gamma \geq 1$,
\begin{equation} \label{inductioninequality} \|Ef\|_{L^{p}(Y)}  \leq C_{\epsilon}  \left( \frac{\gamma}{M}\right)^{\frac{1}{2} -\frac{1}{p}} R^{s + \epsilon} \|f\|_{L^2(A(1))}, 
\end{equation}
where $p = \frac{2(n-2)}{n-4}$ and
\[ s = \begin{cases} \frac{2\alpha-1}{4 n} & \frac{n+1}{2} < \alpha \leq   \frac{n^2-5}{2(n-2)} \\
\frac{\alpha(n^2-3n+1) - n^2+2n+1}{2 n(n-1)(n-3)} &  \frac{n^2-5}{2(n-2)} < \alpha < n-1 \\
\frac{ \alpha(2n-1)-(n+1)}{4n(n-1)} & n-1 \leq \alpha \leq  \frac{3n^2-3n+2}{3n-2} \\
\frac{ \alpha-2 }{2(n-2)} & \frac{3n^2-3n+2}{3n-2}  < \alpha < n. \end{cases}   \]
for all smooth $f$ supported in $B_n(0,1) \setminus B_n(0,1/2)$, for any $R \geq 1$ and any disjoint union $Y$ of $M$ many radius $K^2$ balls in $B_{n+1}(0,R)$ with the property that $\left\lVert Ef \right\rVert_{L^p(Q)}$ is constant up to a factor of 2 over $K^2$ balls $Q \subseteq Y$, $K= R^{\delta}$, where $Y$ has the property that for any $1 \leq r \leq R$ there are at most $\gamma r^{\alpha}$ many $K^2$ balls from $Y$ intersecting any $r$-ball.
\end{proposition}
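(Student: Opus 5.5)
The plan is to prove \eqref{inductioninequality} by induction on the scale $R$, following Du--Zhang's broad/narrow dichotomy at the $(n-1)$-linear level. Fix $K=R^{\delta}$. For each $K^2$-ball $Q\subseteq Y$, either $Q$ is \emph{broad}, meaning that $\int_Q|Ef|^p$ is controlled by $K^{O(1)}$ times the $(n-1)$-broad quantity in the definition of $\|Ef\|_{BL^p_{n-1,A}(Q)}$, or $Q$ is \emph{narrow}. In the narrow case there are $A$ planes $V_1,\dots,V_A\in G(n-2,n+1)$ such that $Ef$ on $Q$ is dominated by the sum of $Ef_\tau$ over caps $\tau$ with $G(\tau)$ within $K^{-2}$ of some $V_a$. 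Split $Y=Y_{broad}\cup Y_{narrow}$; by pigeonholing, one of the two pieces carries half the $L^p$ norm.

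\textbf{Broad case.} Apply Theorem~\ref{broadweighted} with $m=n+1$ (so no polynomials are imposed) at exponent $r_{n+1}$. This gives
\[
\|Ef\|_{BL^{r_{n+1}}_{n-1,A}(Y)}\lesssim \gamma^{b_{n+1}}R^{(n+1)\epsilon}\|f\|_2,
\]
since $a_{n+1}=0$. To pass from $r_{n+1}$ to $p$, use Hölder together with the hypothesis that $\|Ef\|_{L^p(Q)}$ is dyadically constant over the $M$ balls. This constancy lets us write $\|Ef\|_{L^p(Y_{broad})}\approx M^{1/p-1/r_{n+1}}\|Ef\|_{L^{r_{n+1}}}$, using local constancy at scale $K^2$ to compare the $L^p$ and $L^{r_{n+1}}$ norms on a single ball. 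Since $r_{n+1}<p$, we then use $M\lesssim \gamma R^{\alpha}$ to trade powers of $M$ for $(\gamma/M)^{1/2-1/p}R^{\alpha(\cdots)}$. The exponent bookkeeping should yield exactly
\[
s=\alpha\Bigl(\frac12-\frac1{r_{n+1}}\Bigr)\quad\text{in the first three ranges,}
\]
since the formulas for $\frac12-\frac1{r_{n+1}}$ in \eqref{rnplusoneformula} multiplied by $\alpha$ reproduce the first three cases of $s$. The $\gamma$ powers ($b_{n+1}$ together with the Hölder loss) must combine to at most $\gamma^{1/2-1/p}$. I would check this using $\frac12-\frac1p=\frac1{n-2}=\frac12-\frac1{p_{dec}}$, which is precisely why $b_m$ is defined in terms of $p_{dec}$.

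\textbf{Narrow case.} On narrow balls, use decoupling for the $(n-3)$-dimensional cone, namely narrow cone decoupling with exponent $p=\frac{2(n-2)}{n-4}=q_{n-2}$, into caps $\tau$ of angular size $K^{-1}$. This costs $K^{O(\epsilon)}$ times $(\#\tau \text{ near } V_a)^{1/2-1/p}$, giving
\[
\|Ef\|_{L^p(Q)}\lesssim K^{\epsilon}\Bigl(\sum_\tau\|Ef_\tau\|_{L^p(wQ)}^2\Bigr)^{1/2}.
\]
For each $\tau$, Lorentz rescale as in the proof of Theorem~\ref{refineddecoupling}. Then $Ef_\tau$ on $Y$ becomes $E\tilde f_\tau$ at scale $R/K^2$ on an image set $\tilde Y$ of $K^2$-balls. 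After re-pigeonholing to restore dyadic constancy, $\tilde Y$ has a modified Frostman constant $\tilde\gamma$ and count $\tilde M$. Applying the induction hypothesis then requires controlling $(\tilde\gamma/\tilde M)$ against $(\gamma/M)$, with the Jacobian and $K$-gains producing a factor $K^{-c}$ that closes the induction.

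\textbf{Main obstacle.} The hard step is making this narrow-case rescaling compatible with the $\gamma r^{\alpha}$ condition. Rescaling anisotropic planks into balls changes the dimension count, and this is exactly what produces the fourth regime
\[
s=\frac{\alpha-2}{2(n-2)}\quad\text{for }\alpha>\frac{3n^2-3n+2}{3n-2},
\]
where the narrow estimate dominates. In the first three ranges it is the broad bound that is larger. I would first try the Du--Zhang approach of covering $Y$ by rescaled boxes and bounding $\tilde\gamma\lesssim\gamma K^{(n+1)-\alpha}$-type losses. I would then verify that, with $p=q_{n-2}$, the induction closes precisely when $s\ge\frac{\alpha-2}{2(n-2)}$. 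Taking $s$ to be the maximum of the two bounds then gives the stated piecewise formula.
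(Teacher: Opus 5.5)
Your broad case is correct and is the paper's argument. You apply Theorem~\ref{broadweighted} with $m=n+1$ (so $a_{n+1}=0$) and use the uncertainty principle to pass from $L^p(Q)$ to $L^{r_{n+1}}(2Q)$. You then use dyadic constancy to reverse Hölder; only the upper-bound direction is needed, so your two-sided ``$\approx$'' is unnecessary. Finally $M\lesssim \gamma R^{\alpha}$ and $b_{n+1}\le \frac{1}{r_{n+1}}-\frac1p$, since $p=p_{dec}>r_{n-1}$.

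The gap is in the narrow case, where the content of this proposition lies. You say what must be controlled ($\tilde\gamma/\tilde M$ against $\gamma/M$) but not how, and the concrete steps you do state would not work.

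First, the rescaling geometry. Lorentz rescaling $Ef_\tau$ on $B_{n+1}(0,R)$ does not give a problem at scale $R/K^2$: the image of $B_{n+1}(0,R)$ still has length $R$ in the radial direction. Likewise, the image of a $K^2$-ball is a plate of size $\sim K$ (angular) $\times\, 1$ (normal) $\times\, K^2$ (radial), not a $K^2$-ball. So you must do two things:
\begin{itemize}
\item Split $f_\tau=\sum_{\Box} f_\Box$ into intermediate wave packets, with $\Box$ of size $\sim R/K$ in the $n-1$ angular directions, $R/K^2$ radially and $R$ in the normal direction. These rescale to $\widetilde R$-balls, $\widetilde R=R/K^2$, and satisfy $\sum_\Box\|f_\Box\|_2^2\lesssim \|f_\tau\|_2^2$.
\item Replace the $K^2$-balls by ellipsoids $S$ of size $\widetilde K^2K$ (angular), $\widetilde K^2$ (radial), $\widetilde K^2K^2$ (normal), which rescale to $\widetilde K^2$-balls. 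Pigeonhole $\|Ef_\Box\|_{L^p(S)}$ to recover the dyadic constancy needed in the rescaled hypothesis.
\end{itemize}

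Second, and more importantly, the Du--Zhang counting that closes the induction is missing. You must pigeonhole:
\begin{itemize}
\item the number $\eta$ of $K^2$-balls of $Y$ meeting each $S$;
\item the number $\widetilde M$ of ellipsoids forming each $Y_\Box$;
\item a family $\mathbb B$ of boxes with $\|f_\Box\|_2$ dyadically constant;
\item the number $\mu$ of sets $Y_\Box$ meeting each $Q$.
\end{itemize}
After $\ell^2$ narrow decoupling, Hölder must use this $\mu$, not the number of caps near the $V_a$ as in your plan. The two estimates to prove are:
\begin{itemize}
\item the incidence bound $M\mu\lesssim(\log R)^{O(1)}|\mathbb B|\widetilde M\eta$;
\item the rescaled Frostman bound $\gamma(\Box)\lesssim K^{1+\alpha+O(\delta)}\gamma\,\eta^{-1}$. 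A rescaled $r$-ball pulls back to an $rK\times\cdots\times rK\times r\times rK^2$ ellipsoid. This is covered by $\sim K$ balls of radius $rK$, each meeting at most $\gamma(rK)^{\alpha}$ balls of $Y$, while each $S$ meets $\sim\eta$ of them.
\end{itemize}
The $\eta$'s cancel, giving $\mu\gamma(\Box)/\widetilde M\lesssim K^{1+\alpha+O(\delta)}\gamma|\mathbb B|/M$. Dyadic constancy then turns $|\mathbb B|^{\frac12-\frac1p}\bigl(\sum_{\Box\in\mathbb B}\|f_\Box\|_2^p\bigr)^{1/p}$ into $\lesssim\|f\|_2$. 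Combine this with the Jacobian factor $K^{\frac{n+1}{p}-\frac{n-1}{2}}$ and $\widetilde R^{s+\epsilon}=K^{-2(s+\epsilon)}R^{s+\epsilon}$. The resulting $K$-exponent is $\frac{\alpha-2}{n-2}-2s-2\epsilon+O(\delta)$, which is exactly where the condition $s\ge\frac{\alpha-2}{2(n-2)}$ comes from.

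Your guessed loss $\tilde\gamma\lesssim\gamma K^{(n+1)-\alpha}$ has no factor $\eta^{-1}$. It therefore cannot be paired with any incidence count relating $M$ to $\widetilde M$, since such a count necessarily involves $\eta$. Its power of $K$ would also give the threshold $\frac{n-2-\alpha}{2(n-2)}$ rather than $\frac{\alpha-2}{2(n-2)}$.
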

\begin{proof}[Proof that Proposition~\ref{inductionestimate} implies Theorem~\ref{L2estimate}] Let $R \geq 1$ and let $X$ be a disjoint union of unit balls in $B_{n+1}(0,R)$ with the property that for any $1 \leq r \leq R$ there are at most $\gamma r^{\alpha}$ many unit balls in any $r$-ball. Let $\epsilon >0$, and let 
$p= \frac{2(n-2)}{n-4}$
. By pigeonholing, there is a set $Y \subseteq X$, which is a disjoint union of unit balls, such that $\|Ef\|_{L^{p}(Q)}$ is constant up to a factor of 2 as $Q \subseteq Y$ varies over $Y$, and such that 
\[ \|Ef\|_{L^{2}(X)} \lesssim \left( \log R\right)^{1/2} \|Ef\|_{L^{2}(Y)} \leq \left( \log R\right)^{1/2} M^{\frac{1}{2} - \frac{1}{p}} \|Ef\|_{L^{p}(Y)}, \]
where $M$ is the number of unit balls $Q \subseteq Y$. Applying Proposition~\ref{inductionestimate} gives 
\[  \|Ef\|_{L^{2}(X)} \leq C_{\epsilon}  R^{s + \epsilon} \gamma^{\frac{1}{2} - \frac{1}{p}}\|f\|_{L^2(A(1))},\]
where the $s$ from Proposition~\ref{inductionestimate} matches the $s$ in the statement of Theorem~\ref{L2estimate}. \end{proof}

The proof of Proposition \ref{inductionestimate} is similar to \cite[Lemma~3.3]{harris2018}, but the details are included for completeness. 

\begin{proof}[Proof of Proposition~\ref{inductionestimate}] It suffices to prove the Proposition with $\delta=\delta_0$, where $\delta_0$ is implicitly chosen sufficiently small to make the argument below work. Assume that \eqref{inductioninequality} is true for all $\widetilde{R} \leq R/ K$; \eqref{inductioninequality} will then be shown for $R$. Sort the $K^2$-balls $Q \subseteq Y$ into ``$(n-1)$-broad'' and ``$(n-2)$-narrow'' cases, where $Q$ is called $(n-1)$-broad if for any $(n-2)$-dimensional subspaces $V_1, \dotsc, V_A$, there exists a $K^{-1}$-cap $\tau \subseteq \Gamma^n$ with $d(G(\tau), V) > K^{-2}$ such 
\[ \|Ef\|_{L^{p}(Q) } \leq K^{100n} \|Ef_{\tau} \|_{L^{p}(Q)}, \]
where $A=\overline{A}$ is the large constant from Theorem~\ref{broadweighted} when $m=n+1$. Otherwise $Q$ is called $(n-2)$-narrow. Here $f = \sum_{\tau} f \psi_{\tau}$ is the decomposition of $f$ obtained by forming a boundedly overlapping cover $\{\tau\}$ of $B_n(0,1) \setminus B_n(0,1/2)$ by tubes of dimensions $K^{-1} \times K^{-1} \times K^{-1} \times 1$, where $\{\psi_{\tau} \}_{\tau}$ is a smooth partition of unity subordinate to this cover. Here $A = \overline{A}$ is the large constant from Theorem~\ref{broadweighted}.  

Assume first that at least half of the balls $Q\subseteq Y$ are broad, which is the broad case. Let $q=r_{n+1}$ from Theorem~\ref{broadweighted}. By pigeonholing, there is a set $Y' \subseteq Y$ consisting of a fraction $\gtrsim 1/(\log R)$ of the (broad) balls $Q$, such that 
\begin{equation} \label{dyadic} \inf_{V_1, \dotsc, V_A \in G(n-2,n+1) } \sup_{d(G(\tau), V_a) \geq K^{-2} \forall a } \int_{2Q} |Ef_{\tau} |^q \end{equation}
is constant over $Q \subseteq Y'$ up to a factor of 2. Then, by the dyadically constant property of $\|Ef\|_{L^p(Q)}$ as $Q$ varies over $Y$, and the definition of the broad case,
\begin{multline*} \|Ef\|_{L^{p}(Y)} \lesssim \\
(\log R)^{1/p} K^{O(1)} \left( \sum_{Q \subseteq Y'} \inf_{V_1, \dotsc, V_A \in G(n-2,n+1) } \sup_{d(G(\tau), V_a) \geq K^{-2} \forall a } \int_Q |Ef_{\tau} |^p \right)^{1/p}, \end{multline*}
with the natural interpretation when $p=\infty$; see \eqref{infinityversion}. Since each $Ef_{\tau}$ has Fourier transform supported in a ball around the origin of radius $\sim 1$, the uncertainty principle yields $\|Ef_{\tau}\|_{L^p(Q)} \lesssim K^{O(1)} \|Ef_{\tau}\|_{L^q(2Q)}$, apart from a rapidly decaying error term which can ignored for the purposes below. Therefore
\begin{multline*} \|Ef\|_{L^{p}(Y)} \\
 \lesssim K^{O(1)}\left( \sum_{Q \subseteq Y'} \inf_{V_1, \dotsc, V_A \in G(n-2,n+1) } \sup_{d(G(\tau), V_a) \geq K^{-2} \forall a } \left(\int_Q |Ef_{\tau} |^q\right)^{p/q} \right)^{1/p} \\
= K^{O(1)}\left( \sum_{Q \subseteq Y'}  \left( \inf_{V_1, \dotsc, V_A \in G(n-2,n+1) } \sup_{d(G(\tau), V_a) \geq K^{-2} \forall a }\int_Q |Ef_{\tau} |^q\right)^{p/q} \right)^{1/p} \\
\lesssim K^{O(1)} M^{\frac{1}{p}-\frac{1}{q}}  \left(\sum_{Q \subseteq Y'}  \inf_{V_1, \dotsc, V_A \in G(n-2,n+1) } \sup_{d(G(\tau), V_a) \geq K^{-2} \forall a }\int_Q |Ef_{\tau} |^q\right)^{1/q}, \end{multline*}
where the dyadically constant property of the quantity in \eqref{dyadic} over $Q \subseteq Y'$ was used to reverse Hölder's inequality in the last line. The outer parts of this inequality hold also when $p=\infty$ by a more direct argument. By the definition of broad norm, this yields 
\[ \|Ef\|_{L^{p}(Y)} \lesssim K^{O(1)}  M^{\frac{1}{p}-\frac{1}{q}}  \|Ef\|_{BL^{q}_{{n-1},A}(Y)}. \]
Applying Theorem~\ref{broadweighted} to the above gives 
\begin{equation} \label{broadapplication} \|Ef\|_{L^{p}(Y)} \lesssim C_{\epsilon} R^{\epsilon/2} K^{O(1)}  M^{\frac{1}{p}-\frac{1}{q}} \gamma^{\frac{1}{q} - \frac{1}{p}} \|f\|_2, \end{equation}
where the power of $\gamma$ from Theorem~\ref{broadweighted} is always at most $\frac{1}{q}-\frac{1}{p}$, since $p=p_{dec} > r_{n-1}$, and increasing the power of $\gamma$ is permissible since $\gamma \geq 1$ (unless $Y$ is empty). By writing $M^{\frac{1}{p}-\frac{1}{q}} = M^{\frac{1}{p}-\frac{1}{2}} M^{\frac{1}{2} - \frac{1}{q}}$, the condition on $Y$ gives 
\[ M^{\frac{1}{p}-\frac{1}{q}} \lesssim M^{\frac{1}{p}-\frac{1}{2}} \gamma^{\frac{1}{2} - \frac{1}{q}} R^{\alpha \left( \frac{1}{2} - \frac{1}{q}\right)}. \]
Substituting this into \eqref{broadapplication} gives 
\[  \|Ef\|_{L^{p}(Y)} \lesssim C_{\epsilon} R^{\epsilon/2} K^{O(1)}  M^{\frac{1}{p}-\frac{1}{2}} \gamma^{\frac{1}{2}-\frac{1}{p}}  R^{\alpha \left( \frac{1}{2} - \frac{1}{q}\right)}\|f\|_2. \] By definition of the exponent $q=r_{n+1}$ from Theorem~\ref{broadweighted}, 
\[ \alpha\left( \frac{1}{2} -\frac{1}{q} \right) =\begin{cases} \frac{2\alpha-1}{4 n} & \frac{n+1}{2} < \alpha \leq  \frac{n^2-5}{2(n-2)} \\
\frac{\alpha(n^2-3n+1) - n^2+2n+1}{2 n(n-1)(n-3)} & \frac{n^2-5}{2(n-2)} < \alpha < n-1 \\
\frac{ \alpha(2n-1)-(n+1)}{4n(n-1)} & n-1 \leq \alpha < n \\\end{cases} \]
which is no larger than the exponent in the proposition statement, since for $\alpha >\frac{3n^2-3n+2}{3n-2}$,
\[ \frac{ \alpha(2n-1)-(n+1)}{4n(n-1)} <  \frac{ \alpha-2 }{2(n-2)},\]
so this finishes the proof of the broad case. 

Now suppose that at least half of the $K^2$-balls $Q \subseteq Y$ are narrow.

For each $\tau$, using the wave packet decomposition from \cite[Proposition~3.1]{harris2018}, decompose $f_{\tau} = \sum_{\Box \in \mathbb{T}_{\tau}} f_{\Box}$, where each $\Box$ has dimensions $\sim RK^{\delta-1} \times \dotsb \times RK^{\delta-1} \times RK^{\delta-2} \times R$, with long direction normal to the part of the cone corresponding to $\tau$, and short direction in the radial direction of the part of the cone corresponding to $\tau$. These boxes form a boundedly overlapping cover of $\mathbb{R}^{n+1}$. Each $Ef_{\Box}$ is negligible outside $\Box$, and $Ef = \sum_{\tau} \sum_{\Box \in \mathbb{T}_{\tau}} Ef_{\Box} = \sum_{\Box} Ef_{\Box}$. Let $\widetilde{R} = RK^{-2}$, and let $\widetilde{K} = \widetilde{R}^{\delta}$. For each $\tau$, the $\Box$ will (approximately) become an $\widetilde{R}$-ball under a Lorentz rescaling. For each $\tau$,  cover $\mathbb{R}^{n+1}$ by ellipsoids $S$ of dimensions $\sim \widetilde{K}^2K \times \dotsb \times \widetilde{K}^2 K \times \widetilde{K}^2 \times \widetilde{K}^2 K^2$, with short axis in the radial direction of the part of the cone corresponding to $\tau$, and long axis normal to the part of the cone corresponding to $\tau$. These ellipsoids will become $\widetilde{K}^2$-balls under a Lorentz rescaling. For fixed $\tau$ and $\Box \in \mathbb{T}_{\tau}$, sort the sets $S$ with $S \cap \Box \neq \emptyset$ according to the dyadic value $\kappa$ of $\|Ef_{\Box}\|_{L^p(S)}$ and further according to the dyadic value $\eta$ of the number of $K^2$-balls $Q \subseteq Y$ such that $Q \cap S \neq \emptyset$. For each $Q$, by pigeonholing there are sets $Y_{\Box}$, each of which is a union over such sets $S$ with fixed dyadic $\eta$ and $\kappa$, such that 
\[ \|Ef\|_{L^p(Q)} \lesssim (\log R)^{O(1)} \left\lVert \sum_{\Box: \tau(\Box) \in \bigcup_{a=1}^A V_a} \chi_{Y_{\Box} } Ef_{\Box} \right\rVert_{L^p(Q)}, \]
where, from the definition of $(n-2)$-narrow, $\tau \in V_a$ that $G(\tau)$ has distance at most $K^{-2}$ from $V_a$, and the $V_a$ are $(n-2)$-dimensional subspaces depending on $Q$.

By pigeonholing the balls $Q$, it may be assumed that the dyadic values $\kappa$ and $\eta$ ensuring the above are constant over $Q \subseteq Y$ (this uses the assumption that  $\|Ef\|_{L^p(Q)}$ is dyadically constant over $Q \subseteq Y$). By further pigeonholing, there is a collection $\mathbb{B}$ of sets $\Box$ such that 
\begin{equation} \label{stareq}  \|Ef\|_{L^p(Q)} \lesssim (\log R)^{O(1)} \left\lVert \sum_{\Box \in \mathbb{B} : : \tau(\Box) \in \bigcup_{a=1}^A V_a} \chi_{Y_{\Box} } Ef_{\Box} \right\rVert_{L^p(Q)}, \end{equation}
over a fraction $\gtrsim (\log R)^{-O(1)}$ of the balls $Q \subseteq Y$, and such that $\|f_{\Box}\|_2$ is constant over $\Box \in \mathbb{B}$ up to a factor of 2. By further pigeonholing, it may be assumed that for each $\Box \in \mathbb{B}$, each $Y_{\Box}$ is a union of $\sim \widetilde{M}$ ellipsoids $S$. By pigeonholing one more time, there is a set $Y' \subseteq Y$ with a fraction $\gtrsim (\log R)^{-O(1)}$ of the balls from $Y$, such that each $Q \subseteq Y'$ intersects $\sim \mu$ many sets $Y_{\Box}$ with $\Box \in \mathbb{B}$, and \eqref{stareq} holds for every $Q \subseteq Y'$. 

For each $K^2$-ball $Q \subseteq Y'$, by the $(n-2)$-narrow decoupling theorem for the cone (see e.g.~\cite[Theorem~7.1]{gaoliumiaoxi} for an exact statement),
\begin{equation} \label{mupause} \|Ef\|_{L^p(Q)} \lesssim  K^{O(\delta)} \left( \sum_{\Box \in \mathbb{B}: Y_{\Box} \cap Q \neq \emptyset} \| \chi_{Y_{\Box} } Ef_{\Box}\|_{L^{p}(2Q)}^2 \right)^{1/2}. \end{equation}
For each $Q \subseteq Y'$, it has been assumed that there are $\sim \mu$ many $\Box \in \mathbb{B}$ with $Y_{\Box} \cap Q \neq \emptyset$, so by \eqref{mupause} and Hölder's inequality,
\[ \|Ef\|_{L^p(Q)} \lesssim  K^{O(\delta)} \mu^{\frac{1}{2} - \frac{1}{p} } \left( \sum_{\Box \in \mathbb{B}} \| \chi_{Y_{\Box} } Ef_{\Box}\|_{L^{p}(2Q)}^p \right)^{1/p}. \] 
For finite $p$, taking both sides to the power $p$ and summing over $Q \subseteq Y'$ gives 
\begin{equation} \label{Efbound} \|Ef\|_{L^p(Y)} \lesssim  K^{O(\delta)} \mu^{\frac{1}{2} - \frac{1}{p} }\left( \sum_{\Box \in \mathbb{B}} \|  Ef_{\Box}\|_{L^{p}(Y_{\Box})}^p \right)^{1/p},\end{equation}
with this holding also if $p=\infty$. Let 
\[  s = \begin{cases} \frac{2\alpha-1}{4 n} & \frac{n+1}{2} < \alpha \leq   \frac{n^2-5}{2(n-2)} \\
\frac{\alpha(n^2-3n+1) - n^2+2n+1}{2 n(n-1)(n-3)} & \frac{n^2-5}{2(n-2)} < \alpha < n-1 \\
\frac{ \alpha(2n-1)-(n+1)}{4n(n-1)} & n-1 \leq \alpha \leq  \frac{3n^2-3n+2}{3n-2} \\
\frac{ \alpha-2 }{2(n-2)} &  \frac{3n^2-3n+2}{3n-2} < \alpha < n. \end{cases} \]
The definition of $s$ implies that for all $(n+1)/2 < \alpha < n$, 
\begin{equation} \label{snarrow} s \geq \frac{\alpha-2}{2(n-2)}. \end{equation}
By piecewise linearity and continuity of $s$, it suffices to check \eqref{snarrow} for the first and third parts of the definition of $s$, in which case \eqref{snarrow} is a straightforward calculation.  By Lorentz rescaling and the induction hypothesis, 
\begin{equation} \label{fboxbound} \|  Ef_{\Box}\|_{L^{p}(Y_{\Box})} \lesssim  K^{\frac{n+1}{p} - \frac{n-1}{2}} \left( \frac{\gamma(\Box)}{\widetilde{M}} \right)^{\frac{1}{2} - \frac{1}{p} } \widetilde{R}^{s+\epsilon} \|f_{\Box}\|_2. \end{equation}
Here $\gamma(\Box)$ is the smallest constant such that for any $1 \leq r \leq \widetilde{R}$, the number of ellipsoids $S \in \mathbb{S}_{\Box}$ intersecting any fixed parallel ellipsoid of dimensions $r K \times \dotsb \times rK \times r \times rK^2$, is at most $\gamma(\Box)r^{\alpha}$. This will become a Katz-Tao ball condition at scale $\widetilde{R}$ as in the Proposition statement, under a Lorentz rescaling. Similarly to \cite[Eq.~(3.19)]{harris2018} or the Schrödinger case in \cite[Eq.~(3.24)]{duzhang}, 
\[ M \mu \lesssim (\log R)^{O(1)} |\mathbb{B} | \widetilde{M} \eta, \]
and similarly to \cite[Eq.~(3.20)]{harris2018} or \cite[Eq.~(3.25)]{duzhang},
\[ \gamma(\Box) \lesssim K^{1+\alpha+O(\delta)} \gamma\eta^{-1}. \] 
Hence, \eqref{fboxbound} becomes 
\[ \|  Ef_{\Box}\|_{L^{p}(Y_{\Box})} \lesssim K^{-2(s+\epsilon) + \frac{n+1}{p} - \frac{n-1}{2} +(1+\alpha)\left( \frac{1}{2}-\frac{1}{p}\right)  +O(\delta) } \left( \frac{\gamma |\mathbb{B} |}{M \mu} \right)^{\frac{1}{2} - \frac{1}{p} } R^{s+\epsilon} \|f_{\Box}\|_2.  \]
Combining with \eqref{Efbound} yields 
\begin{multline*} \|Ef\|_{L^p(Y)} \lesssim K^{- 2(s+\epsilon)+ \frac{n+1}{p} - \frac{n-1}{2}+(\alpha+1)\left(\frac{1}{2} - \frac{1}{p} \right)+ O(\delta) } \\
\times \left( \frac{\gamma |\mathbb{B} |}{M} \right)^{\frac{1}{2} - \frac{1}{p} } R^{s+\epsilon}\left( \sum_{\Box \in \mathbb{B}} \|  f_{\Box}\|_2^p \right)^{1/p}. \end{multline*}
Since $\|f_{\Box}\|_2$ is dyadically constant, and by orthogonality (see \cite[Proposition~3.1]{harris2018}), this simplifies to 
\[ \|Ef\|_{L^p(Y)} \lesssim  K^{-2\epsilon} K^{\frac{n+1}{p} - \frac{n-1}{2}- 2s+(\alpha+1)\left(\frac{1}{2} - \frac{1}{p} \right) + O(\delta) } \left( \frac{\gamma}{M} \right)^{\frac{1}{2} - \frac{1}{p} } R^{s+\epsilon}\|f\|_2. \]
To close the induction, by \eqref{snarrow} it suffices to check that
\[ \frac{n+1}{p} - \frac{n-1}{2}- \frac{\alpha-2}{n-2}  +(\alpha+1)\left(\frac{1}{2} - \frac{1}{p} \right)= 0. \]
 After some algebra, this follows from $\frac{1}{2} - \frac{1}{p} = \frac{1}{n-2}$. \end{proof}

\section{Proof of Proposition~\ref{counterexample}}

\begin{proof}[Proof of Proposition~\ref{counterexample}]  Assume that $n \geq 4$, and let $m$ be any integer with $0 < m <(n+1)/2$, to be chosen. Let $\kappa \in (0,1/2)$ be a constant to be chosen. Let $R \geq 1$ be large. Let $\Omega'$ be the projection down to $\mathbb{R}^{n-m}$ of the $c R^{-1}$ neighbourhood of the set $\Lambda_m = \{ (\xi, |\xi|) \in \Gamma^{n-m} :R^{\kappa}(\xi, |\xi|) \in \mathbb{Z}^{n+1-m} \}$, where $c$ is a sufficiently small positive constant. It will be shown that the $(n-m)$-dimensional Lebesgue measure of $\Omega'$ is 
\begin{equation} \label{measurebound} \mathcal{H}^{n-m}(\Omega') \gtrapprox R^{\kappa(n-m-1) - (n-m)}. \end{equation} The set $\Lambda_m$ has $\gtrapprox R^{\kappa(n-m-1)}$ many points, since it is a union of rescaled lattice points on $\sim R^{\kappa}$ many spheres in $\mathbb{R}^{n-m}$ of integer radius $N \sim R^{\kappa}$, and the number of lattice points on each such sphere of large integer radius $N$ is $\gtrapprox N^{n-m-2}$. When $n-m=2$ this is trivial. When $n-m \geq 4$, it is a standard result from analytic number theory (see e.g.~\cite[Theorem~20.2]{iwanieckowalski}),  which holds even when $N$ is the square root of a large integer. When $n-m  = 3$, for $N$ odd there are at least $6N$ lattice points on the sphere of integer radius $N$. This follows from Stieltjes' formula:
\[ r_3(N^2) = 6 P \prod_{k=1}^s \left( q_k^{a_k} + \frac{2(q_k^{a_k}-1)}{q_k-1} \right) , \]
where $r_3(n)$ denotes the number of ways of writing $n$ as a sum of three squares, $N = 2^a P Q$ is the prime factorisation of $N$, with $P$ a product of primes congruent to $1 \bmod 4$, and $Q= \prod_{k=1}^s q_{k}^{a_k}$ a product of powers of distinct primes $q_k$ congruent to $3 \bmod 4$; see \cite{olds, olds2, pall} for proofs and references therein. Since there are still $\sim R^{\kappa}$ many odd $N$ in the range $N \sim R^{\kappa}$, \eqref{measurebound} still holds when $n-m=3$. This proves \eqref{measurebound}. Note that $n-m \geq 2$, since it has been assumed that $m < (n+1)/2$ and $n \geq 4$.  If $n-m = 2$ then $(n,m)=(4,2)$, and if $n-m=3$ then $(n,m) \in\{ (4,1), (5,2), (6,3)\}$.

Let $g = \chi_{\Omega'}$, so that (for $c$ now chosen sufficiently small)
\begin{equation} \label{lowerdim} \left\lvert \int_{\mathbb{R}^{n-m}} e^{2 \pi i \left\langle (\xi, |\xi| ), (Rx',Rt) \right\rangle} \, g(\xi) \, d\xi \right\rvert \gtrsim \mathcal{H}^{n-m}(\Omega'), \end{equation}
for all $(x',t)$ in the set 
\[ U = \left[R^{\kappa-1} \mathbb{Z}^{n+1-m} + B_{n+1-m}(0, c R^{-1} )\right] \cap B_{n+1-m}(0,1), \]
which has $(n+1-m)$-dimensional Lebesgue measure $\mathcal{H}^{n+1-m}(U) \sim R^{-\kappa (n+1-m)}$. The lower bound in \eqref{lowerdim} comes from the observation that if $(\xi,|\xi|) \in \mathcal{N}_{cR^{-1}}(\Lambda_m)$ and $(x,t) \in U$, then $\dist(\langle (\xi, |\xi|), (Rx,Rt) \rangle, \mathbb{Z})  \lesssim c$ for small $c$, and thus the integrand in \eqref{lowerdim} is essentially 1 on $\Omega'$. 

Define $\Omega \subseteq B_n(0,10n)$ by 
\[ \Omega = [0,  c R^{-1/2}]^{m} \times \Omega'. \]
 Then by \eqref{measurebound},
\begin{equation} \label{omegameasure} \mathcal{H}^{n}(\Omega) \gtrapprox  R^{\kappa(n-m-1) -n+ \frac{m}{2}}. \end{equation}
Let $f = \chi_{\Omega}$. Similarly to \eqref{lowerdim},
\begin{equation} \label{higherdim}  \left\lvert \int_{\mathbb{R}^n} e^{2 \pi i \left\langle (\xi, |\xi| ), (Rx,Rt) \right\rangle} \, f(\xi) \,  d\xi \right\rvert \gtrsim \mathcal{H}^{n}(\Omega), \end{equation}
for all $(x,t)$ in the set \[ V = [0, c R^{-1/2}]^m \times U, \]
which has Lebesgue measure $R^{-\frac{m}{2} -\kappa (n+1-m)}$. To see the lower bound in \eqref{higherdim}, recall that $\dist(\langle (\xi, |\xi|), (Rx,Rt) \rangle, \mathbb{Z})  \ll 1$ when $\xi' \in \Omega'$ and $(x',t) \in V$. Therefore, if $\xi= (\xi'', \xi') \in \Omega$ and $(x,t) = (x'',x',t) \in V$, then 
\[ \langle (\xi,|\xi|), (Rx,Rt) \rangle = \langle ( \xi'', \xi', |\xi''+\xi'|), (Rx'',Rx',Rt) \rangle  \in \mathbb{Z} + O(c),\]
by the formula $\sqrt{1+x} = 1 + O(x)$ for small $x$. Therefore, the integrand in \eqref{higherdim} is essentially 1 on $\Omega$, and the inequality in \eqref{higherdim} follows.

If $\nu = \chi_V$ and \[ m < \frac{n+1}{2}  < \alpha \leq  n+1-m, \]
to find $c_{\alpha}(\nu)$ requires considering $\nu(B(\cdot, r))$ for four ranges of $r$. Let $x \in \mathbb{R}^{n+1}$. If $0 < r < R^{-1}$, then
\[ \nu(B(x,r)) \lesssim  r^{n+1} \lesssim r^{\alpha} r^{n+1-\alpha} \lesssim r^{\alpha} R^{\alpha-n-1}. \]
If $R^{-1} \leq r \leq R^{\kappa-1}$, then (since 
$\alpha >m$)
\[ \nu(B(x,r)) \lesssim R^{-(n+1-m)}  r^m \lesssim r^{\alpha} r^{m-\alpha} R^{-(n+1-m)} \lesssim r^{\alpha} R^{\alpha-n-1}. \]
If $R^{\kappa-1} \leq r \leq R^{-1/2}$, then 
\begin{multline} \label{thirdcase}  \nu(B(x,r)) \lesssim r^m \left( \frac{r}{R^{\kappa-1} } \right)^{n+1-m} R^{-(n+1-m)} \\
= R^{-\kappa (n+1-m)} r^{\alpha} r^{n+1-\alpha} \lesssim r^{\alpha} R^{\frac{\alpha-n-1}{2} - \kappa (n+1-m) }.    \end{multline}
If $R^{-1/2} \leq r \leq 1$, then (since $\alpha \leq n+1-m$)
\begin{equation} \label{fourthcase} \nu(B(x,r)) \leq R^{-m/2} \left( \frac{r}{R^{\kappa-1} } \right)^{n+1-m} R^{-(n+1-m)} \lesssim r^{\alpha} R^{-\kappa (n+1-m) - \frac{m}{2} }.\end{equation}
Since $\alpha \leq n+1-m$, the fourth term is always larger than the third one. Hence, 
\[ c_{\alpha}(\nu) \lesssim  \max\left\{ R^{\alpha-n-1},  R^{-\kappa (n+1-m) - \frac{m}{2}} \right\}. \]
Therefore, choose
\begin{equation} \label{kappa1choice} \kappa = \frac{n + 1-  \frac{m}{2} - \alpha}{n+1-m} =: \kappa_1, \end{equation} which satisfies $0 < \kappa < 1/2$ since $(n+1)/2< \alpha \leq n+1-m$. This gives 
\begin{equation} \label{dimcdn} c_{\alpha}(\nu ) \sim \nu(\mathbb{R}^{n+1}) \sim R^{\alpha-n-1}. \end{equation} By \eqref{higherdim}, duality, and the definition of $\beta(\alpha, \Gamma^n)$, 
\begin{multline*} \mathcal{H}^{n}(\Omega) \nu(\mathbb{R}^{n+1}) \lessapprox \int \left\lvert \int_{\mathbb{R}^n} e^{2 \pi i \left\langle (\xi, |\xi| ), (Rx,Rt) \right\rangle} \, f(\xi) \,  d\xi \right\rvert \, d\nu(x,t) \\ \lesssim \left(\nu(\mathbb{R}^{n+1}) c_{\alpha}(\nu)\right)^{1/2} R^{-\beta(\alpha, \Gamma^n)/2}\mathcal{H}^{n}(\Omega)^{1/2}.\end{multline*}
By \eqref{dimcdn}, this simplifies to
\[ R^{\beta(\alpha, \Gamma^n)} \lessapprox \mathcal{H}^n(\Omega)^{-1}. \]
Using \eqref{omegameasure}
  and letting $R \to \infty$ gives 
 \[ \beta(\alpha, \Gamma^n) \leq n - \frac{m}{2}  - \kappa(n-m-1). \]
Since $\kappa = \frac{n + 1-  \frac{m}{2} - \alpha}{n+1-m}$, this simplifies to 
\[ \beta(\alpha, \Gamma^n) \leq \alpha -1 + 2\kappa. \]
Substituting $\kappa = \frac{n + 1-  \frac{m}{2} - \alpha}{n+1-m}$ gives
\[ \beta(\alpha, \Gamma^n) \leq \alpha -1 + \frac{2n + 2-m-2\alpha}{n+1-m}. \]
This can be written as
\begin{equation} \label{caseone} \beta(\alpha, \Gamma^n) \leq \alpha  - \frac{2\alpha -n - 1}{n+1-m}. \end{equation}
In particular, if $\alpha > (n+1)/2$, the bound is improved by taking $m$ as large as possible subject to the restriction $\alpha \leq n+1-m$, or equivalently $m \leq n+1-\alpha$. Since it has been assumed that $\alpha > (n+1)/2$, for such $m$ the requirement $\alpha >m$ is redundant. Therefore, the optimal choice is to take $m$ such that $\alpha \in (n-m, n+1-m]$. 

\subsection{Alternative case} 
If $\alpha \geq n+1-m$, then the adjusted version of \eqref{fourthcase} will match \eqref{thirdcase}. Hence
\[ c_{\alpha}(\nu) \lesssim \max\left\{ R^{\alpha-n-1},  R^{-\kappa (n+1-m) + \frac{\alpha-n-1}{2}}\right\}, \]
so let
\begin{equation} \label{kappa2choice}  \kappa = \frac{n+1-\alpha}{2(n+1-m)} =:\kappa_2\end{equation}
(which is in $(0,1/2)$ since $\alpha >m$), to get 
\begin{equation} \label{cvalue}  c_{\alpha}(\nu) \lesssim R^{\alpha-n-1}. \end{equation}
which is now strictly larger than
\begin{equation} \label{muvalue} \nu(\mathbb{R}^{n+1}) \sim R^{-\kappa(n+1-m) - \frac{m}{2}}. \end{equation}
Following the rest of the proof above,  
\[ R^{\beta(\alpha, \Gamma^n)} \lessapprox \mathcal{H}^n(\Omega)^{-1} \nu(\mathbb{R}^n)^{-1} c_{\alpha}(\nu). \]
By substituting \eqref{cvalue} and \eqref{muvalue}, and the value $\mathcal{H}^n(\Omega) \gtrapprox R^{\kappa(n-m-1)-n + \frac{m}{2}}$ from \eqref{omegameasure}, and letting $R \to \infty$, 
\[ \beta(\alpha, \Gamma^n) \leq \alpha-1 + 2\kappa. \]
Substituting the value of $\kappa$ from \eqref{kappa2choice} gives
\begin{equation} \label{casetwo} \beta(\alpha, \Gamma^n) \leq \alpha-1 + \frac{n+1-\alpha}{n+1-m}.  \end{equation}
This is an increasing function of $m$, so the optimal choice is to take $m$ as small as possible subject to the restriction $\alpha \geq n+1-m$, or $m \geq n+1-\alpha$.  Therefore, the optimal choice is to take $m$ such that $\alpha \in [n+1-m, n+2-m)$. 

\subsection{Optimising both bounds}

If $(n+1)/2 < \alpha < n$, choose a unique integer $m$ such that $n-m \leq \alpha < n+1-m$. Since $\alpha < n$ it follows that $m \geq 1$ from the lower bound, and since $\alpha > (n+1)/2$ it follows that $m < (n+1)/2$ from the upper bound. If $m < (n-1)/2$ then by \eqref{caseone} and \eqref{casetwo},
\[ \beta(\alpha, \Gamma^n) \leq \min\left\{\alpha  - \frac{2\alpha -n - 1}{n+1-m}, \alpha-1 + \frac{n+1-\alpha}{n-m} \right\}.\]
The two terms in the minimum are equal when 
\[ \alpha = n+1-m - \frac{m}{n-m-1}. \]
The first bound is better for larger $\alpha$, and the second is better for smaller $\alpha$.
Thus,
\[ \beta(\alpha, \Gamma^n) \leq \begin{cases} \alpha-1 + \frac{n+1-\alpha}{n-m} & n-m \leq \alpha \leq n+1-m - \frac{m}{n-m-1} \\
\alpha  - \frac{2\alpha -n - 1}{n+1-m} & n+1-m - \frac{m}{n-m-1} \leq \alpha < n+1-m. \end{cases}  \]
Finally, if $(n-1)/2 \leq m < (n+1)/2$, then either $n$ is odd and $m = (n-1)/2$, or $n$ is even and $m = n/2$. In either case only the first term in the minimum may be used:
\[ \beta(\alpha, \Gamma^n) \leq \alpha  - \frac{2\alpha -n - 1}{n+1-m}. \]
This finishes the proof.  \end{proof}

\bibliographystyle{plainurl}
\bibliography{divergence}

\end{document}